\newcommand{\bx}{\mbox{\boldmath{$x$}}}
\newcommand{\bz}{\mbox{\boldmath{$z$}}}
\newcommand{\btau}{\mbox{\boldmath{$\tau$}}}
\newcommand{\bzero}{\mbox{\boldmath{$0$}}}
\newcommand{\ba}{\mbox{\boldmath{$a$}}}
\newcommand{\fb}{\mbox{\boldmath{$f$}}}
\newcommand{\bu}{\mbox{\boldmath{$u$}}}
\newcommand{\bq}{\mbox{\boldmath{$q$}}}
\newcommand{\bU}{\mbox{\boldmath{$U$}}}
\newcommand{\bv}{\mbox{\boldmath{$v$}}}
\newcommand{\bV}{\mbox{\boldmath{$V$}}}
\newcommand{\bw}{\mbox{\boldmath{$w$}}}
\newcommand{\bW}{\mbox{\boldmath{$W$}}}
\newcommand{\real}{\mbox{$\mathbb{R}$}}
\newcommand{\bvarepsilon}{\mbox{\boldmath{$\varepsilon$}}}
\newcommand{\bsigma}{\mbox{\boldmath{$\sigma$}}}
\newcommand{\bnu}{\mbox{\boldmath{$\nu$}}}
\newcommand{\cL}{\mbox{{${\cal L}$}}}
\newcommand{\weak}{\rightharpoonup}
\newtheorem{theorem}{Theorem}[section]
\newtheorem{lemma}[theorem]{Lemma}
\newtheorem{assumption}[theorem]{Assumption}
\newtheorem{definition}[theorem]{Definition}
\newtheorem{example}[theorem]{Example}
\newtheorem{problem}[theorem]{Problem}
\newtheorem{proposition}[theorem]{Proposition}
\newtheorem{remark}[theorem]{Remark}
\numberwithin{equation}{section}
\newenvironment{proof}[1][Proof]{\textbf{#1.} }
{\ \rule{0.75em}{0.75em}\smallskip}
\begin{document}

\begin{center}
\Large\bf Numerical analysis of variational-hemivariational inequalities with applications in contact mechanics
\end{center}

\medskip
\begin{center}
Weimin Han\footnote{Department of Mathematics, University of Iowa, Iowa City, IA 52242, USA; email: weimin-han@uiowa.edu.  The work of this author was partially supported by Simons Foundation Collaboration Grants (Grant No.\ 850737).}, \quad
Fang Feng\footnote{School of Mathematics and Statistics, Nanjing University of Science and Technology, Nanjing 210094, Jiangsu, China; email: ffeng@njust.edu.cn.   The work of this author was partially supported by the National Natural Science Foundation of China (Grant No.\ 12401528) and the Fundamental Research Funds for the Central Universities, No.\ 30924010837.}, \quad
Fei Wang\footnote{School of Mathematics and Statistics, Xi'an Jiaotong University,
Xi'an, Shaanxi 710049, China; email: feiwang.xjtu@xjtu.edu.cn.  The work of this author was partially
supported by the National Natural Science Foundation of China (Grant No.\ 12171383).},
\quad
Jianguo Huang\footnote{Corresponding author. School of Mathematical Sciences, and MOE-LSC, Shanghai Jiao Tong University, Shanghai 200240, China; email: jghuang@sjtu.edu.cn.  The work of this author was partially supported by the National Natural Science Foundation of China (Grant No.\ 12071289). }
\end{center}

\medskip
\begin{quote}
{\bf Abstract.}  Variational-hemivariational inequalities are an important mathematical framework for 
nonsmooth problems.  The framework can be used to study application problems from physical sciences and
engineering that involve non-smooth and even set-valued relations, monotone or non-monotone, among 
physical quantities. Since no analytic solution formulas are expected for variational-hemivariational inequalities
from applications, numerical methods are needed to solve the problems.  
This paper focuses on numerical analysis of variational-hemivariational inequalities, reporting new 
results as well as surveying some recent published results in the area.  A general convergence result 
is presented for Galerkin solutions of the inequalities under minimal solution regularity 
conditions available from the well-posedness theory, and C\'{e}a's inequalities are derived for error estimation of 
numerical solutions.  The finite element method and the virtual element method are taken as examples 
of numerical methods, optimal order error estimates for the linear element solutions are derived when the methods
are applied to solve three representative contact problems under certain solution regularity assumptions.
Numerical results are presented to show the performance of both the finite element method and the 
virtual element method, including numerical convergence orders of the numerical solutions that 
match the theoretical predictions.  
\end{quote}

\medskip
\section{Introduction}

Variational-hemivariational inequalities (VHIs) are an important mathematical framework for studying
nonsmooth problems in applications. This framework contains variational inequalities (VIs) and
hemivariational inequalities (HVIs) as special cases.  VIs are inequality
problems in which non-smooth terms have a convex property, whereas HVIs are those
in which non-smooth terms are allowed to be non-convex. A VHI has the features
of both a VI and a HVI, i.e., both convex and non-convex non-smooth terms
are present in the problem.  In the literature, the two terms ``hemivariational inequalities'' and
``variational-hemivariational inequalities'' are used interchangeably.  In this paper, we use the term
``variational-hemivariational inequalities'' to also mean hemivariational inequalities.

Rigorous mathematical analysis on VIs began in 1960s (\cite{Fi64}).
By early 1970s, foundations of basic mathematical theory of VIs were established
in a series of papers \cite{Br72, HS66, LS67, St64}.  In \cite{DL1976}, many complicated application
problems were modeled and studied as VIs. Since one does not expect to have
solution formulas for VIs arising in applications, numerical methods are needed
to solve VIs.  Early comprehensive references on numerical methods for solving
VIs are \cite{Gl1984, GLT1981}.  Modelling, analysis and numerical solution of
VIs in mechanics are treated in \cite{HR2013, HS2002, HHN96, HHNL1988, KO1988}
and in many other references. Even though the area of VIs is now pretty mature,
it is still an active research area due to emerging new applications and the need of developing better
numerical methods and algorithms for solving VIs (e.g., \cite{CHY23, CHR2023, JA2023, UL2011, Yo21}).

HVIs, and more generally, VHIs, find their applications in problems involving non-smooth, non-monotone
and set-valued relations among physical quantities.  Since the pioneering work of Panagiotopoulos in
early 1980s (\cite{Pa83}), there has been extensive research on modeling, analysis, numerical solution
and applications of VHIs. Recent years have witnessed an explosive growth in the literature on modeling,
analysis, numerical approximation and simulations, and applications of VHIs.  Early comprehensive references
on mathematical theory, numerical solution and applications of VHIs include
\cite{HMP1999, MP1999, NP1995, Pa1993}.  More recent monographs covering the mathematical theory
and applications of VHIs include \cite{CL2021, CLM2007, GM2003, GMDR2003, MOS2013, SM2018}.
In these references and in the vast majority of other publications on well-posedness of VHIs,
abstract surjectivity results on pseudomonotone operators and a Banach fixed-point argument are
applied to show the solution existence. An alternative and more accessible approach, without
the use of abstract theory of pseudomonotone operators, has been developed for the mathematical theory
of VHIs.  This new approach starts with minimization principles for a special family of VHIs,
first established in \cite{Han20}; the theory is then extended in \cite{Han21} to cover general VHIs
through fixed-point arguments.  The book \cite{Han2024} is devoted to the mathematical theory of
VHIs using the new approach.

VHIs are more complicated than VIs, and numerical methods are needed to solve them.
The finite element method and a variety of solution algorithms are discussed in \cite{HMP1999}
to solve HVIs. An optimal order error estimate is first presented in \cite{HMS14} for the linear
finite element solutions of a VHI.  This is followed by a series of papers on further analysis of the
finite element method to solve VHIs, e.g., \cite{Han18, HSB17, HSD18, HZ19}.  The reference \cite{HS19AN}
provides a recent survey of numerical analysis of VHIs, including some time-dependent problems.
Other numerical methods have been studied for solving VHIs, e.g.,
\cite{FHH19, FHH21a, FHH21b, FHH22, LWH20, WWH21, WWH22, XL23a, XL23b, XL23c}
on the use of virtual element methods, and \cite{WSW23} on the use of discontinuous Galerkin methods.
Machine learning techniques have been explored recently to solve the problems, cf.\ \cite{CSWL23, HWW22}.

The aim of this paper is to provide a summary account on the numerical solution of VHIs.
We will only consider stationary/time-independent problems.
In Section \ref{sec:pre}, we present preliminary materials needed later in the paper.  In particular, we
review the notions of generalized subdifferentials and generalized subgradients, and their properties.
In Section \ref{sec:sample}, we introduce three contact problems; their weak formulations are VHIs.
In Section \ref{sec:abs}, we provide an analysis of the Galerkin method for an abstract VHI,
which contains the three VHIs introduced in Section \ref{sec:sample} as special cases.
In Section \ref{sec:contact}, we apply results presented in Section \ref{sec:abs} to
study the three contact problems, including optimal order error estimates for their numerical
solutions using the linear finite element method  under certain solution regularity assumptions.  
In Section \ref{sec:VEM}, we analyze the virtual element method for solving an abstract VHI. 
In Section \ref{VEM:contact}, we apply the VEM to solve the three contact problems and derive 
optimal order error estimates under certain solution regularity assumptions.  
In Section \ref{sec:ex}, we present numerical examples for solving the contact problems, 
and report the numerical convergence orders of the FEM and VEM solutions.  The paper ends with 
some concluding remarks in Section \ref{sec:final}.

\section{Preliminaries}\label{sec:pre}

In the study of VHIs, we need the notions of the generalized directional derivative and generalized subdifferential
for locally Lipschitz continuous functions introduced by F. H. Clarke (\cite{Cl75, Cl1983}). 
In this section, we use the symbol $V$ for a Banach space, and $U$ for an open subset in $V$.

\begin{definition}\label{def:Clarke}
Assume $\Psi\colon U\to \real$ is locally Lipschitz continuous. Then the generalized {\rm (\emph{Clarke})} 
directional derivative of $\Psi$ at $u\in U$ in the direction $v \in V$ is defined by
\begin{equation}
\Psi^0(u; v) := \limsup_{w\to u,\,\lambda \downarrow 0}\frac{\Psi(w+\lambda v) -\Psi(w)}{\lambda}, 
\label{Clarke1}
\end{equation}
and the generalized {\rm (\emph{Clarke})} subdifferential of $\Psi$ at $u\in U$ is defined by
\begin{equation}
\partial\Psi(u) := \left\{u^*\in V^*\mid\Psi^{0}(u;v)\ge\langle u^*,v\rangle\ \forall\,v\in V\right\}.  
\label{Clarke2}
\end{equation}
\end{definition}

We note that the upper limit in \eqref{Clarke1} is well-defined for a locally Lipschitz continuous functional $\Psi$.  
Often, we will use Definition \ref{def:Clarke} for the particular case $U=V$.  

Basic properties of the generalized directional derivative and the generalized gradient are recorded in the 
next result (cf.\ \cite{Cl1983, CLSW1998, GP2005} or \cite[Section 3.2]{MOS2013}).

\begin{proposition}\label{subdiff}
Assume that $\Psi \colon U\to \real$ is a locally Lipschitz function. Then the following statements are valid.

\vspace{0.1mm}
{\rm (i)}  $\Psi^0(u;\lambda\,v)= \lambda\, \Psi^0(u; v)$ $\forall\,\lambda\ge 0$, $u\in U$, $v\in V$.\\
\phantom{a}\qquad\ $\Psi^0(u; -v) = (-\Psi)^0(u; v)$ $\forall\,u\in U$, $v\in V$.

\vspace{0.1mm}
{\rm (ii)} $\Psi^0 (u; v_1 + v_2) \le \Psi^0(u; v_1)+\Psi^0(u; v_2)$ $\forall\,u\in U,\,v_1,v_2\in V$.

\vspace{0.1mm}
{\rm (iii)} $\Psi^0(u;v)=\max\left\{\langle u^*,v\rangle\mid u^*\in\partial\Psi(u)\right\}$ $\forall\,u\in U,
\,v\in V$.

\vspace{0.1mm}
{\rm (iv)} If $u_n\to u$ in $V$, $u_n,u\in U$, and $v_n \to v$ in $V$, then 
\[ \limsup_{n\to\infty} \Psi^0(u_n;v_n) \le \Psi^0(u; v). \]

\vspace{0.1mm}
{\rm (v)} \label{4-P1,3}
For every $u\in U$, $\partial \Psi(u)$ is nonempty, convex, and weak\-ly$^{\,*}$ compact in $V^*$.

\vspace{0.1mm}
{\rm (vi)} If $u_n \to u$ in $V$, $u_n,u\in U$, $u^*_n \in \partial \Psi(u_n)$,
and $u^*_n \to u^*$ weakly$^{*}$ in $V^*$, then $u^*\in\partial\Psi(u)$.

\vspace{0.1mm}
{\rm (vii)} If $\Psi \colon U \to \real$ is convex, then the generalized subdifferential $\partial \Psi(u)$
at any $u \in U$ coincides with the convex subdifferential $\partial \Psi(u)$.
\end{proposition}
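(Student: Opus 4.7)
The plan is to work through the seven items sequentially, relying on the definition of $\Psi^0$ together with the local Lipschitz property of $\Psi$. Items (i) and (ii) are bookkeeping on the limsup. Positive homogeneity follows by rescaling $\lambda$ to $\lambda/\alpha$ (for $\alpha>0$); the identity $\Psi^0(u;-v)=(-\Psi)^0(u;v)$ follows from the substitution $w'=w+\lambda(-v)$, which still tends to $u$ as $(w,\lambda)\to(u,0^+)$; and subadditivity comes from splitting
\[
\Psi(w+\lambda(v_1+v_2))-\Psi(w) = \bigl[\Psi(w+\lambda v_1+\lambda v_2)-\Psi(w+\lambda v_1)\bigr] + \bigl[\Psi(w+\lambda v_1)-\Psi(w)\bigr],
\]
noting that $w+\lambda v_1\to u$ so the first bracketed term contributes at most $\Psi^0(u;v_2)$ in the limsup.

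For (iii), combining (i) and (ii) shows $v\mapsto\Psi^0(u;v)$ is sublinear on $V$, and the local Lipschitz property of $\Psi$ (with constant $L$ near $u$) yields $|\Psi^0(u;v)|\le L\|v\|$, so this sublinear functional is continuous. The inequality $\Psi^0(u;v)\ge\langle u^*,v\rangle$ for $u^*\in\partial\Psi(u)$ is immediate from the definition; the reverse is obtained by fixing $v_0\in V$, defining the linear functional $tv_0\mapsto t\Psi^0(u;v_0)$ on $\mathbb{R}v_0$, and extending it by Hahn--Banach to an element $u^*\in V^*$ dominated by $\Psi^0(u;\cdot)$. This same construction simultaneously establishes nonemptiness of $\partial\Psi(u)$ in (v); convexity of $\partial\Psi(u)$ is clear from the definition, and the bound $\langle u^*,v\rangle\le L\|v\|$ together with weak-$*$ closedness of the defining inequality yields weak-$*$ compactness via the Banach--Alaoglu theorem.

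Items (iv) and (vi) are upper semicontinuity assertions. For (iv), approximate $\Psi^0(u_n;v_n)$ from within by pairs $(w_n,\lambda_n)$ with $w_n$ near $u_n$ and $\lambda_n$ small; the local Lipschitz bound
\[
\bigl|\Psi(w_n+\lambda_n v_n)-\Psi(w_n+\lambda_n v)\bigr|\le L\lambda_n\|v_n-v\|
\]
lets us replace $v_n$ by $v$ at vanishing cost, while the resulting $(w_n,\lambda_n)$ still satisfy $w_n\to u$ and $\lambda_n\downarrow 0$, so the limsup is bounded by $\Psi^0(u;v)$. For (vi), pass to the limit in $\langle u_n^*,v\rangle\le\Psi^0(u_n;v)$: the left side converges by weak-$*$ convergence, and the right side is handled by (iv).

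Finally, for (vii), when $\Psi$ is convex one shows $\Psi^0(u;v)$ coincides with the classical one-sided directional derivative $\Psi'(u;v)=\lim_{\lambda\downarrow 0}[\Psi(u+\lambda v)-\Psi(u)]/\lambda$, using that for convex $\Psi$ the difference quotient is monotone in $\lambda$ and that $\Psi'(\cdot;v)$ is upper semicontinuous in its base point. Since the classical convex subdifferential is characterized by $\langle u^*,v\rangle\le\Psi'(u;v)$ for all $v$, the two subdifferentials agree. The main obstacle throughout is in (iii) and (v): one must verify that $\Psi^0(u;\cdot)$ is not merely sublinear but \emph{continuous} on $V$, which is precisely what the local Lipschitz hypothesis on $\Psi$ provides, so that Hahn--Banach delivers a genuine element of the topological dual $V^*$.
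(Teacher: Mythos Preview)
Your proof is correct and follows the standard arguments found in the references the paper cites (Clarke~\cite{Cl1983}, \cite{CLSW1998}, \cite{GP2005}, \cite[Section~3.2]{MOS2013}). Note, however, that the paper itself does \emph{not} prove this proposition: it is stated as a collection of known facts with pointers to the literature, so there is no ``paper's own proof'' to compare against beyond those external sources. Your sketch matches that standard development---positive homogeneity and subadditivity by direct manipulation of the $\limsup$, the Hahn--Banach argument for (iii) and nonemptiness in (v), Banach--Alaoglu for weak-$*$ compactness, the diagonal/Lipschitz-swap argument for (iv), and the monotonicity of convex difference quotients for (vii). One minor remark on (vii): what you actually use is the monotonicity of $\lambda\mapsto[\Psi(w+\lambda v)-\Psi(w)]/\lambda$ together with continuity of $\Psi$ at the fixed scale $\lambda_0$, which then \emph{yields} upper semicontinuity of $\Psi'(\cdot;v)$ rather than assuming it; your phrasing slightly inverts cause and effect, but the content is right.
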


Because of Proposition \ref{subdiff}\,(vii), the symbol $\partial$ is used for both the generalized 
subdifferential of locally Lipschitz continuous functions and the convex subdifferential of convex functions.
Detailed discussion on convex subdifferentials can be found in many references on convex functions, e.g., \cite{ET1976}.

One simple consequence of Proposition \ref{subdiff}\,(ii) is
\begin{equation}
\Psi^0(u;-v)\ge -\Psi^0(u;v)\quad\forall\,u\in U,\ v\in V.
\label{2.8a}
\end{equation}
This property can also be proved directly from the definition of the generalized directional directive.

In the description of the next result, we need the concept of regularity of a locally Lipschitz 
continuous function.

\begin{definition}\label{def:2.33c}\index{function!regular}\index{regular function}
A function $\Psi\colon U\to\mathbb{R}$ is regular at $u\in U$ if $\Psi$ is Lipschitz continuous near $u$ 
and the directional derivative $\Psi^\prime(u;v)$ exists such that 
\[ \Psi^\prime(u;v) = \Psi^0(u;v)\quad\forall\,v\in V.\]
\end{definition}

It is known that a function is regular at any point where the function is continuously differentiable.
In addition, a l.s.c.\ function is regular at any point in the interior of its effective domain.

\begin{proposition}\label{B.Pr}
Let $\Psi,\Psi_1,\Psi_2 \colon U\to \real$ be locally Lipschitz functions. Then:
\begin{list}{}{
\setlength{\leftmargin}{1.0cm}
\setlength{\rightmargin}{0.0in}
\setlength{\labelwidth}{0.05cm}
}

\item[{\rm (i)}] {\rm (scalar multiples)} 
\begin{equation}\label{B.PART10}
\partial(\lambda\,\Psi)(u)=\lambda\,\partial \Psi(u)\quad \forall\,\lambda \in \real, \ u\in U.
\end{equation}

\item[{\rm (ii)}] {\rm (sum rules)} 
\begin{equation}\label{B.PART11}
\partial (\Psi_1 + \Psi_2) (u) \subset \partial \Psi_1 (u) + \partial \Psi_2 (u)\quad\forall\,u\in U,
\end{equation}
or equivalently,
\begin{equation}\label{B.PART12}
(\Psi_1 + \Psi_2)^0(u; v) \le \Psi_1^0(u; v) + \Psi_2^0(u; v)\quad\forall\,u\in U,\, v\in V.
\end{equation}
If $\Psi_1$ and $\Psi_2$ are regular at $u$, then \eqref{B.PART11} and \eqref{B.PART12} hold with equalities.
\end{list}
\end{proposition}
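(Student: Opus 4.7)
The plan is to reduce both assertions to manipulations of the defining limsup in \eqref{Clarke1} combined with the support-function characterization from Proposition \ref{subdiff}\,(iii). I would treat (i) and (ii) in sequence, and handle the regularity case at the end.

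For (i), I would first handle $\lambda \ge 0$ by pulling the scalar outside the limsup in \eqref{Clarke1}, yielding $(\lambda\Psi)^0(u;v)=\lambda\,\Psi^0(u;v)$, from which the set identity $\partial(\lambda\Psi)(u)=\lambda\,\partial\Psi(u)$ follows immediately by reading off \eqref{Clarke2}. For $\lambda<0$, I would write $\lambda\Psi = |\lambda|(-\Psi)$, apply the nonnegative-scalar case with $|\lambda|$, and invoke the identity $(-\Psi)^0(u;v)=\Psi^0(u;-v)$ from Proposition \ref{subdiff}\,(i). Substituting $v\mapsto -v$ in the defining inequality of $\partial(\lambda\Psi)(u)$ then matches it with $-|\lambda|\,\partial\Psi(u) = \lambda\,\partial\Psi(u)$.

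For (ii), the pointwise inequality \eqref{B.PART12} is the easier half: it follows at once from subadditivity of $\limsup$ applied to the difference quotient of $\Psi_1+\Psi_2$. The equivalence between \eqref{B.PART11} and \eqref{B.PART12} is where I would be most careful. By Proposition \ref{subdiff}\,(iii), the map $v\mapsto\Psi^0(u;v)$ is precisely the support function of the nonempty, convex, weak$^*$ compact set $\partial\Psi(u)$ (Proposition \ref{subdiff}\,(v)). Since $\partial\Psi_1(u)+\partial\Psi_2(u)$ is again convex and weak$^*$ compact, its support function equals $\Psi_1^0(u;\cdot)+\Psi_2^0(u;\cdot)$, and a standard Hahn--Banach separation argument transfers the pointwise inequality of support functions to the set inclusion, and back. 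The direct inclusion $\partial\Psi_1(u)+\partial\Psi_2(u)\subset\partial(\Psi_1+\Psi_2)(u)$ under \eqref{B.PART12} can, alternatively, be read off coordinate-wise from \eqref{Clarke2}, which is the route I would actually take to avoid invoking separation.

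Finally, for the regularity statement, I would chain the bounds
\[
\Psi_1^0(u;v)+\Psi_2^0(u;v)=\Psi_1'(u;v)+\Psi_2'(u;v)=(\Psi_1+\Psi_2)'(u;v)\le (\Psi_1+\Psi_2)^0(u;v)\le \Psi_1^0(u;v)+\Psi_2^0(u;v),
\]
where the first equality uses the assumed regularity (Definition \ref{def:2.33c}), the second uses linearity of the ordinary limit defining the one-sided derivative, and the last inequality is the one already established in \eqref{B.PART12}. This forces equality throughout, and hence equality of the two weak$^*$ compact convex sets in \eqref{B.PART11} by the same support-function correspondence used above. The main obstacle, I expect, is not any individual calculation but keeping the inclusion/equality transfer between \eqref{B.PART11} and \eqref{B.PART12} fully rigorous; it is precisely at this transfer that the topological properties of $\partial\Psi(u)$ recorded in Proposition \ref{subdiff}\,(v) do the real work.
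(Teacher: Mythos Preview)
The paper does not supply its own proof of this proposition; it is stated as a standard result from nonsmooth analysis, with the detailed arguments deferred to the references on the Clarke calculus cited at the end of Section~\ref{sec:pre}. So there is no paper proof to compare against, and your outline is essentially the textbook route (Clarke's original arguments).

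Your treatment of (i), of the subadditivity inequality \eqref{B.PART12}, and of the regularity chain is correct. There is, however, one genuine slip in your discussion of the equivalence between \eqref{B.PART11} and \eqref{B.PART12}. You write that ``the direct inclusion $\partial\Psi_1(u)+\partial\Psi_2(u)\subset\partial(\Psi_1+\Psi_2)(u)$ under \eqref{B.PART12} can, alternatively, be read off coordinate-wise from \eqref{Clarke2}.'' This inclusion is the \emph{reverse} of \eqref{B.PART11}, and it is false in general (it would force equality for every pair of locally Lipschitz functions). What \eqref{B.PART12} gives you, via your support-function remark, is precisely \eqref{B.PART11}, namely $\partial(\Psi_1+\Psi_2)(u)\subset\partial\Psi_1(u)+\partial\Psi_2(u)$, and that direction genuinely needs the Hahn--Banach/separation step you sketched just before: knowing only $\langle u^*,v\rangle\le \Psi_1^0(u;v)+\Psi_2^0(u;v)$ for all $v$ does not let you split $u^*$ into a sum ``coordinate-wise.'' The easy direction of the equivalence is the converse, $\eqref{B.PART11}\Rightarrow\eqref{B.PART12}$, which follows immediately from Proposition~\ref{subdiff}\,(iii). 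Fix that reversed inclusion and drop the proposed shortcut, and the argument is complete.
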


In the study of VHIs, we will assume a condition of the form
\begin{equation}
\Psi^0(v_1;v_2-v_1) + \Psi^0(v_2;v_1-v_2) \le \alpha_\Psi \|v_1-v_2\|_V^2 \quad\forall\,v_1,v_2\in U
\label{2.12a}
\end{equation}
for a constant $\alpha_\Psi\ge 0$.  This condition characterizes the degree of non-convexity of 
the functional $\Psi$: the smaller the constant $\alpha_\Psi\ge 0$, the weaker the non-convexity of $\Psi$.
For a convex functional $\Psi$, \eqref{2.12a} holds with $\alpha_\Psi=0$.  The condition \eqref{2.12a}
is sometimes given as a condition on the generalized subdifferential (cf.\ \cite[p.\ 124]{SM2018}).

\begin{proposition}\label{prop:2.18a}
The condition \eqref{2.12a} is equivalent to 
\begin{equation}
\langle v^*_1-v^*_2,v_1-v_2\rangle \ge -\alpha_\Psi \|v_1-v_2\|_V^2 \quad\forall\,v_i\in U,
\,v^*_i\in \partial\Psi(v_i),\, i=1,2.
\label{2.12b}
\end{equation}
\end{proposition}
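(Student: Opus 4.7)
The plan is to prove both implications by exploiting the max-representation of the generalized directional derivative given in Proposition \ref{subdiff}\,(iii), namely $\Psi^0(u;v)=\max\{\langle u^*,v\rangle\mid u^*\in\partial\Psi(u)\}$, which links the two objects appearing in \eqref{2.12a} and \eqref{2.12b}.

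First I would establish \eqref{2.12a} $\Rightarrow$ \eqref{2.12b}. Fix $v_1,v_2\in U$ and arbitrary $v^*_i\in\partial\Psi(v_i)$. The definition \eqref{Clarke2} of the generalized subdifferential gives the two inequalities $\langle v^*_1,v_2-v_1\rangle\le\Psi^0(v_1;v_2-v_1)$ and $\langle v^*_2,v_1-v_2\rangle\le\Psi^0(v_2;v_1-v_2)$. Adding these and invoking \eqref{2.12a} yields $\langle v^*_1-v^*_2,v_2-v_1\rangle\le\alpha_\Psi\|v_1-v_2\|_V^2$, which after multiplying by $-1$ is exactly \eqref{2.12b}.

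For the reverse implication \eqref{2.12b} $\Rightarrow$ \eqref{2.12a}, the idea is that the generalized directional derivative is attained as a maximum. By Proposition \ref{subdiff}\,(v), $\partial\Psi(v_1)$ and $\partial\Psi(v_2)$ are nonempty and weakly$^*$ compact, so by Proposition \ref{subdiff}\,(iii) there exist $v^*_1\in\partial\Psi(v_1)$ and $v^*_2\in\partial\Psi(v_2)$ realizing
\[ \Psi^0(v_1;v_2-v_1)=\langle v^*_1,v_2-v_1\rangle,\qquad \Psi^0(v_2;v_1-v_2)=\langle v^*_2,v_1-v_2\rangle. \]
Summing and rewriting the right-hand side as $-\langle v^*_1-v^*_2,v_1-v_2\rangle$, assumption \eqref{2.12b} applied to these particular selections of subgradients immediately gives the desired bound $\alpha_\Psi\|v_1-v_2\|_V^2$.

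There is no real obstacle here; both directions are a direct consequence of the characterization of $\Psi^0$ as the support function of $\partial\Psi$. The only point to be careful about is, in the second direction, to note explicitly that the supremum defining $\Psi^0$ is actually attained (so that we may select concrete subgradients to which \eqref{2.12b} can be applied), and this is precisely what Proposition \ref{subdiff}\,(iii) together with (v) provides.
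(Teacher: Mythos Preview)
Your argument is correct in both directions. Note, however, that the paper states Proposition~\ref{prop:2.18a} without proof; there is nothing to compare your approach against. The route you take---using the defining inequality of the Clarke subdifferential for one implication and the attainment of the maximum in Proposition~\ref{subdiff}\,(iii) for the other---is the standard and natural one for this equivalence.
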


The condition \eqref{2.12b} is known as a relaxed monotonicity condition in the literature. 
\index{relaxed monotonicity condition} The inequality \eqref{2.12b} with $\alpha_\Psi=0$ is the monotonicity of 
$\partial\Psi$ for a convex functional $\Psi$.

For convenience, we will write \eqref{2.12b} as 
\begin{equation}
\langle \partial\Psi(v_1)-\partial\Psi(v_2),v_1-v_2\rangle\ge -\alpha_\Psi\|v_1-v_2\|_V^2 \quad\forall\,v_1,v_2\in U.
\label{2.12c}
\end{equation}

The following result is useful for verification of the condition \eqref{2.12a}; 
it is proved, e.g., in \cite[p.\ 26]{Han2024}.

\begin{theorem}\label{non_convex}
Assume $\Psi\colon U\to\mathbb{R}$ is locally Lipschitz continuous and $\alpha_\Psi\in\mathbb{R}$.  
Then \eqref{2.12a} holds if and only if the functional $v\mapsto \Psi(v)+(\alpha_\Psi/2)\,\|v\|_V^2$ is convex on $U$.
\end{theorem}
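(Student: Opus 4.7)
The plan is to introduce the auxiliary functional $\Phi(v) := \Psi(v) + (\alpha_\Psi/2)\|v\|_V^2$ and reduce the claimed equivalence to the classical fact that, for a locally Lipschitz function, convexity is equivalent to monotonicity of the Clarke subdifferential. Write $\phi(v) := (\alpha_\Psi/2)\|v\|_V^2$; in the Hilbert-space setting of the VHI applications, $\phi$ is $C^\infty$ with derivative $\alpha_\Psi u$ (after Riesz identification), so inserting $\Phi = \Psi + \phi$ into \eqref{Clarke1} and pulling the $\phi$-difference quotient, which is a genuine limit, out of the $\limsup$ yields the pointwise identity
\[\Phi^0(u;v) = \Psi^0(u;v) + \alpha_\Psi (u,v)_V,\]
where $(\cdot,\cdot)_V$ is the inner product on $V$.

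Specializing to $(u,v) = (v_1, v_2 - v_1)$ and $(u,v) = (v_2, v_1 - v_2)$ and adding, the bilinear cross terms $\alpha_\Psi(v_1, v_2 - v_1)_V + \alpha_\Psi(v_2, v_1 - v_2)_V$ collapse to $-\alpha_\Psi \|v_1 - v_2\|_V^2$. Consequently \eqref{2.12a} is equivalent to
\[\Phi^0(v_1; v_2 - v_1) + \Phi^0(v_2; v_1 - v_2) \le 0,\]
which by Proposition \ref{prop:2.18a} applied to $\Phi$ with parameter $0$ is in turn equivalent to the ordinary monotonicity of $\partial \Phi$: $\langle v_1^* - v_2^*, v_1 - v_2\rangle \ge 0$ for all $v_i^* \in \partial \Phi(v_i)$, $i = 1, 2$.

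It thus remains to prove, for locally Lipschitz $\Phi$, that monotonicity of $\partial\Phi$ is equivalent to convexity of $\Phi$. The forward direction is immediate from Proposition \ref{subdiff}\,(vii): for a convex locally Lipschitz function, the Clarke subdifferential coincides with the convex subdifferential, which is monotone. The converse is the main obstacle, since it is not covered by any result stated in the excerpt; I would handle it through Lebourg's mean value theorem from Clarke's calculus. Given $v_1, v_2 \in U$ and $\lambda \in (0,1)$, set $v_\lambda := \lambda v_1 + (1-\lambda) v_2$, and on the segments $[v_1, v_\lambda]$ and $[v_\lambda, v_2]$ pick $z_i$ together with $z_i^* \in \partial \Phi(z_i)$ satisfying the mean-value identities $\Phi(v_\lambda) - \Phi(v_1) = (1-\lambda)\langle z_1^*, v_2 - v_1\rangle$ and $\Phi(v_2) - \Phi(v_\lambda) = \lambda\langle z_2^*, v_2 - v_1\rangle$. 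Since $z_2 - z_1$ is a positive multiple of $v_2 - v_1$, monotonicity gives $\langle z_2^* - z_1^*, v_2 - v_1\rangle \ge 0$, and then
\[\lambda\,\Phi(v_1) + (1-\lambda)\Phi(v_2) - \Phi(v_\lambda) = \lambda(1-\lambda)\langle z_2^* - z_1^*, v_2 - v_1\rangle \ge 0\]
is exactly the convexity inequality.

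A secondary technical point concerns the sum-rule identity for $\Phi^0$ used above: Proposition \ref{B.Pr}\,(ii) stipulates equality only when both summands are regular, but the classical refinement of Clarke's calculus — equality persists once one summand is strictly differentiable, which $\phi$ certainly is — supplies what is needed, and in any case the identity is visible directly from \eqref{Clarke1}. The substantive difficulty therefore sits entirely in the monotonicity-implies-convexity step and its reliance on Lebourg's theorem.
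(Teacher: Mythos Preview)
The paper does not supply its own proof of this theorem; it merely cites \cite[p.~26]{Han2024}. So there is no in-paper argument to compare against, and I can only assess your proposal on its own merits.

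In the Hilbert setting you invoke, your argument is correct. The identity $\Phi^0(u;v)=\Psi^0(u;v)+\alpha_\Psi(u,v)_V$ follows from the strict differentiability of $\phi(v)=(\alpha_\Psi/2)\|v\|_V^2$ --- exactly the refinement of the sum rule you mention --- and the reduction of \eqref{2.12a} to monotonicity of $\partial\Phi$ via Proposition~\ref{prop:2.18a} is clean. The Lebourg mean-value step for monotone-implies-convex is also right: with $z_1\in(v_1,v_\lambda)$ and $z_2\in(v_\lambda,v_2)$ one indeed has $z_2-z_1=c(v_2-v_1)$ for some $c>0$, and the combination
\[
\lambda\Phi(v_1)+(1-\lambda)\Phi(v_2)-\Phi(v_\lambda)=\lambda(1-\lambda)\langle z_2^*-z_1^*,v_2-v_1\rangle\ge 0
\]
falls out as stated. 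One small proviso: Lebourg requires the segment $[v_1,v_2]\subset U$, so you only obtain convexity along segments lying in $U$ --- which is the natural reading of ``convex on $U$'' for $U$ open.

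The only real limitation is one you already flag: Section~\ref{sec:pre} states the theorem for $V$ a general Banach space, and there $\phi$ need not be differentiable (take $\|\cdot\|_1^2$ on $\mathbb{R}^2$), so your key identity $\Phi^0=\Psi^0+\phi'$ and hence the whole reduction are unavailable as written. Since every subsequent application in the paper assumes $V$ Hilbert (hypothesis $H(V)$), this is almost certainly the intended scope and your proof covers it; just be aware that the argument does not reach the full Banach-space generality of the statement.
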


The following chain rule is proved in \cite[Lemma 4.2]{MOS10}.   More general chain rules for the generalized 
directional derivative and generalized subdifferential can be found in \cite[Chapter 10]{Cl2013}.

\begin{proposition}\label{B.Af}
Let $V$ and $W$ be Banach spaces, let $\Psi_0 \colon W \to \real$ be locally Lipschitz and let
$T\colon V\to W$ be given by $Tv=Av+w$ for $v\in V$, where $A \in {\mathcal L}(V, W)$ and $w\in W$ is fixed.
Then the function $\Psi \colon V \to \real$ defined by $\Psi(v) = \Psi_0 (Tv)$ is locally Lipschitz and
\begin{align}
& \Psi^0 (u; v) \le \Psi_0^0 (Tu; Av)\quad\forall\,u, v \in V,\label{Clark1}\\
& \partial \Psi (u) \subseteq A^* \partial \Psi_0 (Tu)\quad\forall\,u\in V,\label{Clark2}
\end{align}
where $A^*\in {\mathcal L}(W^*,V^*)$ is the adjoint operator of $A$.  Moreover, the equalities 
in \eqref{Clark1} and \eqref{Clark2} hold true if $A$ is surjective.
\end{proposition}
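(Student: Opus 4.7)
The plan is to verify the four claims in order (local Lipschitz property, the inequality \eqref{Clark1}, the inclusion \eqref{Clark2}, and the sharpening to equality when $A$ is surjective), because each step naturally feeds the next. The local Lipschitz property of $\Psi$ near a point $u\in V$ follows immediately from the local Lipschitz property of $\Psi_0$ near $Tu\in W$: if $L$ is a Lipschitz constant for $\Psi_0$ in a neighborhood of $Tu$, then $|\Psi(v_1)-\Psi(v_2)|\le L\|A(v_1-v_2)\|_W\le L\|A\|\|v_1-v_2\|_V$ on the preimage of that neighborhood.

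For \eqref{Clark1}, I would work directly from the definition \eqref{Clarke1}. Substituting $\Psi(z+\lambda v)-\Psi(z)=\Psi_0(Tz+\lambda Av)-\Psi_0(Tz)$ into the $\limsup$ and noting that $z\to u$ in $V$ implies $Tz\to Tu$ in $W$ (continuity of $T$), the limit superior on the left is taken over a subset of the parameters appearing in the limit superior that defines $\Psi_0^0(Tu;Av)$. Therefore $\Psi^0(u;v)\le\Psi_0^0(Tu;Av)$. When $A$ is surjective, the open mapping theorem guarantees that $T$ maps every neighborhood of $u$ onto a neighborhood of $Tu$, so every admissible pair $(y,\lambda)$ in the definition of $\Psi_0^0(Tu;Av)$ arises as $(Tz,\lambda)$ for some admissible $z$; this upgrades the inequality to an equality.

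For \eqref{Clark2}, I would take $u^*\in\partial\Psi(u)$ and construct a corresponding $y^*\in\partial\Psi_0(Tu)$ with $u^*=A^*y^*$. First observe that $u^*$ annihilates $\ker A$: if $Av=0$, then \eqref{Clark1} gives $\pm\langle u^*,v\rangle\le\Psi_0^0(Tu;\pm Av)=0$. Hence the prescription $\widetilde y^*(Av):=\langle u^*,v\rangle$ defines a well-posed linear functional on the subspace $\mathrm{Range}(A)\subset W$, and by \eqref{Clark1} it is dominated there by the sublinear map $p(z):=\Psi_0^0(Tu;z)$. Since $p$ is positively homogeneous and subadditive (Proposition \ref{subdiff}(i)–(ii)) and bounded by a constant multiple of $\|\cdot\|_W$, the Hahn–Banach theorem extends $\widetilde y^*$ to a linear $y^*\colon W\to\mathbb R$ with $y^*(z)\le p(z)$ everywhere; boundedness of $p$ then yields $y^*\in W^*$. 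By construction, $y^*\in\partial\Psi_0(Tu)$ and $A^*y^*=u^*$. The main obstacle in the whole argument is this Hahn–Banach step: one has to recognize that $\Psi_0^0(Tu;\cdot)$ is a genuine sublinear majorant and that $u^*$ descends to $\mathrm{Range}(A)$, otherwise the extension either fails to exist or fails to land in $\partial\Psi_0(Tu)$.

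Finally, to obtain the reverse inclusion in the surjective case, take any $y^*\in\partial\Psi_0(Tu)$; for every $v\in V$ one has
\[
\langle A^*y^*,v\rangle_{V^*\times V}=\langle y^*,Av\rangle_{W^*\times W}\le\Psi_0^0(Tu;Av)=\Psi^0(u;v),
\]
where the last equality is the sharpened \eqref{Clark1} just established. This gives $A^*y^*\in\partial\Psi(u)$, completing the chain rule. The remaining steps are routine bookkeeping; the only substantive difficulties are the sublinear-domination argument supporting Hahn–Banach and the appeal to the open mapping theorem in the surjective case.
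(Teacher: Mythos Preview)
Your argument is correct in every step: the local Lipschitz property, the $\limsup$ comparison giving \eqref{Clark1}, the Hahn--Banach construction yielding \eqref{Clark2}, and the open mapping argument for the surjective case are all sound and standard. Note, however, that the paper does not supply its own proof of this proposition; it simply cites \cite[Lemma~4.2]{MOS10}, so there is no in-paper argument to compare against. Your proof is essentially the classical one found in that literature (see also \cite[Theorem~2.3.10]{Cl1983} for the finite-dimensional prototype), and it would serve perfectly well as a self-contained justification here.
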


For detailed discussion of the properties of the Clarke subdifferential,  we refer the reader
to \cite{Cl75, Cl1983, CLSW1998, Cl2013, DMP1, DMP2}.

\smallskip
In virtually all the applications in mechanics, the locally Lipschitz continuous functional $\Psi$ is expressed as an 
integral of a locally Lipschitz continuous function $\psi$ of a real variable or of several real variables. 
The following formula is useful to compute the Clarke subdifferential of a function defined over a 
finite dimensional set (cf.\ \cite[Theorem 10.7]{Cl2013}, \cite[Prop.\ 3.34]{MOS2013}).

\begin{proposition} \label{prop:finite-dim}
Assume $U\subset \mathbb{R}^d$ is open, $\psi:U\to\mathbb{R}$ is locally Lipschitz continuous near $\bx\in U$,
$N\subset\mathbb{R}^d$ with $|N|=0$, and $N_\psi\subset\mathbb{R}^d$ with
$|N_\psi|=0$ such that $\psi$ is Fr\'{e}chet differentiable on $U\backslash N_\psi$.  Then,
\[ \partial\psi(\bx)={\rm conv}\left\{\lim \psi^\prime(\bx_k)\mid \bx_k\to\bx,\
 \bx_k\not\in N\cup N_\psi\right\}. \]
\end{proposition}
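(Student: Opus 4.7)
The plan is to prove the two inclusions separately. First observe that the set
\[
A:=\mathrm{conv}\bigl\{\lim_{k\to\infty}\psi'(\bx_k)\mid \bx_k\to\bx,\ \bx_k\notin N\cup N_\psi\bigr\}
\]
is well-defined and compact: Rademacher's theorem combined with the hypothesis guarantees $\psi$ is differentiable off the null set $N_\psi$, so near $\bx$ there are plenty of admissible sequences; local Lipschitz continuity bounds $|\psi'(\bx_k)|$ by the local Lipschitz constant $L$, so limit points exist and the set of all such limits lies in the closed ball of radius $L$. Its convex hull in $\mathbb{R}^d$ is therefore compact.

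For the easy inclusion $A\subseteq\partial\psi(\bx)$: take $\xi=\lim\psi'(\bx_k)$ and fix $v\in\mathbb{R}^d$. For each $k$, since $\bx_k$ is a differentiability point, $\langle\psi'(\bx_k),v\rangle=\lim_{\lambda\downarrow 0}[\psi(\bx_k+\lambda v)-\psi(\bx_k)]/\lambda$, and this limit is bounded above by $\psi^0(\bx_k;v)+o(1)$, which in turn is bounded by $\psi^0(\bx;v)$ in the limit via upper semicontinuity of $\psi^0(\cdot;v)$ (Proposition \ref{subdiff}\,(iv)). Hence $\langle\xi,v\rangle\le\psi^0(\bx;v)$ for all $v$, so $\xi\in\partial\psi(\bx)$. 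Since $\partial\psi(\bx)$ is convex (Proposition \ref{subdiff}\,(v)), $A\subseteq\partial\psi(\bx)$.

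The reverse inclusion $\partial\psi(\bx)\subseteq A$ is the main obstacle. I would argue by contradiction: if some $\xi\in\partial\psi(\bx)\setminus A$ existed, then since $A$ is nonempty, closed, and convex, the Hahn--Banach separation theorem in $\mathbb{R}^d$ would furnish a direction $v$ with $\langle\xi,v\rangle>m:=\max_{a\in A}\langle a,v\rangle$. To contradict this, it suffices to prove the key estimate $\psi^0(\bx;v)\le m$, because $\xi\in\partial\psi(\bx)$ forces $\psi^0(\bx;v)\ge\langle\xi,v\rangle>m$.

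To establish this estimate (the hardest step), I would exploit Fubini's theorem and the fundamental theorem of calculus along lines in direction $v$. Given $\varepsilon>0$, by definition of $A$ and the local boundedness of $\psi'$, there is a neighborhood $B(\bx,\delta)$ on which every differentiability point $\by$ satisfies $\langle\psi'(\by),v\rangle\le m+\varepsilon$; otherwise one could extract a sequence of differentiability points converging to $\bx$ whose gradients yield a limit point outside the $\varepsilon$-slab $\{a:\langle a,v\rangle\le m+\varepsilon\}$, contradicting the definition of $A$. Now for $\bw$ close to $\bx$ and $\lambda$ small, Fubini applied to the null set $N\cup N_\psi$ in the tube $\{\bw+tv:\bw\in B,\,t\in[0,\lambda]\}$ shows that for almost all $\bw$, the segment $\bw+[0,\lambda]v$ meets $N\cup N_\psi$ in a $t$-null set. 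For such $\bw$, the Lipschitz function $t\mapsto\psi(\bw+tv)$ is absolutely continuous, so
\[
\frac{\psi(\bw+\lambda v)-\psi(\bw)}{\lambda}=\frac{1}{\lambda}\int_0^\lambda\langle\psi'(\bw+tv),v\rangle\,\ud t\le m+\varepsilon.
\]
By continuity of $\psi$, the same bound passes to all $\bw$ in a suitable neighborhood by a density argument, so taking $\limsup$ as $\bw\to\bx$ and $\lambda\downarrow 0$ yields $\psi^0(\bx;v)\le m+\varepsilon$. Letting $\varepsilon\downarrow 0$ gives $\psi^0(\bx;v)\le m$, completing the contradiction. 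The delicate points are the almost-everywhere applicability of the fundamental theorem of calculus along line segments and the uniform slab estimate for $\psi'$ near $\bx$; these are precisely where the hypothesis that $N\cup N_\psi$ is a Lebesgue null set in $\mathbb{R}^d$ is used.
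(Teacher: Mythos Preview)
Your argument is correct and is precisely the classical proof of this result (originally due to Clarke). However, the paper does not supply its own proof of this proposition: it is quoted as a known fact with references to \cite[Theorem 10.7]{Cl2013} and \cite[Prop.\ 3.34]{MOS2013}, so there is nothing in the paper to compare against. What you have written is essentially the proof one finds in those references: the easy inclusion via upper semicontinuity of $\psi^0(\cdot;v)$ and convexity of $\partial\psi(\bx)$, and the reverse inclusion via hyperplane separation in $\mathbb{R}^d$ combined with the Fubini/fundamental-theorem-of-calculus argument along lines in the separating direction.

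Two minor remarks on presentation. In the easy inclusion, the ``$+o(1)$'' is unnecessary: at a differentiability point $\bx_k$ one has $\langle\psi'(\bx_k),v\rangle=\psi'(\bx_k;v)\le\psi^0(\bx_k;v)$ directly. In the slab estimate, you should make explicit that the admissible sequence $(\by_k)$ you extract satisfies $\by_k\to\bx$ (which it does, since $\by_k\in B(\bx,1/k)$ in the contradiction argument), so that any subsequential limit of $(\psi'(\by_k))$ genuinely belongs to the generating set of $A$; this is implicit in what you wrote but worth stating. The passage from the almost-every-$\bw$ bound to every $\bw$ via continuity of $\psi$ is fine.
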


Next, we show some examples on the generalized subdifferential for locally Lipschitz continuous 
functions by applying Proposition \ref{prop:finite-dim}. 

For the function $\psi_1(x)=-|x|$, its generalized subdifferential is
\[ \partial\psi_1(x)=\left\{\begin{array}{ll} 1 & {\rm if}\ x<0,\\ \lbrack -1,1 \rbrack & {\rm if}\ x=0,\\
-1 & {\rm if}\ x>0.\end{array} \right. \]

For 
\[ \psi_2(x)=\left\{\begin{array}{ll} 0,& x\le 0,\\ x,& x>0, \end{array}\right. \]
we have
\[ \partial\psi_2(x)=\left\{\begin{array}{ll} 0,& x<0,\\ \lbrack 0,1\rbrack, & x=0,\\
1,& x>0. \end{array}\right. \]
Note that $\psi_2$ is a convex function, and $\partial\psi_2$ is also the convex subdifferential of $\psi_2$.

Consider
\[ 
\psi_3(x)=\left\{\begin{array}{ll} 2\,x+3 &\ {\rm if}\ x< -1,\\[1mm]
|x| &\ {\rm if}\ |x|\le 1,\\[1mm]
2\,x^2 -1 &\ {\rm if}\ x >1.
\end{array}\right.
\]
For its generalized subdifferential, we have
\[
\partial \psi_3(x)=\left\{\begin{array}{ll} 2&\ {\rm if}\ x< -1,\\[0.1mm]
[-1,2]&\ {\rm if}\ x=-1,\\[0.1mm]
-1 &\ {\rm if}\ -1<x<0,\\[0.1mm]
[-1,1] &\ {\rm if}\ x=0,\\[0.1mm]
1 &\ {\rm if}\ 0<x<1,\\[0.1mm]
[1,4]&\ {\rm if}\ x=1,\\[0.1mm]
4\,x &\ {\rm if}\ x> 1.
\end{array}\right.
\]

On several occasions, we will use the modified Cauchy-Schwarz inequality
\begin{equation}
a\,b\le \epsilon \,a^2+c\,b^2\quad \forall\,a,b\in\mathbb{R},
\label{mCS}
\end{equation}
where $\epsilon>0$ is an arbitrary positive number and the constant $c>0$ depends on
$\epsilon$, indeed, we may simply take $c=1/(4\,\epsilon)$.

\section{Sample problems from contact mechanics}\label{sec:sample}

\subsection{Notation}

We first introduce the notation.  We are interested in mathematical models which describe the equilibrium of
the mechanical state of an elastic body subject to the action of external forces and constraints on the boundary.
We denote by $\Omega$ the reference configuration of the body and assume $\Omega$ is an open, bounded,
connected set in $\real^d$ with a Lipschitz boundary $\Gamma=\partial\Omega$. In applications,
the dimension $d=2$ or $3$. The Lipschitz regularity assumption on $\Omega$ allows us to use most
of the basic properties of Sobolev spaces, including integration by parts formulas.  The unit outward normal
vector on $\Gamma$ exists a.e.\ and we denote it by $\bnu$.  We use boldface letters for vectors and tensors.
A typical point in $\mathbb{R}^d$ is denoted by $\bx=(x_i)$. The range of indices $i$, $j$, $k$, $l$
is between $1$ and $d$.  We adopt the summation convention over a repeated index, e.g., $a_i b_i$ stands for 
the summation $a_1 b_1 + \cdots + a_d b_d$.  The index following a comma indicates a partial derivative 
with respect to the corresponding component of the spatial variable $\bx$.  For example, for a function 
$g(\bx)$, $g_{,j}(\bx)$ denoted the partial derivative $\partial g(\bx)/\partial x_j$.

We denote by $\mathbb{S}^d$ the space of second order symmetric tensors
on $\mathbb{R}^d$.  For our purpose, we can simply view $\mathbb{S}^{d}$ as the space of symmetric matrices
of order $d$.  Over $\mathbb{R}^d$ and $\mathbb{S}^{d}$, we use the canonical inner products and norms defined by
\begin{align}
& \bu\cdot\bv=  u_i v_i, \quad |\bv|=(\bv\cdot\bv)^{1/2}\quad\forall\,\bu=(u_i),\bv=(v_i)\in\mathbb{R}^d,
\label{A1w}\\[0.5mm]
&\bsigma:\btau =\sigma_{ij}\tau_{ij},\quad |\btau|=(\btau:\btau)^{1/2}\quad\forall\,
\bsigma=(\sigma_{ij}),\btau=(\tau_{ij}) \in\mathbb{S}^{d}.  \label{A2w}
\end{align}

The primary unknown of the contact problem is the displacement of the elastic body,
$\bu\colon\overline{\Omega}\to \mathbb{R}^d$.  We consider the contact problems within the framework 
of the linearized strain theory.  The, for a displacement field $\bu$, we use the linearized strain tensor
\[ \bvarepsilon(\bu)=\frac 12\left(\nabla \bu+(\nabla \bu)^T\right). \]
In componentwise form,
\begin{equation*}
\varepsilon_{ij}(\bu)=(\bvarepsilon(\bu))_{ij} =\frac 12\, (u_{i,j} + u_{j,i}),\quad 1\le i,j\le d,
\end{equation*}
where $u_{i,j}=\partial u_i/\partial x_j$. In the description of the contact problems, another important 
mechanical quantity is the stress tensor $\bsigma\colon \Omega\to \mathbb{S}^d$.  Both $\bvarepsilon(\bu)$
and $\bsigma$ are symmetric matrix valued functions on $\Omega$.

We will use Sobolev and Lebesgue spaces on $\Omega$, $\Gamma$, or their subsets, such as $L^2(\Omega;\real^d)$,
$L^2(\Gamma_N;\real^d)$, $L^2(\Gamma_C;\real^d)$, $H^1(\Omega;\real^d)$, and $H^1(\Omega;\mathbb{S}^d)$,
endowed with their canonical inner products and associated norms.  For a function
$\bv\in H^1(\Omega;\real^d)$ we write $\bv$ for its trace $\gamma\bv\in L^2(\Gamma;\real^d)$ on $\Gamma$.
A standard reference on Sobolev spaces is \cite{AF2003}.  One may also consult \cite{Bre2011, Evans2010} and
many other books on Sobolev spaces.

To describe the contact problems, we split the boundary of $\Gamma$ into three non-overlapping measurable parts:
$\Gamma=\Gamma_D\cup\Gamma_N\cup\Gamma_C$.  We will specify a displacement boundary condition on $\Gamma_D$,
a traction boundary condition on $\Gamma_N$, and contact boundary conditions on $\Gamma_C$.
We assume $\Gamma_D$ and $\Gamma_C$ have positive measures, $|\Gamma_D|>0$, $|\Gamma_C|>0$.
The space for the unknown displacement field is
\begin{equation}
 \bV:=\left\{\bv\in H^1(\Omega;\mathbb{R}^d) \mid \bv=\bzero\ \mbox{on}\ \Gamma_D\right\}.
\label{SpV}
\end{equation}
For some contact problems, the displacement will be sought in a subspace or a subset of $\bV$.
The space for the stress field is
\begin{equation}
\mathbb{Q}:=L^2(\Omega;\mathbb{S}^d)=\left\{\bsigma=(\sigma_{ij})\mid\sigma_{ij}=\sigma_{ji}\in L^2(\Omega),
\ 1\le i,j\le d\right\}.  \label{SpQ}
\end{equation}
The space $\mathbb{Q}$ is a real Hilbert space endowed with the inner product
\[  (\bsigma,\btau )_\mathbb{Q}=\int_{\Omega}\bsigma\cdot\btau\,dx,\quad \bsigma,\btau\in \mathbb{Q}.\]
The corresponding norm is denoted by $\|\cdot\|_\mathbb{Q}$. Due to the assumption $|\Gamma_D|>0$, there is
a constant $c>0$, depending on $\Omega$ and $\Gamma_D$, such that
\begin{equation}
\|\bv\|_{H^1(\Omega;\mathbb{R}^d)}\le c\,\|\bvarepsilon(\bv)\|_\mathbb{Q}\quad\forall\,\bv\in \bV
\label{Korn}
\end{equation}
This is known as a Korn's inequality, and its proof can be found in numerous publications, 
e.g.\ \cite[p.\ 79]{NH1981}.  Consequently, $\bV$ is a real Hilbert space under the inner product
\begin{equation}
(\bu,\bv)_{\boldsymbol V}=(\bvarepsilon(\bu),\bvarepsilon(\bv))_\mathbb{Q}. \label{3.3}
\end{equation}
The induced norm is
\begin{equation}\label{3.3n}
\|\bv\|_{\boldsymbol V}=\|\bvarepsilon(\bv)\|_\mathbb{Q}.
\end{equation}
It follows from Korn's inequality \eqref{Korn} that $\|\cdot\|_{H^1(\Omega;\mathbb{R}^d)}$ and 
$\|\cdot\|_{\boldsymbol V}$ are equivalent norms on $\bV$. We will use $\|\cdot\|_{\boldsymbol V}$ as the norm on $\bV$.

Denote by $\bV^*$ the dual of the space $\bV$ and by $\langle\cdot,\cdot\rangle$ the corresponding
duality pairing. For any element $\bv\in \bV$, denote by $v_\nu$ and $\bv_\tau$ its normal and
tangential components on $\Gamma$ given by $v_\nu=\bv\cdot\bnu$ and $\bv_\tau=\bv-v_\nu\bnu$,
respectively. For a function $\bsigma:\overline\Omega\to\mathbb{S}^d$, we denote by
$\sigma_\nu$ and $\bsigma_\tau$ its normal and tangential components on $\Gamma$, defined by the relations
\[ \sigma_{\nu}=(\bsigma\bnu)\cdot\bnu, \quad \bsigma_{\tau} =\bsigma\bnu - \sigma_{\nu}\bnu.  \]
It is straightforward to show that
\begin{align}
\bu\cdot\bv & =u_\nu v_\nu+\bu_\tau\cdot\bv_\tau,
\label{1.42a}\\
\bsigma\bnu\cdot\bv & =\sigma_\nu v_\nu+\bsigma_\tau\cdot\bv_\tau.
\label{1.42b}
\end{align}

For a differentiable field $\bsigma\colon \Omega\to\mathbb{S}^d$, its divergence is a vector-valued function
${\rm div}\,{\bsigma}\colon \overline{\Omega}\to\real$ with components
\[ ({\rm div}\,{\bsigma})_i=\sigma_{ij,j},\quad 1\le i\le d.\]
For $\bsigma\in H^1(\overline{\Omega};\mathbb{S}^d)$ and $\bv\in H^1(\overline{\Omega};\real^d)$, we have Green's formula
\begin{equation}
\int_\Omega\,\bsigma:\bvarepsilon(\bv)\,dx+\int_\Omega\,{\rm div}\,\bsigma\cdot\bv\,dx
= \int_\Gamma\bsigma\bnu \cdot\bv\,ds. \label{Green}
\end{equation}
From the trace inequality
\begin{equation}\label{trace}
\|\bv\|_{L^2(\Gamma;\mathbb{R}^d)}\leq c_0 \|\bv\|_{\boldsymbol V}\quad \forall\, \bv\in \bV,
\end{equation}
we can derive similar trace inequalities for the normal component and tangential component:
\begin{align}
& \|v_\nu\|_{L^2(\Gamma_C)} \le \lambda_\nu^{-1/2} \|\bv\|_{\boldsymbol V}\quad\forall\,\bv\in \bV,
\label{tr_nu}\\
& \|\bv_\tau\|_{L^2(\Gamma_C;\mathbb{R}^d)}\le\lambda_\tau^{-1/2} \|\bv\|_{\boldsymbol V}\quad\forall\,\bv\in \bV,
\label{tr_tau}
\end{align}
where $\lambda_\nu>0$ and $\lambda_\tau>0$ are the smallest eigenvalues of the eigenvalue problems
\[ \bu\in \bV,\quad \int_\Omega \bvarepsilon({\bu}):\bvarepsilon(\bv)\,dx
=\lambda \int_{\Gamma_C} u_\nu v_\nu \, ds\quad\forall\,\bv\in \bV,\]
and
\[ \bu\in \bV,\quad \int_\Omega \bvarepsilon({\bu}):\bvarepsilon(\bv)\,dx
=\lambda \int_{\Gamma_C} \bu_\tau \cdot \bv_\tau \, ds\quad\forall\,\bv\in \bV,\]
respectively.

In the study of Problem \ref{prob:cont2} below, we need a subspace of the space $\bV$:
\begin{equation}
\bV_1=\left\{\bv\in\bV\mid v_\nu=0\ {\rm on}\ \Gamma_C\right\}.
\label{SpV1}
\end{equation}
We use the norm $\|\cdot\|_{\boldsymbol V}$ over the subspace $\bV_1$.  Similar to 
\eqref{tr_nu} and \eqref{tr_tau}, we have the trace inequalities
\begin{align}
& \|v_\nu\|_{L^2(\Gamma_C)} \le \lambda_{\nu,1}^{-1/2} \|\bv\|_{\boldsymbol V}\quad\forall\,\bv\in \bV_1,
\label{tr_nu1}\\
& \|\bv_\tau\|_{L^2(\Gamma_C;\mathbb{R}^d)}\le\lambda_{\tau,1}^{-1/2} \|\bv\|_{\boldsymbol V}\quad\forall\,\bv\in \bV_1,
\label{tr_tau1}
\end{align}
where $\lambda_{\nu,1}>0$ and $\lambda_{\tau,1}>0$ are the smallest eigenvalues of the eigenvalue problems
\[ \bu\in \bV_1,\quad \int_\Omega \bvarepsilon({\bu}):\bvarepsilon(\bv)\,dx
=\lambda \int_{\Gamma_C} u_\nu v_\nu \, ds\quad\forall\,\bv\in \bV_1,\]
and
\[ \bu\in \bV_1,\quad \int_\Omega \bvarepsilon({\bu}):\bvarepsilon(\bv)\,dx
=\lambda \int_{\Gamma_C} \bu_\tau \cdot \bv_\tau \, ds\quad\forall\,\bv\in \bV_1,\]
respectively.  We have $\lambda_{\nu,1}\ge \lambda_\nu$ and $\lambda_{\tau,1}\ge \lambda_\tau$.

\subsection{Three contact problems}

In this subsection, we present mathematical models of three representative contact problems between an elastic body
and a rigid foundation. A variety of mathematical models of contact problems can be found in many
publications, cf.\ e.g., the comprehensive references \cite{HS2002, KO1988, MOS2013}.
In all contact problems, we have the following pointwise relations:
\begin{align}
-\operatorname*{div}{\bsigma} & =\fb_0\quad\mathrm{in\ }\Omega, \label{cont1}\\
\bsigma & ={\cal E}\bvarepsilon(\bu)\quad\mathrm{in\ }\Omega, \label{cont2}\\
\bvarepsilon(\bu) & =\frac{1}{2}\left[\nabla\bu+(\nabla\bu)^T\right]\quad\mathrm{in\ }\Omega, \label{cont3}\\
\bu & =\mathbf{0}\quad\text{on}\ \Gamma_D, \label{cont4}\\
\bsigma{\bnu} & =\fb_2\quad\text{on}\ \Gamma_N. \label{cont5}
\end{align}
We comment that \eqref{cont1} is the equilibrium equation, \eqref{cont2} is the elastic constitutive law,
\eqref{cont3} defines the linearized strain tensor, \eqref{cont4} represents the homogeneous boundary condition
on $\Gamma_D$ whereas \eqref{cont5} describes the traction boundary conditions.

On the elasticity operator ${\cal E}\colon \Omega\times \mathbb{S}^d \to \mathbb{S}^d$ in the constitutive law \eqref{cont2},
we assume the following properties:
\begin{equation}
\left\{\begin{array}{ll}
{\rm (a)\  There\ exists\ a\ constant}\ L_{\cal E}>0\ {\rm such\ that\ a.e.\ in}\ \Omega,\\
{}\qquad |{\cal E}(\cdot,\bvarepsilon_1)-{\cal E}(\cdot,\bvarepsilon_2)|
\le L_{\cal E} |\bvarepsilon_1-\bvarepsilon_2| \quad\forall\, \bvarepsilon_1,\bvarepsilon_2\in \mathbb{S}^d; \\ [1mm]
{\rm (b)\  there\ exists\ a\ constant}\ m_{\cal E}>0\ {\rm such\ that\ a.e.\ in}\ \Omega,\\
{}\qquad ({\cal E}(\cdot,\bvarepsilon_1)-{\cal E}(\cdot,\bvarepsilon_2)):
(\bvarepsilon_1-\bvarepsilon_2)\ge m_{\cal E} |\bvarepsilon_1-\bvarepsilon_2|^2\\
{}\qquad\qquad \forall\, \bvarepsilon_1,\bvarepsilon_2 \in \mathbb{S}^d; \\ [1mm]
{\rm (c) } \ {\cal E}(\cdot,\bvarepsilon)\ {\rm is\ measurable\ on\ }\Omega
\ {\rm for\ all \ }\bvarepsilon\in \mathbb{S}^d;  \\ [1mm]
{\rm (d)}\ {\cal E}(\cdot,\bzero)=\bzero\  {\rm a.e.\ in\ } \Omega.
\end{array}\right.
\label{Ass:E}
\end{equation}
For the force densities, we assume
\begin{equation}
\fb_0\in L^2(\Omega;\mathbb{R}^d), \quad \fb_2\in L^2(\Gamma_N;\mathbb{R}^d)
\label{Ass:f}
\end{equation}

To complete the description of the contact problems, we need to specify contact conditions on $\Gamma_C$.
In the first contact problem, we use the normal compliance contact condition with Tresca's friction law
\begin{equation}
 -\sigma_\nu\in\partial \psi_\nu(u_\nu),\quad |\bsigma_\tau|\le f_b,
 \quad -\bsigma_\tau=f_b\,\frac{\bu_\tau}{|\bu_\tau|}\ {\rm if}\ \bu_\tau\not=\bzero\quad{\rm on}\ \Gamma_C. \label{cont6}
\end{equation}
Here, the function $\psi_\nu\colon \mathbb{R}\to\mathbb{R}$ is locally Lipschitz continuous and is not necessarily convex,
$f_b\ge 0$ is a constant upper bound of the friction force.  In particular, when $f_b=0$, the last two relations
in \eqref{cont6} degenerate to the frictionless condition
\[ -\bsigma_\tau=\bzero \quad{\rm on}\ \Gamma_C. \]
We assume the following properties on the function $\psi_\nu \colon \real \to\real$:
\begin{equation}
\left\{\begin{array}{ll}
{\rm (a)}  \ \psi_\nu(\cdot)\ \mbox{is locally Lipschitz on}\ \real;\\ [0.2mm]
{\rm (b)\  there\ exist\ constants}\ {\bar{c}}_0, {\bar{c}}_1 \ge 0\ {\rm such\ that}\\
{}\qquad  |\partial\psi_\nu(z)| \le {\bar{c}}_0+ {\bar{c}}_1\,|z|\quad \forall\,z\in\real;\\[0.2mm]
{\rm (c)\  there\ exists\ a\ constant}\ \alpha_{\psi_\nu} \ge 0\ {\rm such\ that}\\
{}\qquad \psi_\nu^0(z_1;z_2-z_1)+\psi_\nu^0(z_2; z_1-z_2)\le\alpha_{\psi_\nu}|z_1-z_2|^2\quad \forall\,z_1,z_2\in\real.
\end{array}
\right. \label{psi_nu}
\end{equation}

One can find derivations of weak formulations of contact problems in many references, e.g., 
\cite{MOS2013}, \cite[Chapter 4]{Han2024}.  We skip the derivations of weak formulations in this paper.
The weak formulation of the contact problem of \eqref{cont1}--\eqref{cont5} and \eqref{cont6} is the following.  
For convenience, we use $I_{\Gamma_C}(f)$ to denote the integral of a function $f$ over $\Gamma_C$.

\begin{problem}\label{prob:cont1}
{\it Find a displacement field $\bu\in \bV$ such that}
\begin{align}
& ({\cal E}(\bvarepsilon({\bu})),\bvarepsilon(\bv) -\bvarepsilon(\bu))_\mathbb{Q}
+I_{\Gamma_C}(f_b|\bv_\tau|)-I_{\Gamma_C}(f_b|\bu_\tau|)+I_{\Gamma_C} (\psi_\nu^0 (u_\nu; v_\nu-u_\nu))\nonumber\\
&{}\qquad \ge\langle \fb,\bv-\bu\rangle\quad\forall\,\bv\in \bV.
\label{cont7}
\end{align}
\end{problem}

In the second problem, we use the bilateral contact condition with a general friction law:
\begin{equation}
 u_\nu=0,\quad -\bsigma_\tau\in \partial\psi_\tau(\bu_\tau)\quad{\rm on}\ \Gamma_C. \label{cont8}
\end{equation}
Here, the function $\psi_\tau\colon \mathbb{R}^d\to\mathbb{R}$ is locally Lipschitz continuous and is not necessarily convex.  We assume the following properties on the function $\psi_\tau\colon\real^d \to\real$:
\begin{equation}
\left\{\begin{array}{ll}
{\rm (a)} \ \psi_\tau(\cdot)\ \mbox{is locally Lipschitz on} \ \real^d; \\
{\rm (b)} \ | \partial \psi_\tau(\bz) | \le {\bar{c}}_0 + {\bar{c}}_1|\bz|\ \forall\,\bz\in \real^d, \,
\mbox{with\ constants}\ {\bar{c}}_0, \, {\bar{c}}_1 \ge 0; \\
{\rm (d)} \ \psi_\tau^0(\bz_1;\bz_2 -\bz_1) + \psi_\tau^0(\bz_2;\bz_1 -\bz_2) \le\alpha_{\psi_\tau}|\bz_1-\bz_2|^2 \\
\qquad \mbox{for all}\ \bz_1,\bz_2 \in \real^d,\ \mbox{with\ a\ constant}\ \alpha_{\psi_\tau}\ge 0.
\end{array}
\right.
\label{psi_tau}
\end{equation}

Recall the space $\bV_1$ defined in \eqref{SpV1}.  The weak formulation of the contact problem of 
\eqref{cont1}--\eqref{cont5} and \eqref{cont8} is the following.

\begin{problem}\label{prob:cont2}
{\it Find a displacement field $\bu\in \bV_1$ such that}
\begin{equation}
({\cal E}(\bvarepsilon({\bu})),\bvarepsilon(\bv))_\mathbb{Q}
+I_{\Gamma_C} (\psi_\tau^0 (\bu_\tau; \bv_\tau))\ge\langle \fb,\bv\rangle\quad\forall\,\bv\in \bV_1.
\label{cont9}
\end{equation}
\end{problem}

In the third problem, we consider a frictional unilateral contact problem characterized by the following boundary conditions:
\begin{align} \label{cont30}
& u_\nu\le g,\ \sigma_\nu+\xi_\nu\le 0,\ (u_\nu-g)(\sigma_\nu+\xi_\nu)=0,
\ \xi_\nu\in\partial \psi_\nu(u_\nu)\quad{\rm on}\ \Gamma_C,\\
\label{cont31} & |\bsigma_\tau|\le f_b,
 \quad -\bsigma_\tau=f_b\,\frac{\bu_\tau}{|\bu_\tau|}\ {\rm if}\ \bu_\tau\not=\bzero\quad{\rm on}\ \Gamma_C.
\end{align}
These conditions model the frictional contact between an elastic body and a rigid foundation covered by 
a layer of elastic material. The constraint $u_\nu\le g$ limits the penetration of the body, where $g$ 
represents the thickness of the elastic layer. In cases where penetration occurs and the normal displacement 
does not reach the limit $ g $, the contact is governed by a multivalued normal compliance condition: 
$ -\sigma_\nu = \xi_\nu \in \partial \psi_\nu(u_\nu) $.  We assume \eqref{psi_nu} on the function $\psi_\nu$.

To treat the constraint $u_\nu\le g$ on $\Gamma_C$, we define a subset of the space $\bV$:
\begin{equation}
\bU:=\left\{\bv\in \bV\mid v_\nu\le g\ {\rm on\ }\Gamma_C\right\}.
\label{def:U}
\end{equation}
The weak formulation of the contact problem for \eqref{cont1}–-\eqref{cont5} and \eqref{cont30}-–\eqref{cont31} is as follows:

\begin{problem}\label{prob:cont3}
{\it Find a displacement field $\bu\in \bU$ such that}
\begin{align}
& ({\cal E}(\bvarepsilon({\bu})),\bvarepsilon(\bv) -\bvarepsilon(\bu))_\mathbb{Q}
+I_{\Gamma_C}(f_b|\bv_\tau|)-I_{\Gamma_C}(f_b|\bu_\tau|)+I_{\Gamma_C} (\psi_\nu^0 (u_\nu; v_\nu-u_\nu))\nonumber\\
&\qquad \ge\langle \fb,\bv-\bu\rangle\quad\forall\,\bv\in \bU.
\label{cont9_3}
\end{align}
\end{problem}

\section{Numerical analysis of an abstract variational-hemivariational inequality}\label{sec:abs}

In this section, we study the Galerkin method for an abstract variational-hemivariational inequality (VHI).
Any result on the abstract VHI applies to Problem \ref{prob:cont1} and Problem \ref{prob:cont2}.  In the abstract
VHI, we denote by $\Delta$ the physical domain or its sub-domain, or its boundary or part of the boundary, and
denote by $I_\Delta$ the integration over $\Delta$,
\[ I_\Delta(v)=\int_\Delta v\,dx\ {\rm if}\ \Delta\subset\Omega,\quad
I_\Delta(v)=\int_\Delta v\,ds\ {\rm if}\ \Delta\subset\Gamma.\]
We consider a function $\psi$ which generally depends on the spatial variable $\bx\in \Delta$.
Usually, we simply use the notation $\psi(\cdot)$ to stand for $\psi(\bx,\cdot)$.
For a positive integer $m$, we let
\begin{equation}
V_\psi=L^2(\Delta;\mathbb{R}^m).
\label{SpVpsi}
\end{equation}
For application in the study of Problems \ref{prob:cont1} and \ref{prob:cont3}, we take $m=1$, whereas for Problem \ref{prob:cont2}, $m=d$.

\subsection{The abstract variational-hemivariational inequality}\label{subsec:abs}

The abstract VHI assumes the following form.

\begin{problem}\label{prob:VHI}
Find $u\in K$ such that
\begin{equation}
\langle Au,v-u\rangle +\Phi(v)-\Phi(u)+I_\Delta(\psi^0(\gamma_\psi u;\gamma_\psi v-\gamma_\psi u))
\ge \langle f,v-u\rangle \quad\forall\,v\in K.
\label{eq1}
\end{equation}
\end{problem}

In \cite{Han2024}, this problem is called a VHI of rank $(1,1)$ to reflect the fact that in the VHI \eqref{eq1},
the convex function $\Phi$ depends on one argument and the locally Lipschitz continuous function $\psi$ depends on
one argument.  In the general case $K\not=V$, Problem \ref{prob:VHI} can be viewed as a constrained VHI of rank $(1,1)$.

When $K=V$ is the entire space, Problem \ref{prob:VHI} becomes an unconstrained VHI of rank $(1,1)$:
Find $u\in V$ such that
\begin{equation}
\langle Au,v-u\rangle +\Phi(v)-\Phi(u)+I_\Delta(\psi^0(\gamma_\psi u;\gamma_\psi v-\gamma_\psi u))
\ge \langle f,v-u\rangle \quad\forall\,v\in V.
\label{eq1aa}
\end{equation}

In the study of Problem \ref{prob:VHI} and its numerical approximation, we will make some assumptions on the data.
\smallskip

\noindent \underline{$H(V)$} $V$ is a real Hilbert space.

\smallskip
\noindent \underline{$H(K)$} $K$ is a non-empty, closed and convex set in $V$.

\smallskip
\noindent \underline{$H(A)$} $A\colon V\to V^*$ is $L_A$-Lipschitz continuous and $m_A$-strongly monotone.

\smallskip
\noindent \underline{$H(\Phi)$} $\Phi\colon V\to\mathbb{R}$ is convex and continuous on $V$.

Note that an operator $A\colon V\to V^*$ is said to be $L_A$-Lipschitz continuous if
\begin{equation}
\|Av_1-Av_2\|_{V^*} \le L_A \|v_1-v_2\|_V\quad\forall\,v_1,v_2\in V,
\label{eq:5.3b}
\end{equation}
and it is said to be $m_A$-strongly monotone if
\begin{equation}
\langle Av_1-Av_2,v_1-v_2\rangle \ge m_A\|v_1-v_2\|_V^2 \quad\forall\,v_1,v_2\in V.
\label{eq2z}
\end{equation}
A consequence of the assumption $H(\Phi)$ is that for some constants $c_3$ and $c_4$, not necessarily positive,
\begin{equation}
\Phi(v)\ge c_3+c_4 \|v\|_V\quad\forall\,v\in V,
\label{Phi:lb}
\end{equation}
cf.\ e.g., \cite[Lemma 11.3.5]{AH2009}.

Generally, we can consider the situation where $\psi=\psi(\bx,z)$ is a function defined for
$\bx\in\Delta$ and $z\in\mathbb{R}^m$.  To simplify the exposition, we will only consider the
case where $\psi=\psi(z)$ does not depend on $\bx\in\Delta$.  We introduce the following assumption.

\smallskip
\noindent \underline{$H(\psi)$}  $\gamma_\psi\in {\cal L}(V;V_\psi)$;  $\psi\colon \mathbb{R}^m\to\mathbb{R}$
is locally Lipschitz continuous and for some non-negative constants $c_\psi$ and $\alpha_\psi$,
\begin{align}
|\partial\psi(z)|_{\mathbb{R}^m} &\le c_\psi\left(1+|z|_{\mathbb{R}^m}\right)
\quad\forall\,z\in\mathbb{R}^m, \label{eq10w} \\
\psi^0(z_1;z_2-z_1)+\psi^0(z_2;z_1-z_2) & \le\alpha_\psi|z_1-z_2|_{\mathbb{R}^m}^2
\quad\forall\,z_1,z_2\in {\mathbb{R}^m}.   \label{eq4}
\end{align}

\noindent \underline{$H(f)$} $f\in V^*$.

\smallskip

Note that \eqref{eq4} is equivalent to the following inequality:
\begin{equation}
\langle v^*_1-v^*_2,v_1-v_2\rangle \ge -\alpha_\Psi |v_1-v_2|_{\mathbb{R}^m}^2\quad
\forall\,v_i\in \mathbb{R}^m,\, v^*_i\in\partial \psi(v_i),\ i=1,2.
\label{eq4'}
\end{equation}

As consequences of the assumptions on $\psi$, we have the next result.

\begin{lemma}\label{lem:psi2}
Under the assumption $H(\psi)$,
\begin{equation}
\left|I_\Delta(\psi^0(\gamma_\psi u;\gamma_\psi v))\right|
\le c\left(1+\|u\|_V\right)\|\gamma_\psi v\|_{V_\psi}\quad\forall\,u,v\in V.
\label{3.32a}
\end{equation}
\end{lemma}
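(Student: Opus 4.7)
The plan is to reduce the estimate to a pointwise bound on $\psi^0$, then integrate and apply Cauchy--Schwarz. The key ingredient is Proposition \ref{subdiff}(iii), which gives the max characterization
\[
\psi^0(z_1;z_2) = \max\bigl\{\langle z^*,z_2\rangle \mid z^*\in\partial\psi(z_1)\bigr\},
\]
so that for every $z_1,z_2\in\mathbb{R}^m$ we have $|\psi^0(z_1;z_2)|\le |\partial\psi(z_1)|_{\mathbb{R}^m}\,|z_2|_{\mathbb{R}^m}$. Combining this with the growth bound \eqref{eq10w} yields the pointwise inequality
\[
|\psi^0(z_1;z_2)| \le c_\psi\bigl(1+|z_1|_{\mathbb{R}^m}\bigr)|z_2|_{\mathbb{R}^m}\quad\forall\,z_1,z_2\in\mathbb{R}^m.
\]

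Next I would apply this with $z_1=(\gamma_\psi u)(\bx)$ and $z_2=(\gamma_\psi v)(\bx)$ for $\bx\in\Delta$ to obtain
\[
\bigl|\psi^0(\gamma_\psi u;\gamma_\psi v)\bigr| \le c_\psi\bigl(1+|\gamma_\psi u|_{\mathbb{R}^m}\bigr)|\gamma_\psi v|_{\mathbb{R}^m}\quad\text{a.e.\ on }\Delta.
\]
Integrating over $\Delta$ and using the Cauchy--Schwarz inequality on both the constant term and the product term gives
\[
\bigl|I_\Delta(\psi^0(\gamma_\psi u;\gamma_\psi v))\bigr|
\le c_\psi\bigl(|\Delta|^{1/2}+\|\gamma_\psi u\|_{V_\psi}\bigr)\|\gamma_\psi v\|_{V_\psi}.
\]
Finally, since $\gamma_\psi\in\mathcal{L}(V;V_\psi)$, one has $\|\gamma_\psi u\|_{V_\psi}\le \|\gamma_\psi\|\,\|u\|_V$, and absorbing all constants into a single $c>0$ delivers \eqref{3.32a}.

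The only nontrivial point — and the one I would be most careful about — is the measurability of the integrand $\bx\mapsto \psi^0(\gamma_\psi u(\bx);\gamma_\psi v(\bx))$ needed for $I_\Delta$ to make sense; this follows from the upper semicontinuity of $\psi^0$ in its arguments (Proposition \ref{subdiff}(iv)) together with the measurability of $\gamma_\psi u$ and $\gamma_\psi v$, so no real difficulty arises. Otherwise the argument is a direct pointwise-to-integral estimate, with the max characterization of $\psi^0$ and the subgradient growth condition doing all the work.
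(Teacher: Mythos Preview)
Your proof is correct and follows essentially the same route as the paper: obtain the pointwise bound $|\psi^0(\gamma_\psi u;\gamma_\psi v)|\le c(1+|\gamma_\psi u|)|\gamma_\psi v|$ from the subgradient growth condition \eqref{eq10w}, integrate with Cauchy--Schwarz, and then invoke $\gamma_\psi\in\mathcal{L}(V;V_\psi)$. The paper's version is terser (it asserts the pointwise bound directly from \eqref{eq10w} without explicitly citing the max characterization), and it omits the measurability remark you added, but the argument is the same.
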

\begin{proof}
By the assumption \eqref{eq10w}, we have
\[ \left|\psi^0(\gamma_\psi u;\gamma_\psi v)\right|\le c\left(1+|\gamma_\psi u|_{\mathbb{R}^m}\right)
|\gamma_\psi v|_{\mathbb{R}^m}.\]
Then by an application of the Cauchy-Schwarz inequality,
\[ \left|I_\Delta(\psi^0(\gamma_\psi u;\gamma_\psi v))\right|
\le c\left(1+\|\gamma_\psi u\|_{V_\psi}\right)\|\gamma_\psi v\|_{V_\psi}. \]
Since $\gamma_\psi\in {\cal L}(V;V_\psi)$, we deduce \eqref{3.32a} from the above inequality.  \hfill
\end{proof}

Introduce an auxiliary functional
\begin{equation}
\Psi(v)=I_\Delta(\psi(\gamma_\psi v)),\quad v\in V.
\label{def:Psi}
\end{equation}
Denote by $c_\Delta>0$ the smallest constant in the inequality\index{$c_\Delta$}
\begin{equation}
I_\Delta(|\gamma_\psi v|_{\mathbb{R}^m}^2)\le c_\Delta^2 \|v\|_V^2\quad\forall\,v\in V.
\label{eq12w}
\end{equation}
We have the next result on properties of $\Psi$ (\cite[Section 3.3]{MOS2013}).

\begin{lemma}\label{lem:Ipsi}
Assume $H(\psi)$.  Then $\Psi\colon V \to \real$ is well defined by \eqref{def:Psi}, is locally
Lipschitz continuous on $V$, and
\begin{equation}
\Psi^0(u;v)\le I_\Delta(\psi^0(\gamma_\psi u;\gamma_\psi v)),\quad u,v\in V.
\label{J3}
\end{equation}
Moreover, there exists a constant $c\ge 0$ such that
\begin{equation}
\|\partial \Psi(v)\|_{V^*} \le c\left(1+ \|v\|_V\right) \quad \forall\,v\in V \label{J1}
\end{equation}
and
\begin{equation}
\Psi^0(v_1;v_2-v_1)+\Psi^0(v_2;v_1-v_2)\le \alpha_\psi \|\gamma_\psi(v_1-v_2)\|_{V_\psi}^2
\quad\forall\,v_1,v_2 \in V.  \label{J2}
\end{equation}
\end{lemma}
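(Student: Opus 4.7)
My plan is to treat the four assertions in turn, with the growth bound in $H(\psi)$ and Lebourg's mean value theorem doing the bulk of the work; the only delicate step is the passage from the pointwise definition of $\psi^0$ to the integrated inequality \eqref{J3}.

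For well-definedness and local Lipschitz continuity, I would first apply Lebourg's mean value theorem on $\mathbb{R}^m$ together with \eqref{eq10w} to obtain the pointwise bounds
\[
|\psi(z)|\le |\psi(0)|+c_\psi\bigl(|z|_{\mathbb{R}^m}+|z|_{\mathbb{R}^m}^2\bigr),\qquad |\psi(z_1)-\psi(z_2)|\le c_\psi\bigl(1+|z_1|_{\mathbb{R}^m}+|z_2|_{\mathbb{R}^m}\bigr)|z_1-z_2|_{\mathbb{R}^m}.
\]
Since $\gamma_\psi v\in V_\psi=L^2(\Delta;\mathbb{R}^m)$, the first bound shows $\psi(\gamma_\psi v(\cdot))\in L^1(\Delta)$, so $\Psi$ is well defined on $V$. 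Combining the second bound with Cauchy--Schwarz and the continuity of $\gamma_\psi$ delivers local Lipschitz continuity of $\Psi$ on bounded subsets of $V$.

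The bound \eqref{J3} is the principal step. Fix $u,v\in V$ and choose sequences $w_n\to u$ in $V$ and $\lambda_n\downarrow 0$ along which the limsup in \eqref{Clarke1} applied to $\Psi$ is attained, so that
\[
\Psi^0(u;v)=\lim_{n\to\infty} I_\Delta\!\left(\frac{\psi(\gamma_\psi w_n+\lambda_n\gamma_\psi v)-\psi(\gamma_\psi w_n)}{\lambda_n}\right).
\]
Because $\gamma_\psi w_n\to \gamma_\psi u$ in $L^2(\Delta;\mathbb{R}^m)$, I would pass to a subsequence, not relabeled, along which $\gamma_\psi w_n\to \gamma_\psi u$ a.e.\ on $\Delta$ and $|\gamma_\psi w_n|_{\mathbb{R}^m}\le h$ for some $h\in L^2(\Delta)$. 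Lebourg's theorem together with \eqref{eq10w} then bounds the difference quotient under the integral in absolute value by the integrable function $c_\psi\bigl(1+h+|\gamma_\psi v|_{\mathbb{R}^m}\bigr)|\gamma_\psi v|_{\mathbb{R}^m}$ whenever $\lambda_n\le 1$, while pointwise the definition \eqref{Clarke1} of $\psi^0$ yields
\[
\limsup_{n\to\infty}\frac{\psi(\gamma_\psi w_n+\lambda_n\gamma_\psi v)-\psi(\gamma_\psi w_n)}{\lambda_n}\le \psi^0(\gamma_\psi u;\gamma_\psi v)\quad\text{a.e.\ on }\Delta.
\]
The reverse Fatou lemma, legitimate thanks to the integrable envelope just exhibited, now gives \eqref{J3}.

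Once \eqref{J3} is in hand the remaining two assertions are short. For \eqref{J1} I would combine \eqref{J3} with Lemma \ref{lem:psi2} to obtain $\Psi^0(v;w)\le c(1+\|v\|_V)\|w\|_V$ for every $w\in V$, and then read the claimed norm bound off from Proposition \ref{subdiff}(iii). For \eqref{J2} I would apply \eqref{J3} to the pairs $(v_1,v_2-v_1)$ and $(v_2,v_1-v_2)$, add the two inequalities, invoke the pointwise relaxed-monotonicity \eqref{eq4} inside the integral, and recognize the right-hand side as $\alpha_\psi\|\gamma_\psi(v_1-v_2)\|_{V_\psi}^2$.

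The main obstacle is the interchange of limsup and integral hidden in \eqref{J3}: one has only a pointwise (almost-everywhere) upper bound by $\psi^0(\gamma_\psi u;\gamma_\psi v)$, not any mode of integral convergence, so the correct tool is the reverse Fatou lemma rather than dominated convergence. The growth hypothesis \eqref{eq10w} is therefore doing double duty, producing both the $L^1$ control needed to define $\Psi$ and the integrable envelope that licenses the reverse Fatou step.
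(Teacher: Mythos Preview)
The paper does not actually prove this lemma; it simply cites \cite[Section 3.3]{MOS2013} and moves on. Your proof outline is correct and follows essentially the standard route one finds in that reference: Lebourg's mean value theorem plus the growth bound \eqref{eq10w} for well-definedness and local Lipschitz continuity; a subsequence with a.e.\ convergence and an $L^2$ majorant, followed by reverse Fatou, for the key inequality \eqref{J3}; and then \eqref{J1} and \eqref{J2} as immediate corollaries of \eqref{J3} together with Lemma~\ref{lem:psi2} and \eqref{eq4} respectively. Your identification of the reverse Fatou step as the one subtle point, and of \eqref{eq10w} as the hypothesis that supplies the integrable envelope, is exactly right.
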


A well-posedness result on Problem \ref{prob:VHI} is stated next; its proof can be found in, 
e.g., \cite[Section 5.4]{Han2024}.

\begin{theorem}\label{thm1}
Assume $H(V)$, $H(K)$, $H(A)$, $H(\Phi)$, $H(\psi)$, $H(f)$, and $\alpha_\psi c_\Delta^2<m_A$.  
Then Problem \ref{prob:VHI} has a unique solution $u\in K$. Moreover, the solution $u\in K$ 
depends Lipschitz continuously on $f\in V^*$.
\end{theorem}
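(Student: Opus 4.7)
The plan is to establish uniqueness and Lipschitz continuous dependence on the data first, directly from the strong monotonicity of $A$ combined with the relaxed monotonicity \eqref{eq4}, and then to tackle existence via a fixed-point reduction to a classical elliptic variational inequality. In all three conclusions, the smallness condition $\alpha_\psi c_\Delta^2<m_A$ is the quantitative hypothesis that closes every estimate.

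For uniqueness and Lipschitz dependence, I would let $u_1,u_2\in K$ solve Problem \ref{prob:VHI} with right-hand sides $f_1,f_2$, test each inequality against the other solution, and add. The $\Phi$-terms cancel; the $A$-term contributes $-m_A\|u_1-u_2\|_V^2$ by \eqref{eq2z}; and the four generalized directional derivative terms combine, pointwise on $\Delta$ at $z_i=\gamma_\psi u_i(\bx)$, through \eqref{eq4}, giving after integration and \eqref{eq12w} the upper bound $\alpha_\psi c_\Delta^2\|u_1-u_2\|_V^2$. Rearranging and applying Cauchy--Schwarz on the right-hand side yields
\[
(m_A-\alpha_\psi c_\Delta^2)\,\|u_1-u_2\|_V\le\|f_1-f_2\|_{V^*},
\]
which simultaneously proves uniqueness (take $f_1=f_2$) and Lipschitz dependence with constant $(m_A-\alpha_\psi c_\Delta^2)^{-1}$.

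For existence, I would freeze the non-convex term. Since $\psi^0(z;\cdot)$ is sublinear, the functional
\[
\Phi_w(v) := \Phi(v) + I_\Delta(\psi^0(\gamma_\psi w;\gamma_\psi v))
\]
is convex and, by Lemma \ref{lem:psi2}, continuous on $V$ for each $w\in V$. The frozen elliptic variational inequality
\[
u_w\in K,\quad \langle Au_w,v-u_w\rangle+\Phi_w(v)-\Phi_w(u_w)\ge\langle f,v-u_w\rangle\quad\forall\,v\in K
\]
is then uniquely solvable by the classical Stampacchia theory for strongly monotone Lipschitz operators coupled with a convex continuous perturbation. This defines a map $T\colon V\to K$, $T(w)=u_w$. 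Any fixed point $u=Tu$ solves Problem \ref{prob:VHI}: the subadditivity of $\psi^0$ in its second argument gives, pointwise, $\psi^0(\gamma_\psi u;\gamma_\psi v)-\psi^0(\gamma_\psi u;\gamma_\psi u)\le \psi^0(\gamma_\psi u;\gamma_\psi v-\gamma_\psi u)$, so the frozen inequality at $w=u$ implies the original VHI.

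The main obstacle I anticipate is showing that $T$ is a strict contraction. Writing the frozen VI for $u_1=T(w_1)$ and $u_2=T(w_2)$, testing each with the other, and adding, the strong monotonicity of $A$ gives
\[
m_A\|u_1-u_2\|_V^2\le I_\Delta\bigl[\psi^0(\gamma_\psi w_1;\gamma_\psi u_2)-\psi^0(\gamma_\psi w_1;\gamma_\psi u_1)+\psi^0(\gamma_\psi w_2;\gamma_\psi u_1)-\psi^0(\gamma_\psi w_2;\gamma_\psi u_2)\bigr].
\]
The delicate step is to bound the pointwise integrand by an expression of the form $\alpha_\psi\,|\gamma_\psi(w_1-w_2)|\,|\gamma_\psi(u_1-u_2)|$, using the max-representation in Proposition \ref{subdiff}(iii) together with the subdifferential form \eqref{eq4'} of the relaxed monotonicity via appropriate measurable selections $\eta_i(\bx)\in\partial\psi(\gamma_\psi w_i(\bx))$; the measurability and existence of such selections follow from Proposition \ref{subdiff}(v)--(vi). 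Combined with \eqref{eq12w} and Cauchy--Schwarz, this yields $\|u_1-u_2\|_V\le(\alpha_\psi c_\Delta^2/m_A)\|w_1-w_2\|_V$, so that the hypothesis $\alpha_\psi c_\Delta^2<m_A$ renders $T$ a strict contraction and Banach's fixed-point theorem supplies a solution. An alternative route, used in \cite{MOS2013}, is the surjectivity theory for pseudomonotone multivalued operators applied to $A+\partial\Psi+\partial\Phi+N_K$, but the minimization/fixed-point strategy above is the one aligned with \cite{Han2024}.
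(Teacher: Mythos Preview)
The paper itself does not give a proof of this theorem; it simply cites \cite[Section 5.4]{Han2024}. Your uniqueness and Lipschitz-dependence argument is correct and is the standard one. The existence argument, however, has a genuine gap in the contraction estimate.

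After freezing the first argument of $\psi^0$ at $w$ and writing $u_i=T(w_i)$, you arrive at
\[
m_A\|u_1-u_2\|_V^2\le I_\Delta\bigl[\psi^0(\gamma_\psi w_1;\gamma_\psi u_2)-\psi^0(\gamma_\psi w_1;\gamma_\psi u_1)+\psi^0(\gamma_\psi w_2;\gamma_\psi u_1)-\psi^0(\gamma_\psi w_2;\gamma_\psi u_2)\bigr],
\]
and claim the integrand is bounded pointwise by $\alpha_\psi\,|\gamma_\psi(w_1-w_2)|\,|\gamma_\psi(u_1-u_2)|$. This is false under $H(\psi)$ alone: the relaxed monotonicity \eqref{eq4'} controls $\langle\eta_1-\eta_2,z_1-z_2\rangle$ from below, not $\langle\eta_1-\eta_2,d\rangle$ for an unrelated direction $d$. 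A clean counterexample is the \emph{convex} case $\psi(z)=\tfrac12 z^2$ ($m=1$), where $\alpha_\psi=0$ and $\psi^0(z;v)=zv$. Your frozen problem then reads $\langle Au_w+\gamma_\psi^*\gamma_\psi w,\,v-u_w\rangle+\Phi(v)-\Phi(u_w)\ge\langle f,v-u_w\rangle$, whose solution map has Lipschitz constant $c_\Delta^2/m_A$, not $\alpha_\psi c_\Delta^2/m_A=0$; so $T$ need not be a contraction at all.

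The fixed-point route that actually closes (and is the one in \cite{Han2024}) keeps the first slot of $\psi^0$ tied to the \emph{unknown}, not the frozen variable. One first treats the potential case (where $\langle Au,v\rangle=(u,v)_V$, say) by minimization: Lemma \ref{lem:Ipsi} and Theorem \ref{non_convex} make $v\mapsto\tfrac12\|v\|_V^2+\Psi(v)+\Phi(v)-\langle f,v\rangle$ strongly convex on $K$ under the smallness condition, hence uniquely minimized, and the minimizer solves the VHI. For general $A$ one then defines, for $\rho>0$, $T_\rho(w)\in K$ as the solution of
\[
(u,v-u)_V+\rho\bigl[\Phi(v)-\Phi(u)+I_\Delta(\psi^0(\gamma_\psi u;\gamma_\psi v-\gamma_\psi u))\bigr]\ge (w-\rho(Aw-f),\,v-u)_V\quad\forall\,v\in K.
\]
Testing the two problems against each other, the $\psi^0$-terms now combine via \eqref{eq4} at $u_1,u_2$ to give $\rho\alpha_\psi c_\Delta^2\|u_1-u_2\|_V^2$ on the \emph{left}, and the $w$-dependence sits in the linear term $w-\rho Aw$, yielding
\[
\|T_\rho(w_1)-T_\rho(w_2)\|_V\le\frac{\sqrt{1-2\rho m_A+\rho^2L_A^2}}{1-\rho\alpha_\psi c_\Delta^2}\,\|w_1-w_2\|_V,
\]
which is a strict contraction for small $\rho>0$ precisely because $m_A>\alpha_\psi c_\Delta^2$.
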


\subsection{Galerkin method for the abstract VHI}

Since there is no analytic solution formula for a variational-hemivariational inequality (VHI) arising in
applications, numerical methods are needed to solve the inequality problem.  In this section,
we provide a detailed discussion for the numerical solution of Problem \ref{prob:VHI}.
The numerical method is of Galerkin type.  We prove convergence of the numerical solutions in 
Subsection \ref{sec:con}, and derive a C\'{e}a-type inequality for error estimation of the 
numerical solutions in Subsection \ref{sec:est}.  In the rest of this section,
for Problem \ref{prob:VHI}, we assume $H(V)$, $H(K)$, $H(A)$, $H(\Phi)$, $H(\psi)$, $H(f)$ and
$\alpha_\psi c_\Delta^2<m_A$, so that by Theorem \ref{thm1}, the problem has a unique solution.
Let $V^h$ be a finite dimensional subspace of $V$, $h>0$ being a spatial discretization parameter.
Let $K^h$ be a non-empty, closed and convex subset of $V^h$.
Then, a Galerkin approximation of Problem~\ref{prob:VHI} is the following.

\smallskip
\begin{problem} \label{prob:VHIh}
{\it Find an element $u^h\in K^h$ such that}
\begin{align}
& \langle Au^h,v^h-u^h\rangle+\Phi(v^h)-\Phi(u^h)+I_\Delta(\psi^0(\gamma_\psi u^h;\gamma_\psi v^h-\gamma_\psi u^h))\nonumber \\
&\qquad{}\ge\langle f,v^h-u^h\rangle\quad\forall\,v^h\in K^h.
\label{eq1h}
\end{align}
\end{problem}

For the well-posedness of Problem \ref{prob:VHIh}, we can apply Theorem \ref{thm1} which is valid in the  
setting of finite-dimensional spaces as well.  For completeness, we state the result formally as a theorem.

\begin{theorem}\label{thm1h}
Assume $H(V)$, $H(K)$, $H(A)$, $H(\Phi)$, $H(\psi)$, $H(f)$, and $\alpha_\psi c_\Delta^2<m_A$.
Let $V^h$ be a finite-dimensional subspace of $V$ and let $K^h$ be a non-empty, closed and convex subset of $V^h$.
Then Problem~\ref{prob:VHIh} has a unique solution.
\end{theorem}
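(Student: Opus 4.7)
The plan is to observe that Problem \ref{prob:VHIh} is itself an instance of the abstract variational-hemivariational inequality Problem \ref{prob:VHI}, posed with the finite-dimensional Hilbert space $V^h$ in place of $V$ and the admissible set $K^h$ in place of $K$. Hence the proof reduces to verifying that the hypotheses $H(V)$, $H(K)$, $H(A)$, $H(\Phi)$, $H(\psi)$, $H(f)$ and the smallness condition $\alpha_\psi c_\Delta^2<m_A$ are inherited by the discrete data, after which Theorem \ref{thm1} yields existence and uniqueness directly.

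First I would equip $V^h$ with the inner product and norm inherited from $V$. Being a finite-dimensional subspace of a Hilbert space, $V^h$ is closed and hence a Hilbert space on its own, giving $H(V)$ for the discrete problem. The assumption $H(K)$ for $K^h$ is part of the hypothesis. For $H(A)$, I would consider the restriction $A^h\colon V^h\to (V^h)^*$ defined by $\langle A^h v^h, w^h\rangle_{(V^h)^*\times V^h}=\langle Av^h,w^h\rangle$ for $v^h,w^h\in V^h$. The Lipschitz bound \eqref{eq:5.3b} and the strong monotonicity \eqref{eq2z} carry over at once, because for $v_1^h,v_2^h\in V^h\subset V$, the norms on $V^h$ and $V$ coincide and
\[
\|A^h v_1^h-A^h v_2^h\|_{(V^h)^*}=\sup_{w^h\in V^h\setminus\{0\}}\frac{\langle Av_1^h-Av_2^h,w^h\rangle}{\|w^h\|_V}\le L_A\|v_1^h-v_2^h\|_V.
\]
The restrictions of $\Phi$ and $f$ to $V^h$ preserve convexity/continuity and linearity/continuity respectively, giving $H(\Phi)$ and $H(f)$ in the discrete setting.

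For $H(\psi)$, the operator $\gamma_\psi|_{V^h}$ is still linear and bounded into $V_\psi$; the function $\psi$ and its growth/relaxed-monotonicity constants $c_\psi,\alpha_\psi$ are unchanged. Finally, the smallness condition $\alpha_\psi c_\Delta^2<m_A$ involves the constant $c_\Delta$ in the trace-type inequality \eqref{eq12w}; since \eqref{eq12w} holds for every $v\in V$ it holds in particular for every $v^h\in V^h$, so the same constant (or any smaller admissible constant on $V^h$) works. All hypotheses of Theorem \ref{thm1} are therefore satisfied for the discrete problem, and applying it yields a unique solution $u^h\in K^h$ to \eqref{eq1h}.

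I do not anticipate a genuine obstacle: the argument is purely a matter of checking that each ingredient restricts properly to the subspace. The only point one has to be mindful of is using the same constant $c_\Delta$ (rather than a possibly larger discrete analog) so that the smallness condition is automatically preserved; this works because \eqref{eq12w} is a pointwise-in-$v$ inequality on $V$.
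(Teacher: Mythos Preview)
Your proposal is correct and is precisely the paper's own approach: the paper simply remarks that Theorem~\ref{thm1} is valid in the finite-dimensional setting as well, and states Theorem~\ref{thm1h} without further proof. Your verification that each hypothesis restricts properly to $V^h$ and $K^h$, including the observation that the constant $c_\Delta$ from \eqref{eq12w} is inherited, is exactly the content needed to justify that one-line reduction.
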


The approximation is called external if $K^h\not\subset K$, and is internal if $K^h\subset K$.
In \cite{HSD18}, the internal approximation with the choice $K^h=V^h\cap K$ is considered for Problem \ref{prob:VHI}.

\subsection{Convergence under basic solution regularity}\label{sec:con}

In this section, we provide a general discussion of convergence for the numerical solution
defined by Problem \ref{prob:VHIh}.  The key point is that the convergence is shown under the minimal
solution regularity $u\in K$ that is available from Theorem \ref{thm1}.  For convergence analysis,
we will need $\{K^h\}_h$ to approximate $K$ in the sense of Mosco (cf.\ \cite{Mo68, GLT1981}):
\begin{align}
&\label{3.3i} v^h\in K^h\ {\rm and}\ v^h\weak v\ {\rm in}\ V\ {\rm imply}\ v\in K;\\
&\label{3.3h} \forall\,v\in K,\ \exists\,v^h\in K^h\ {\rm such\ that\ }v^h\to v\ {\rm in\ }V\ {\rm as}\
h\to 0.
\end{align}

The following uniform boundedness property will be useful for convergence analysis of the numerical
solutions.

\smallskip
\begin{proposition}\label{prop:bd}
Keep the assumptions stated in Theorem \ref{thm1h}.  In addition, assume \eqref{3.3h}.
The discrete solution $u^h$ of Problem \ref{prob:VHIh} is uniformly bounded with respect to $h$, i.e.,
there exists a constant $M>0$ independent of $h$ such that $\|u^h\|_V\le M$.
\end{proposition}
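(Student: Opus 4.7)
The plan is to insert a bounded, $h$-indexed test function into \eqref{eq1h}, then combine the strong monotonicity of $A$ with the relaxed monotonicity condition on $\psi$ to extract a quadratic-in-$\|u^h\|_V$ inequality whose leading coefficient is $m_A - \alpha_\psi c_\Delta^2 > 0$; uniform boundedness will then fall out of that scalar inequality.

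Concretely, I would fix some reference element $v_0 \in K$ (which exists by $H(K)$) and use the Mosco approximation property \eqref{3.3h} to obtain $v_0^h \in K^h$ with $v_0^h \to v_0$ in $V$. Then $\{v_0^h\}_h$, $\{\|Av_0^h\|_{V^*}\}_h$, and $\{\Phi(v_0^h)\}_h$ are all uniformly bounded in $h$ (the last using continuity of $\Phi$). Setting $v^h = v_0^h$ in \eqref{eq1h} and rearranging gives
\[
\langle Au^h - Av_0^h,\, u^h - v_0^h\rangle \le \langle Av_0^h,\, v_0^h - u^h\rangle + \Phi(v_0^h) - \Phi(u^h) + I_\Delta\!\bigl(\psi^0(\gamma_\psi u^h;\, \gamma_\psi v_0^h - \gamma_\psi u^h)\bigr) - \langle f,\, v_0^h - u^h\rangle.
\]
On the left, $H(A)$ gives the lower bound $m_A \|u^h - v_0^h\|_V^2$. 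The crucial manipulation is on the $\psi^0$ term: I would use \eqref{eq4} to write
\[
\psi^0(\gamma_\psi u^h;\, \gamma_\psi v_0^h - \gamma_\psi u^h) \le \alpha_\psi |\gamma_\psi u^h - \gamma_\psi v_0^h|_{\mathbb{R}^m}^2 - \psi^0(\gamma_\psi v_0^h;\, \gamma_\psi u^h - \gamma_\psi v_0^h),
\]
integrate over $\Delta$, apply \eqref{eq12w} to the quadratic piece, and apply Lemma \ref{lem:psi2} to the second piece (using the boundedness of $\|v_0^h\|_V$). This produces a bound of the form $\alpha_\psi c_\Delta^2 \|u^h - v_0^h\|_V^2 + C_1 \|u^h - v_0^h\|_V$, in which the leading coefficient is exactly what will cancel against $m_A$.

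The remaining terms are routine: the dual pairings with $Av_0^h$ and $f$ contribute $C \|u^h - v_0^h\|_V$; $\Phi(v_0^h)$ is bounded; and $-\Phi(u^h)$ is controlled from above by \eqref{Phi:lb} combined with the triangle inequality $\|u^h\|_V \le \|u^h - v_0^h\|_V + \|v_0^h\|_V$, yielding at worst a further linear contribution in $\|u^h - v_0^h\|_V$ plus a constant. Setting $t = \|u^h - v_0^h\|_V$, everything collapses to
\[
(m_A - \alpha_\psi c_\Delta^2)\, t^2 \le C_1 t + C_2,
\]
with $C_1, C_2$ independent of $h$. Since $m_A - \alpha_\psi c_\Delta^2 > 0$ by assumption, $t$ is bounded uniformly in $h$, and hence so is $\|u^h\|_V$.

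The main obstacle is arranging the $\psi^0$ estimate so that the quadratic growth in $\|u^h\|_V$ appears with coefficient exactly $\alpha_\psi c_\Delta^2$, not something larger. A naive application of the subgradient growth bound \eqref{eq10w} to $\psi^0(\gamma_\psi u^h;\, \gamma_\psi v_0^h - \gamma_\psi u^h)$ would produce a $c(1+\|u^h\|_V)\|\gamma_\psi(u^h - v_0^h)\|_{V_\psi}$ bound whose quadratic part has an uncontrolled constant and could defeat the cancellation against $m_A$. The relaxed monotonicity condition \eqref{eq4} is precisely what is needed to swap the $\psi^0$ evaluation from the unbounded argument $u^h$ onto the bounded argument $v_0^h$, at the cost of a quadratic term whose coefficient is the sharp $\alpha_\psi$.
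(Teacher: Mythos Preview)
Your proposal is correct and follows essentially the same approach as the paper's proof: fix a reference element in $K$, approximate it via \eqref{3.3h}, test with it in \eqref{eq1h}, and combine strong monotonicity with the relaxed monotonicity \eqref{eq4} to swap the $\psi^0$ evaluation onto the bounded argument and obtain the sharp coefficient $\alpha_\psi c_\Delta^2$. The paper's treatment of the individual terms (using \eqref{Phi:lb} for $-\Phi(u^h)$, \eqref{eq10w} for the swapped $\psi^0$ term, and then collecting into a quadratic inequality) matches yours line by line.
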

\begin{proof}
Since $K$ is non-empty, there is an element $u_0\in K$.  We fix one such
element.  Then by \eqref{3.3h}, there exists $u^h_0\in K^h$ such that
 \[ u^h_0\to u_0\ {\rm in\ }V\ {\rm as\ }h\to 0.\]

By the strong monotonicity of $A$ from $H(A)$,
\begin{align}
 m_A \|u^h-u^h_0\|_V^2 & \le  \langle Au^h-Au^h_0,u^h-u^h_0\rangle \nonumber\\
& \le  \langle Au^h,u^h-u^h_0\rangle- \langle Au^h_0,u^h-u^h_0\rangle.  \label{5.3a}
\end{align}
Let $v^h=u^h_0$ in \eqref{eq1h} to get
\[ \langle Au^h,u^h_0-u^h \rangle+\Phi(u^h_0)- \Phi(u^h)
+I_\Delta(\psi^0(\gamma_\psi u^h;\gamma_\psi u^h_0-\gamma_\psi u^h))\ge \langle f,u^h_0-u^h \rangle, \]
which is rewritten as 
\[ \langle Au^h,u^h-u^h_0 \rangle \le \Phi(u^h_0)- \Phi(u^h)
+I_\Delta(\psi^0(\gamma_\psi u^h;\gamma_\psi u^h_0-\gamma_\psi u^h))+ \langle f,u^h-u^h_0 \rangle. \]
Then, we have from \eqref{5.3a} that
\begin{align}
m_A \|u^h-u^h_0\|_V^2 & \le\Phi(u^h_0)-\Phi(u^h)
+I_\Delta(\psi^0(\gamma_\psi u^h;\gamma_\psi u_0^h-\gamma_\psi u^h))\nonumber\\
&\quad{} + \langle f,u^h-u_0^h\rangle- \langle Au^h_0,u^h-u^h_0\rangle.
\label{3.3a}
\end{align}

From \eqref{Phi:lb},
\begin{equation}
-\Phi(u^h)\le -c_3-c_4\|u^h\|_{V}.
\label{4.4a}
\end{equation}
Take $z_1=\gamma_\psi u^h$ and $z_2=\gamma_\psi u^h_0$ in \eqref{eq4} to obtain
\[  \psi^0(\gamma_\psi u^h;\gamma_\psi u_0^h-\gamma_\psi u^h)
\le \alpha_\psi|\gamma_\psi (u_0^h-u^h)|_{\mathbb{R}^m}^2- \psi^0(\gamma_\psi u^h_0;\gamma_\psi u^h-\gamma_\psi u^h_0).\]
It follows from \eqref{eq10w} that
\[ - \psi^0(\gamma_\psi u^h_0;\gamma_\psi u^h-\gamma_\psi u^h_0)
\le \left(c_0+c_1|\gamma_\psi u^h_0|_{\mathbb{R}^m}\right)|\gamma_\psi (u^h-u^h_0)|_{\mathbb{R}^m}.\]
Then,
\[  \psi^0(\gamma_\psi u^h;\gamma_\psi u_0^h-\gamma_\psi u^h)
\le \left(c_0+c_1|\gamma_\psi u^h_0|_{\mathbb{R}^m}\right)|\gamma_\psi (u^h-u^h_0)|_{\mathbb{R}^m}
+ \alpha_\psi|\gamma_\psi (u_0^h-u^h)|_{\mathbb{R}^m}^2,\]
and 
\begin{align*}
I_\Delta(\psi^0(\gamma_\psi u^h;\gamma_\psi u_0^h-\gamma_\psi u^h))
&\le I_\Delta(\left(c_0+c_1|\gamma_\psi u^h_0|_{\mathbb{R}^m}\right)|\gamma_\psi (u^h-u^h_0)|_{\mathbb{R}^m})\\
&{}\quad +I_\Delta(\alpha_\psi|\gamma_\psi (u_0^h-u^h)|_{\mathbb{R}^m}^2).
\end{align*}
Consequently, we apply the Cauchy-Schwarz inequality and the assumption $\gamma_\psi\in\cL(V;V_\psi)$
in $H(\psi)$,
\begin{equation}
I_\Delta(\psi^0(\gamma_\psi u^h;\gamma_\psi u_0^h-\gamma_\psi u^h))
\le c\left(1+\|u^h_0\|_{V}\right)\|u_0^h-u^h\|_V +\alpha_\psi c_\Delta^2\|u_0^h-u^h\|_V^2.
\label{3.3cc}
\end{equation}

Use \eqref{4.4a} and \eqref{3.3cc} in \eqref{3.3a} to obtain
\begin{align*}
m_A \|u^h-u^h_0\|_V^2 & \le  \Phi(u^h_0)-c_3-c_4\|u^h\|_{V} +c\left(1+\|u^h_0\|_{V}\right)\|u^h-u^h_0\|_V \\
&\quad{}  +\alpha_\psi c_\Delta^2\|u^h-u^h_0\|_{V}^2 + \langle f- Au^h_0,u^h-u^h_0\rangle,
\end{align*}
which is rewritten as
\begin{align*}
\left(m_A -\alpha_\psi c_\Delta^2\right) \|u^h-u^h_0\|_V^2
& \le  c\left(1+\|u^h_0\|_V\right) \|u^h-u_0\|_V+\Phi(u^h_0)-c_3-c_4\|u^h\|_{V}\\
& \quad{}  + \langle f- Au^h_0,u^h-u^h_0\rangle. 
\end{align*}

The convergence of $\{u^h_0\}_h$ in $V$ implies that $\{\|u^h_0\|_V\}$ and $\{\|Au^h_0\|_{V^*}\}$ are uniformly bounded
with respect to $h$.  By the smallness condition, $m_A-\alpha_\psi c_\Delta^2>0$, we can conclude
from the above inequality that $\|u^h-u^h_0\|_V$ is uniformly bounded in $h$, so is $\|u^h\|_V$. \hfill
\end{proof}

By Aubin-Clarke's Theorem (cf.\ \cite[Theorem 2.61]{CL2021}, we have the next result:

\begin{lemma}\label{lem:int}
Assume $H(\psi)$.  Then, for any $z\in V_\psi$ and any $z^*\in \partial (I_\Delta(\psi(z)))$,
we have $\zeta_z\in V_\psi$ such that $\langle z^*,v\rangle = I_\Delta(\zeta_z v)$ for all $v\in V_\psi$, 
and $\zeta_z \in\partial\psi(z)$ a.e.\ on $\Delta$.
\end{lemma}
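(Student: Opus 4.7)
The plan is to view this lemma as a statement of the Aubin--Clarke theorem for integral functionals, which splits naturally into two pieces: a Riesz representation and a pointwise identification of the density.

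First I would handle the representation. Since $V_\psi=L^2(\Delta;\mathbb{R}^m)$ is a Hilbert space, we identify its dual with itself via the canonical $L^2$ pairing, so any $z^*\in V_\psi^*$ may be written as $\langle z^*,v\rangle = I_\Delta(\zeta_z\cdot v)$ for a unique $\zeta_z\in V_\psi$. This immediately produces the required $\zeta_z$, together with the integral representation for the duality pairing. Nothing about $\psi$ is needed at this step.

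Next I would prove the pointwise inclusion $\zeta_z(\bx)\in\partial\psi(z(\bx))$ a.e.\ on $\Delta$. Introduce the auxiliary functional $F(w):=I_\Delta(\psi(w))$ on $V_\psi$. Under assumption $H(\psi)$, $F$ is well defined and locally Lipschitz (by the growth bound \eqref{eq10w} and Lemma \ref{lem:Ipsi} applied with $V=V_\psi$ and $\gamma_\psi=\mathrm{id}$), so its Clarke subdifferential makes sense. The assumption $z^*\in\partial F(z)$ together with Definition \ref{def:Clarke} gives
\begin{equation*}
F^0(z;v)\;\ge\;\langle z^*,v\rangle\;=\;I_\Delta(\zeta_z\cdot v)\qquad\forall\,v\in V_\psi.
\end{equation*}
The key analytic step is the interchange inequality $F^0(z;v)\le I_\Delta(\psi^0(z;v))$. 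This follows from writing the difference quotients $\lambda^{-1}[\psi(w(\bx)+\lambda v(\bx))-\psi(w(\bx))]$, bounding them uniformly in $L^1$ via the Lipschitz/growth bound on $\partial\psi$, and applying a reverse Fatou lemma to commute $\limsup$ with $I_\Delta$. This is the one genuinely non-trivial ingredient.

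Combining the two inequalities yields $I_\Delta(\psi^0(z;v))\ge I_\Delta(\zeta_z\cdot v)$ for every $v\in V_\psi$. To pass to the pointwise statement, I would localize: for each fixed $w\in\mathbb{R}^m$ and each measurable $E\subset\Delta$, test with $v(\bx)=\chi_E(\bx)w$ to deduce $\psi^0(z(\bx);w)\ge \zeta_z(\bx)\cdot w$ for a.e.\ $\bx\in\Delta$ (with a $w$-dependent null set). Choosing $w$ from a countable dense subset of $\mathbb{R}^m$ and using continuity of $\psi^0(z(\bx);\cdot)$ in its direction argument (Proposition \ref{subdiff}(ii) together with the local Lipschitz bound \eqref{eq10w}) removes the $w$-dependence of the null set, giving $\psi^0(z(\bx);w)\ge \zeta_z(\bx)\cdot w$ for all $w$, a.e.\ $\bx$. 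By the defining relation \eqref{Clarke2} this is exactly $\zeta_z(\bx)\in\partial\psi(z(\bx))$ a.e. The main obstacle throughout is the Fatou-type interchange in the second step; the rest is standard Hilbert-space duality and a measure-theoretic localization.
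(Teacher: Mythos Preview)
The paper does not actually prove this lemma: the sentence immediately preceding it reads ``By Aubin--Clarke's Theorem (cf.\ \cite[Theorem 2.61]{CL2021}), we have the next result,'' and no argument is given. Your proposal is therefore not merely a different route but a genuine proof where the paper supplies only a citation.

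Your sketch is correct and is, in fact, the standard proof of the Aubin--Clarke theorem in this $L^2$ setting. The Riesz step is immediate; the interchange inequality $F^0(z;v)\le I_\Delta(\psi^0(z;v))$ via a reverse Fatou argument (using the growth bound \eqref{eq10w} to dominate the difference quotients) is exactly the crux of the classical argument; and the localization by $v=\chi_E w$ followed by a countable-direction density argument is the usual way to pass from the integral inequality to the pointwise inclusion. One small remark: for the continuity of $w\mapsto\psi^0(z(\bx);w)$ you could cite more directly that $\psi^0(z(\bx);\cdot)$ is finite, positively homogeneous and subadditive (Proposition~\ref{subdiff}(i)--(ii)), hence continuous on $\mathbb{R}^m$; invoking \eqref{eq10w} there is not really needed. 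Otherwise the outline is sound and more informative than the paper's bare citation.
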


We now prove the convergence of the numerical solutions under the minimal solution regularity $u\in K$.
The next result and its proof follow \cite{HZ19}.

\smallskip
\begin{theorem}\label{thm:converge}
Keep the assumptions made in Theorem \ref{thm1}.  Moreover, assume $V^h$ is a finite-dimensional subspace of $V$,
$K^h$ is a non-empty, closed and convex subset of $V^h$, and \eqref{3.3i}--\eqref{3.3h} hold.
Let $u$ and $u^h$ be the solutions of Problem \ref{prob:VHI} and Problem \ref{prob:VHIh}, respectively.  Then,
\begin{equation}
 u^h\to u\quad{\rm in\ }V\ {\rm as\ }h\to 0. \label{4.9a}
\end{equation}
\end{theorem}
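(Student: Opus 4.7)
The plan is to proceed in three stages: extract a weakly convergent subsequence with limit in $K$; identify the weak limit as the unique solution $u$; and then upgrade to strong convergence via the strong monotonicity of $A$. Since the full sequence $\{u^h\}$ is bounded, applying the argument to an arbitrary subsequence yields the desired full-sequence convergence.

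By Proposition~\ref{prop:bd} the sequence $\{u^h\}$ is uniformly bounded in $V$, so any subsequence admits a further subsequence (still denoted $u^h$) with $u^h\weak w$ in $V$ for some $w\in V$; the Mosco condition~\eqref{3.3i} gives $w\in K$. To identify $w=u$, fix $v\in K$ and use~\eqref{3.3h} to pick $v^h\in K^h$ with $v^h\to v$. The strong monotonicity bound $\langle Au^h,u^h-v^h\rangle\ge \langle Av^h,u^h-v^h\rangle+m_A\|u^h-v^h\|_V^2$ substituted into~\eqref{eq1h} yields
\begin{equation*}
m_A\|u^h-v^h\|_V^2+\langle Av^h,u^h-v^h\rangle\le\Phi(v^h)-\Phi(u^h)+I_\Delta(\psi^0(\gamma_\psi u^h;\gamma_\psi v^h-\gamma_\psi u^h))+\langle f,u^h-v^h\rangle.
\end{equation*}
I take the limit superior: Lipschitz continuity of $A$ gives $Av^h\to Av$ strongly; continuity and weak lower semicontinuity of the convex functional $\Phi$ give $\Phi(v^h)\to\Phi(v)$ and $\liminf\Phi(u^h)\ge\Phi(w)$; weak lower semicontinuity of $\|\cdot\|_V^2$ gives $\liminf\|u^h-v^h\|_V^2\ge\|w-v\|_V^2$; and for the hemivariational integral, compactness of the trace/embedding $\gamma_\psi$ (which holds by Rellich--Kondrachov in the contact-mechanics applications of interest) promotes $\gamma_\psi u^h\to\gamma_\psi w$ strongly in $V_\psi$, so Proposition~\ref{subdiff}(iv) together with a reverse-Fatou lemma (with integrable envelope furnished by~\eqref{eq10w} and Cauchy--Schwarz) gives $\limsup I_\Delta(\psi^0(\gamma_\psi u^h;\gamma_\psi v^h-\gamma_\psi u^h))\le I_\Delta(\psi^0(\gamma_\psi w;\gamma_\psi v-\gamma_\psi w))$. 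Combining these produces the Minty-type inequality
\begin{equation*}
\langle Av,v-w\rangle+\Phi(v)-\Phi(w)+I_\Delta(\psi^0(\gamma_\psi w;\gamma_\psi v-\gamma_\psi w))\ge\langle f,v-w\rangle+m_A\|w-v\|_V^2 \quad \forall\,v\in K.
\end{equation*}
Substituting $v=(1-t)w+tz$ with $z\in K$ and $t\in(0,1]$, dividing by $t$, using positive homogeneity $\psi^0(\cdot;tv)=t\psi^0(\cdot;v)$ and the convex bound $\Phi((1-t)w+tz)-\Phi(w)\le t(\Phi(z)-\Phi(w))$, and letting $t\to 0^+$ (with $A((1-t)w+tz)\to Aw$ by Lipschitz continuity) recovers the original VHI~\eqref{eq1} at $w$; by uniqueness (Theorem~\ref{thm1}), $w=u$.

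For strong convergence, I apply the displayed inequality with the choice $v=u$ and $v^h\to u$. All terms on the right now have nonpositive $\limsup$: $\Phi(v^h)-\Phi(u^h)$ because $\Phi(v^h)\to\Phi(u)$ and $\liminf\Phi(u^h)\ge\Phi(u)$; the hemivariational integral because $\gamma_\psi v^h-\gamma_\psi u^h\to 0$ in $V_\psi$ together with~\eqref{eq10w} and Proposition~\ref{subdiff}(iv); and the two remaining inner products because $u^h-v^h\weak 0$ while $Av^h\to Au$ and $f$ is fixed. Hence $m_A\limsup\|u^h-v^h\|_V^2\le 0$, so $u^h-v^h\to 0$ in $V$, and combined with $v^h\to u$ this gives $u^h\to u$ in $V$. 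The main obstacle is precisely controlling the $\limsup$ of $I_\Delta(\psi^0(\gamma_\psi u^h;\gamma_\psi v^h-\gamma_\psi u^h))$: weak convergence in $V$ alone is insufficient, and one crucially exploits compactness of $\gamma_\psi$ to promote it to strong trace convergence, after which the upper semicontinuity of $\psi^0$ and the growth bound~\eqref{eq10w} close the argument.
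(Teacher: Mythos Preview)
Your proposal relies on compactness of $\gamma_\psi$ to promote $\gamma_\psi u^h\weak\gamma_\psi w$ to strong convergence in $V_\psi$, and this is the crux of both your identification step and your strong-convergence step. But the theorem only assumes $\gamma_\psi\in\mathcal{L}(V;V_\psi)$ (cf.\ $H(\psi)$), not compactness. So as written you prove a strictly weaker result than what is stated. The upper-semicontinuity property Proposition~\ref{subdiff}(iv) requires \emph{strong} convergence in both slots, and without compactness you only have weak convergence of $\gamma_\psi u^h$; neither reverse Fatou with the growth bound~\eqref{eq10w} nor any of the other machinery you invoke can substitute for that.

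The paper's proof avoids this entirely by reversing the order of your Steps~2 and~3: it establishes \emph{strong} convergence $u^{h'}\to w$ in $V$ \emph{before} identifying $w$. The key device is to split $\psi^0(\gamma_\psi u^{h'};\gamma_\psi w^{h'}-\gamma_\psi u^{h'})$ via subadditivity into a part controlled by the relaxed-monotonicity condition~\eqref{eq4}, producing $\alpha_\psi c_\Delta^2\|w-u^{h'}\|_V^2$, which is then absorbed into the $m_A\|w-u^{h'}\|_V^2$ on the left thanks to the smallness assumption $\alpha_\psi c_\Delta^2<m_A$. The remaining pieces are handled with weak convergence alone: in particular $-I_\Delta(\psi^0(\gamma_\psi w;\gamma_\psi u^{h'}-\gamma_\psi w))$ is bounded via a fixed subgradient $\xi_w\in V_\psi$ (Lemma~\ref{lem:int}), so that only the weak convergence $\gamma_\psi u^{h'}\weak\gamma_\psi w$ in $V_\psi$ is needed. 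Once strong convergence of $u^{h'}$ is in hand, the continuity of $\gamma_\psi$ gives strong trace convergence for free, and your Fatou/upper-semicontinuity argument for the identification step becomes legitimate. Your Minty linearization is not needed; the paper passes to the limit directly in~\eqref{eq1h} using the strong convergence just established.
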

\begin{proof}
We split the proof into three main steps.  In the first step, we discuss the weak convergence
of the numerical solutions.  In the second step, we prove that the weak convergence
of the numerical solutions can be strengthened to strong convergence.  In the third step, 
we show that the limit of the numerical solutions is the solution $u$  of Problem \ref{prob:VHI}.

\smallskip
\noindent\emph{Step 1}. By Proposition \ref{prop:bd}, $\{u^h\}$ is bounded in $V$.  Since $V$ is reflexive and
$\gamma_\psi\in {\cal L}(V,V_\psi)$, there exist a subsequence $\{u^{h'}\}\subset \{u^h\}$ and an element $w\in V$ such that
\begin{equation}
u^{h'}\weak w\ {\rm in}\ V, \quad \gamma_\psi u^{h'}\weak \gamma_\psi w\ {\rm in}\ V_\psi.
\label{Pr1}
\end{equation}
As a consequence of the assumption \eqref{3.3i}, $w\in K$.

\smallskip
\noindent\emph{Step 2}.  Next, we show that the weak convergence \eqref{Pr1} can be strengthened to the strong convergence:
\begin{equation}
u^{h'}\to w\ {\rm in}\ V.
\label{Pr3}
\end{equation}
By the assumption \eqref{3.3h} and the continuity of the operator $\gamma_\psi$, we have a sequence 
$\{w^{h'}\}\subset V$ with the properties that $w^{h'}\in K^{h'}$ and
\begin{equation}
w^{h'}\to w\ {\rm in}\ V, \quad \gamma_\psi w^{h'}\to \gamma_\psi w\ {\rm in}\ V_\psi.
\label{Pr4}
\end{equation}
Since $A$ is $m_A$-strongly monotone,
\[ m_A\|w-u^{h'}\|_V^2\le \langle Aw-Au^{h'},w-u^{h'}\rangle, \]
or
\begin{equation}
m_A\|w-u^{h'}\|_V^2\le \langle Aw,w-u^{h'}\rangle-\langle Au^{h'},w^{h'}-u^{h'}\rangle
-\langle Au^{h'},w-w^{h'}\rangle.
\label{Pr5}
\end{equation}
We take $v^{h'}=w^{h'}$ in \eqref{eq1h} with $h=h'$ to obtain
\begin{equation}
- \langle Au^{h'}, w^{h'} - u^{h'} \rangle\le \Phi(w^{h'})- \Phi(u^{h'})
+I_\Delta(\psi^0(\gamma_\psi u^{h'};\gamma_\psi w^{h'}-\gamma_\psi u^{h'})) -\langle f,w^{h'}-u^{h'}\rangle.
\label{Pr8}
\end{equation}
By the triangle inequality of the norm,
\[ \|u^{h'}-w^{h'}\|_{V} \le\|u^{h'}-w\|_{V}+\|w-w^{h'}\|_{V}. \]
Apply the sub-additivity property of the generalized directional derivative (Proposition \ref{subdiff}\,(ii)), 
\begin{align}
\psi^0(\gamma_\psi u^{h'};\gamma_\psi w^{h'}-\gamma_\psi u^{h'})
&\le \psi^0(\gamma_\psi u^{h'};\gamma_\psi w-\gamma_\psi u^{h'})
+ \psi^0(\gamma_\psi u^{h'};\gamma_\psi w^{h'}-\gamma_\psi w)\nonumber\\
& =\left[\psi^0(\gamma_\psi u^{h'};\gamma_\psi w-\gamma_\psi u^{h'})
+\psi^0(\gamma_\psi w;\gamma_\psi u^{h'}-\gamma_\psi w)\right] \nonumber\\
& \quad{} + \left[\psi^0(\gamma_\psi u^{h'};\gamma_\psi w^{h'}-\gamma_\psi w)
-\psi^0(\gamma_\psi w;\gamma_\psi u^{h'}-\gamma_\psi w)\right].  \label{Pr31}
\end{align}
By the assumption \eqref{eq4},
\[ \psi^0(\gamma_\psi u^{h'};\gamma_\psi w-\gamma_\psi u^{h'})+\psi^0(\gamma_\psi w;\gamma_\psi u^{h'}-\gamma_\psi w)
\le \alpha_\psi \left|\gamma_\psi ( w-u^{h'})\right|_{\mathbb{R}^m}^2. \]
Then, recalling \eqref{eq12w}, we have
\begin{equation}
I_\Delta(\psi^0(\gamma_\psi u^{h'};\gamma_\psi w-\gamma_\psi u^{h'})+\psi^0(\gamma_\psi w;\gamma_\psi u^{h'}-\gamma_\psi w))
\le \alpha_\psi c_\Delta^2\|w-u^{h'}\|_V^2.
\label{Pr32}
\end{equation}
We use the bounds \eqref{Pr8}, \eqref{Pr31} and \eqref{Pr32} in \eqref{Pr5} to obtain
\begin{align}
\left(m_A -\alpha_\psi c_\Delta^2\right) \|w-u^{h'}\|_V^2
& \le \langle Aw,w-u^{h'}\rangle-\langle Au^{h'},w-w^{h'}\rangle-\langle f,w^{h'}-u^{h'}\rangle\nonumber\\
&\quad{} +\Phi(w^{h'})-\Phi(u^{h'}) \nonumber\\
&\quad{} +I_\Delta(\psi^0(\gamma_\psi u^{h'};\gamma_\psi w^{h'}-\gamma_\psi w)
-\psi^0(\gamma_\psi w;\gamma_\psi u^{h'}-\gamma_\psi w)).
\label{Pr20}
\end{align}

Now consider the limits of the terms on the right side of \eqref{Pr20} as $h'\to 0$.
From the weak convergence \eqref{Pr1},
\[ \langle Aw,w-u^{h'}\rangle\to 0. \]
Since $\{u^{h'}\}$ is bounded and $A$ is continuous, $\{A u^{h'}\}$ is bounded.  
Thus, from the strong convergence \eqref{Pr4},
\[ \langle Au^{h'},w-w^{h'}\rangle\to 0. \]
Write
\[ \langle f,w^{h'}-u^{h'}\rangle= \langle f,w^{h'}-w\rangle+\langle f,w-u^{h'}\rangle.\]
From \eqref{Pr4},
\[ \langle f,w^{h'}-w\rangle\to 0. \]
From \eqref{Pr1},
\[ \langle f,w-u^{h'}\rangle\to 0. \]
Hence,
\[ \langle f,w^{h'}-u^{h'}\rangle \to 0. \]
Since $\Phi$ is continuous, from the convergence \eqref{Pr4},
\[ \Phi(w^{h'})\to \Phi(w). \]
The convexity and continuity of $\Phi$ imply that $\Phi$ is weakly sequentially lower semicontinuous.
Thus, due to the weak convergence \eqref{Pr1},
\[ \limsup_{h'\to 0}\left[-\Phi(u^{h'})\right]=-\liminf_{h'\to 0}\Phi(u^{h'})\le -\Phi(w). \]
By $H(\psi)$,
\[ I_\Delta(\psi^0(\gamma_\psi u^{h'};\gamma_\psi w^{h'}-\gamma_\psi w))
\le I_\Delta\!\left(c\left(1+|\gamma_\psi u^{h'}|_{\mathbb{R}^m}\right)|\gamma_\psi w^{h'}-\gamma_\psi w|_{\mathbb{R}^m}\right).\]
Since $\{\gamma_\psi u^{h'}\}$ is bounded in $V_\psi$, by \eqref{Pr4},
\[ \limsup_{h'\to 0}I_\Delta(\psi^0(\gamma_\psi u^{h'};\gamma_\psi w^{h'}-\gamma_\psi w))
\le \limsup_{h'\to 0} c\left(1+\|u^{h'}\|_V\right)\|w^{h'}-w\|_V = 0. \]

Apply Lemma \ref{lem:int} with $\xi_w(\bx)\in \partial \psi(\gamma_\psi w(\bx))$ for a.e.\ $\bx\in\Delta$, and note that
$\xi_w\in V_\psi$.  From the definition of the generalized directional derivative,
\[ \psi^0(\gamma_\psi w(\bx);\gamma_\psi u^{h'}(\bx)-\gamma_\psi w(\bx)) \ge
\langle \xi_w(\bx),\gamma_\psi u^{h'}(\bx)-\gamma_\psi w(\bx)\rangle\]
for a.e.\ $\bx\in\Delta$.  Then,
\[ -I_\Delta(\psi^0(\gamma_\psi w;\gamma_\psi u^{h'}-\gamma_\psi w))
\le -\langle \xi_w,\gamma_\psi u^{h'}-\gamma_\psi w\rangle.\]
Note that as $h'\to 0$,
\[ \langle \xi_w,\gamma_\psi u^{h'}-\gamma_\psi w\rangle \to 0. \]
Hence,
\[ \limsup_{h'\to 0}\left[-I_\Delta(\psi^0(\gamma_\psi w;\gamma_\psi u^{h'}-\gamma_\psi w))\right] \le 0. \]

Summarizing, we take the upper limit of both sides of \eqref{Pr20} as $h'\to0$ to conclude that
\[ \limsup_{h'\to 0}\|w-u^{h'}\|_V^2\le 0. \]
In other words, we have the strong convergence \eqref{Pr3}.

\smallskip
\noindent\emph{Step 3}.  In the last step, we show that the limit $w$ is the unique solution of Problem \ref{prob:VHI}.
Fix an arbitrary element $v\in K$.  By \eqref{3.3h}, we can find a sequence $\{v^{h'}\}\subset V$,
$v^{h'}\in K^{h'}$,  such that
$v^{h'}\to v$ in $V$ and $\gamma_\psi v^{h'}\to \gamma_\psi v$ in $V_\psi$.  By \eqref{eq1h} with $h=h'$,
\begin{equation}
\langle Au^{h'}, v^{h'} - u^{h'} \rangle  + \Phi(v^{h'})- \Phi(u^{h'})
+I_\Delta(\psi^0(\gamma_\psi u^{h'};\gamma_\psi v^{h'}-\gamma_\psi u^{h'}))\ge\langle f,v^{h'}-u^{h'}\rangle.
\label{Pr41}
\end{equation}
As $h'\to 0$,
\begin{equation}
\langle Au^{h'}, v^{h'} - u^{h'} \rangle\to \langle Aw, v-w\rangle,\quad
\langle f,v^{h'}-u^{h'}\rangle\to \langle f,v-w\rangle,
\label{Pr42}
\end{equation}
where the continuity of $A$ is used.  Moreover, by the continuity of $\Phi$,
\begin{equation}
\Phi(v^{h'})\to \Phi(v), \quad \Phi(u^{h'}) \to \Phi(w).
\label{5.21a}
\end{equation}
Note that $\gamma_\psi u^{h'}\to \gamma_\psi w$ and $\gamma_\psi v^{h'}\to \gamma_\psi v$ a.e.\ in $\Delta$.  So
\begin{equation}
I_\Delta(\psi^0(\gamma_\psi w;\gamma_\psi v-\gamma_\psi w))\ge
\limsup_{h'\to 0} I_\Delta(\psi^0(\gamma_\psi u^{h'};\gamma_\psi v^{h'}-\gamma_\psi u^{h'})).
\label{Pr45}
\end{equation}
Taking the upper limit $h'\to 0$ in \eqref{Pr41} and making use of the relations
\eqref{Pr42}--\eqref{Pr45}, we obtain
\[ \langle Aw,v-w\rangle+\Phi(v)-\Phi(w)+I_\Delta(\psi^0(\gamma_\psi w;\gamma_\psi v-\gamma_\psi w))\ge\langle f,v-u\rangle. \]
Note that the element $v\in K$ is arbitrary. This shows that $w$ is a solution of Problem \ref{prob:VHI}.   
Due to the uniqueness of a solution of Problem \ref{prob:VHI}, $w=u$.  Furthermore, since the limit $u$ does not depend on the subsequence,
the entire family of the numerical solutions converges, i.e., \eqref{4.9a} holds.  \hfill
\end{proof}

The convergence result in Theorem \ref{thm:converge} is rather general, and here we consider two special cases.

First, we consider the case of a hemivariational inequality with the choice $\Phi\equiv 0$ in Problem \ref{prob:VHI}.

\begin{problem}\label{P21}
{\it Find an element  $u \in K$ such that}
\begin{equation}
\langle Au,v-u\rangle+I_\Delta(\psi^0(\gamma_\psi u;\gamma_\psi v-\gamma_\psi u))\ge\langle f,v-u\rangle\quad\forall\,v\in K.
\label{hv2}
\end{equation}
\end{problem}

The corresponding numerical method Problem \ref{prob:VHIh} takes the following form.

\begin{problem} \label{P2h}
{\it Find an element $u^h\in K^h$ such that}
\begin{equation}
\langle Au^h,v^h-u^h\rangle+I_\Delta(\psi^0(\gamma_\psi u^h;\gamma_\psi v^h-\gamma_\psi u^h))
\ge\langle f,v^h-u^h\rangle\quad\forall\,v^h\in K^h.
\label{hvh2}
\end{equation}
\end{problem}

\begin{theorem}\label{thm:converge2}
Assume $H(V)$, $H(K)$, $H(A)$, $H(\psi)$,  $H(f)$, and $\alpha_\psi c_\Delta^2<m_A$.
Moreover, assume $V^h$ is a finite-dimensional subspace of $V$, $K^h$ is a non-empty, closed and convex subset
of $V^h$, and \eqref{3.3i}--\eqref{3.3h} hold.  Let $u$ and $u^h$ be the solutions of Problem \ref{P21} and
Problem \ref{P2h}, respectively. Then we have the convergence:
\[ u^h\to u\quad{\rm in\ }V\ {\rm as\ }h\to 0. \]
\end{theorem}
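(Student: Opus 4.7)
The plan is to derive Theorem \ref{thm:converge2} as an immediate specialization of Theorem \ref{thm:converge} to the choice $\Phi \equiv 0$. The zero functional is trivially convex and continuous on $V$, so the hypothesis $H(\Phi)$ is automatically met. Under this choice, the terms $\Phi(v)-\Phi(u)$ in \eqref{eq1} and $\Phi(v^h)-\Phi(u^h)$ in \eqref{eq1h} vanish, so Problem \ref{P21} and Problem \ref{P2h} are literally Problem \ref{prob:VHI} and Problem \ref{prob:VHIh}, respectively. In particular, well-posedness of the continuous and discrete problems is already provided by Theorem \ref{thm1} and Theorem \ref{thm1h}, so $u$ and $u^h$ in the statement are unambiguously defined.

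All the remaining hypotheses listed in Theorem \ref{thm:converge2} — $H(V)$, $H(K)$, $H(A)$, $H(\psi)$, $H(f)$, the smallness condition $\alpha_\psi c_\Delta^2 < m_A$, the existence of a finite-dimensional subspace $V^h$ with a non-empty closed convex subset $K^h$, and the Mosco conditions \eqref{3.3i}--\eqref{3.3h} — match verbatim the hypotheses of Theorem \ref{thm:converge}. Invoking that theorem therefore yields the desired convergence $u^h \to u$ in $V$ as $h \to 0$, and the argument is complete in one line. I do not expect any genuine obstacle: the only places where $\Phi$ entered non-trivially in the earlier argument — the lower bound \eqref{Phi:lb} used in Proposition \ref{prop:bd}, and the continuity plus weak lower semicontinuity of $\Phi$ invoked in Steps 2 and 3 of the proof of Theorem \ref{thm:converge} — all reduce to trivialities when $\Phi \equiv 0$ (one may simply take $c_3 = c_4 = 0$ in \eqref{Phi:lb}, and $\Phi(w^{h'}) \to \Phi(w)$, $\liminf \Phi(u^{h'}) \ge \Phi(w)$ hold automatically).
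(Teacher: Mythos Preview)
Your proposal is correct and matches the paper's own treatment: the paper presents Theorem~\ref{thm:converge2} immediately after Theorem~\ref{thm:converge} without a separate proof, having already noted that Problem~\ref{P21} arises from Problem~\ref{prob:VHI} by the choice $\Phi\equiv 0$. Your verification that $H(\Phi)$ is trivially satisfied for the zero functional, together with the observation that the remaining hypotheses match verbatim, is exactly the intended one-line specialization.
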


As another particular case, we consider a variational inequality, obtained from Problem \ref{prob:VHI}
by setting $\psi\equiv 0$.

\begin{problem}\label{P4}
{\it Find an element  $u \in K$ such that}
\begin{equation}
\langle Au,v-u\rangle+\Phi(v)-\Phi(u)\ge\langle f,v-u\rangle\quad\forall\,v\in K.
\label{hv4}
\end{equation}
\end{problem}

The numerical method is the following.

\begin{problem} \label{P4h}
{\it Find an element $u^h\in K^h$ such that}
\begin{equation}
\langle Au^h, v^h - u^h \rangle + \Phi(v^h)- \Phi(u^h)\ge\langle f,v^h-u^h\rangle\quad\forall\,v^h\in K^h.
\label{hvh4}
\end{equation}
\end{problem}

\begin{theorem}\label{thm:converge4}
Assume $H(V)$, $H(K)$, $H(A)$, $H(\Phi)$, and $H(f)$.  Moreover, assume $V^h$ is a finite-dimensional subspace
of $V$, $K^h$ is a non-empty, closed and convex subset of $V^h$, and \eqref{3.3i}--\eqref{3.3h} hold.
Let $u$ and $u^h$ be the solutions of Problem \ref{P4} and Problem \ref{P4h}, respectively.
Then we have the convergence:
\[ u^h\to u\quad{\rm in\ }V\ {\rm as\ }h\to 0. \]
\end{theorem}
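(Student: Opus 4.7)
The natural plan is to derive Theorem \ref{thm:converge4} as a direct specialization of Theorem \ref{thm:converge} by taking $\psi\equiv 0$. Under this choice, the locally Lipschitz function $\psi$ trivially satisfies $H(\psi)$ with $c_\psi=0$ and $\alpha_\psi=0$, so the smallness condition $\alpha_\psi c_\Delta^2<m_A$ reduces to $0<m_A$, which is built into $H(A)$. With $\psi\equiv 0$, the hemivariational terms $I_\Delta(\psi^0(\gamma_\psi u;\gamma_\psi v-\gamma_\psi u))$ vanish in both Problem \ref{prob:VHI} and Problem \ref{prob:VHIh}, so \eqref{eq1} collapses to \eqref{hv4} and \eqref{eq1h} collapses to \eqref{hvh4}. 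Thus the hypotheses of Theorem \ref{thm:converge} are satisfied and its conclusion $u^h\to u$ in $V$ is immediate.

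If one instead prefers a self-contained argument, I would mimic the three-step structure of Theorem \ref{thm:converge}, but each step becomes noticeably simpler in the absence of $\psi$. First, I would establish uniform boundedness of $\{u^h\}$: pick $u_0\in K$, use \eqref{3.3h} to produce $u^h_0\in K^h$ with $u^h_0\to u_0$, test \eqref{hvh4} with $v^h=u^h_0$, and apply $m_A$-strong monotonicity together with the lower bound \eqref{Phi:lb} on $\Phi$; no estimate like \eqref{3.3cc} is needed. Passing to a subsequence, extract $u^{h'}\weak w$ in $V$, and use \eqref{3.3i} to conclude $w\in K$.

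Second, I would upgrade weak to strong convergence. Choose $w^{h'}\in K^{h'}$ with $w^{h'}\to w$, and from strong monotonicity write
\[
m_A\|w-u^{h'}\|_V^2\le \langle Aw,w-u^{h'}\rangle-\langle Au^{h'},w^{h'}-u^{h'}\rangle-\langle Au^{h'},w-w^{h'}\rangle.
\]
Testing \eqref{hvh4} with $v^{h'}=w^{h'}$ bounds $-\langle Au^{h'},w^{h'}-u^{h'}\rangle$ by $\Phi(w^{h'})-\Phi(u^{h'})-\langle f,w^{h'}-u^{h'}\rangle$. Taking $\limsup$ as $h'\to 0$, the first and third terms tend to zero by weak/strong convergence and boundedness of $\{Au^{h'}\}$, $\Phi(w^{h'})\to \Phi(w)$ by continuity, and $\limsup(-\Phi(u^{h'}))\le -\Phi(w)$ by the weak lower semicontinuity of the convex continuous functional $\Phi$. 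This yields $\limsup\|w-u^{h'}\|_V^2\le 0$, i.e., $u^{h'}\to w$.

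Third, I would pass to the limit in \eqref{hvh4}. For arbitrary $v\in K$, take $v^{h'}\in K^{h'}$ with $v^{h'}\to v$ guaranteed by \eqref{3.3h}; then continuity of $A$, $\Phi$, and the strong convergence of $u^{h'}$ give
\[
\langle Aw,v-w\rangle+\Phi(v)-\Phi(w)\ge\langle f,v-w\rangle,
\]
so $w$ solves Problem \ref{P4}. Uniqueness of the solution (which follows from Theorem \ref{thm1} with $\psi\equiv 0$, or directly from the strong monotonicity of $A$ and convexity of $\Phi$) forces $w=u$ and shows the entire family, not just the subsequence, converges. The only place where mild care is needed is the $\limsup$ argument on $-\Phi(u^{h'})$, which relies on the fact that under $H(\Phi)$ a convex continuous real-valued functional on a Hilbert space is weakly sequentially lower semicontinuous; this is where the VHI proof's delicate handling of the Clarke term is replaced by a purely convex-analytic step.
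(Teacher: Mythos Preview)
Your proposal is correct and matches the paper's approach: the paper presents Theorem~\ref{thm:converge4} simply as the special case $\psi\equiv 0$ of Theorem~\ref{thm:converge} (noting also that it coincides with a known textbook result), and your first paragraph reproduces exactly this specialization argument. Your optional self-contained sketch is also valid and is just the proof of Theorem~\ref{thm:converge} with all $\psi$-related terms dropped.
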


We comment that this result is Theorem 11.4.1 in \cite{AH2009}.

\subsection{Error estimation}\label{sec:est}

We now turn to the derivation of error estimates for the numerical solution defined by
Problem \ref{prob:VHIh} for the approximation of the solution of Problem \ref{prob:VHI}. For this purpose,
we do not assume \eqref{3.3i} and \eqref{3.3h}.  Recall that we use $L_A$ and $m_A$ for the
Lipschitz constant and the strong monotonicity constant of the operator $A\colon V\to V^*$.

Let $v\in K$ and $v^h\in K^h$ be arbitrary.  By the strong monotonicity of $A$,
\[ m_A\|u-u^h\|_V^2\le\langle Au-Au^h,u-u^h\rangle, \]
which is rewritten as
\begin{align}
m_A\|u-u^h\|_V^2&\le\langle Au-Au^h,u-v^h\rangle+\langle Au,v^h-u\rangle+\langle Au, v-u^h\rangle
\nonumber\\
&{}\quad +\langle Au, u-v\rangle + \langle Au^h, u^h- v^h \rangle. \label{err3}
\end{align}
Applying \eqref{eq1},
\[ \langle Au,u-v\rangle\le\Phi(v)-\Phi(u)+I_\Delta(\psi^0(\gamma_\psi u;\gamma_\psi v-\gamma_\psi u))-\langle f,v-u\rangle. \]
Applying \eqref{eq1h},
\[ \langle Au^h, u^h-v^h\rangle \le \Phi(v^h)-\Phi(u^h)
+ I_\Delta(\psi^0(\gamma_\psi u^h;\gamma_\psi v^h-\gamma_\psi u^h))-\langle f,v^h-u^h\rangle. \]
Using these inequalities in \eqref{err3}, after some rearrangement of the terms, we have
\begin{equation}
m_A\|u-u^h\|_V^2 \le\langle Au-Au^h,u-v^h\rangle+R_u(v^h,u)+R_u(v,u^h)+I_\psi(v,v^h),
\label{err7}
\end{equation}
where
\begin{align}
R_u(v,w)&:=\langle Au,v-w\rangle+\Phi(v) -\Phi(w) + I_\Delta(\psi^0(\gamma_\psi u;\gamma_\psi v-\gamma_\psi w))
-\langle f,v-w\rangle, \label{err8} \\[1mm]
I_\psi(v,v^h)&:=I_\Delta(\psi^0(\gamma_\psi u;\gamma_\psi v-\gamma_\psi u)
+\psi^0(\gamma_\psi u^h;\gamma_\psi v^h-\gamma_\psi u^h))\nonumber\\
&{}\quad\ -I_\Delta(\psi^0(\gamma_\psi u;\gamma_\psi v^h-\gamma_\psi u)+\psi^0(\gamma_\psi u;\gamma_\psi v-\gamma_\psi u^h)).\label{err10}
\end{align}
Let us bound the first and the last two terms on the right hand side of \eqref{err7}.  First,
\[ \langle Au- Au^h, u- v^h\rangle \le L_A\|u-u^h\|_V \|u-v^h\|_V.\]
By the modified Cauchy-Schwarz inequality \eqref{mCS}, for any $\epsilon>0$ arbitrarily small,
\begin{equation}
\langle Au- Au^h, u- v^h\rangle \le \epsilon\,\|u-u^h\|_V^2+c\,\|u-v^h\|_V^2  \label{err7a}
\end{equation}
for some constant $c$ depending on $\epsilon$.  Applying the subadditivity of the generalized directional derivative,
\[ \psi^0(z;z_1+z_2)\le \psi^0(z;z_1)+\psi^0(z;z_2)\quad \forall\,z,z_1,z_2\in \mathbb{R}^m, \]
we have
\begin{align*}
\psi^0(\gamma_\psi u;\gamma_\psi v-\gamma_\psi u) &\le \psi^0(\gamma_\psi u;\gamma_\psi v-\gamma_\psi u^h)
  +\psi^0(\gamma_\psi u;\gamma_\psi u^h-\gamma_\psi u),\\
\psi^0(\gamma_\psi u^h;\gamma_\psi v^h-\gamma_\psi u^h) &\le \psi^0(\gamma_\psi u^h;\gamma_\psi v^h-\gamma_\psi u)
  +\psi^0(\gamma_\psi u^h;\gamma_\psi u-\gamma_\psi u^h).
\end{align*}
Thus,
\begin{align*}
I_\psi(v,v^h)&\le I_\Delta(\psi^0(\gamma_\psi u^h;\gamma_\psi v^h-\gamma_\psi u)
-\psi^0(\gamma_\psi u;\gamma_\psi v^h-\gamma_\psi u))\\
&{}\quad +I_\Delta(\psi^0(\gamma_\psi u;\gamma_\psi u^h-\gamma_\psi u)
+\psi^0(\gamma_\psi u^h;\gamma_\psi u-\gamma_\psi u^h)).
\end{align*}
By \eqref{eq4} and \eqref{eq12w},
\begin{align*}
I_\Delta(\psi^0(\gamma_\psi u;\gamma_\psi u^h-\gamma_\psi u)+\psi^0(\gamma_\psi u^h;\gamma_\psi u-\gamma_\psi u^h))
&\le \alpha_\psi I_\Delta(|\gamma_\psi u-\gamma_\psi u^h|^2)\\
&\le \alpha_\psi c_\Delta^2 \|u- u^h\|_{V}^2.
\end{align*}
Moreover, by \eqref{3.32a},
\begin{align*}
\left|I_\Delta(\psi^0(\gamma_\psi u^h;\gamma_\psi v^h-\gamma_\psi u))\right|&\le c\left(1+\|u^h\|_V\right)\|v^h-u\|_V,\\
\left|I_\Delta(\psi^0(\gamma_\psi u;\gamma_\psi v^h-\gamma_\psi u))\right| &\le c\left(1+\|u\|_V\right)\|v^h-u\|_V.
\end{align*}
Combining the above four inequalities and using the fact that $\{\|u^h\|_{V}\}$ is
bounded independent of $h$ (cf.\ Proposition \ref{prop:bd}), we find that
\begin{equation}
I_\psi(v,v^h)\le \alpha_\psi\|\gamma_\psi u-\gamma_\psi u^h\|_{V_\psi}^2+c\,\|\gamma_\psi u-\gamma_\psi v^h\|_{V_\psi}
\label{err13}
\end{equation}
for some constant $c>0$ independent of $h$. Using \eqref{err7a} and \eqref{err13} in \eqref{err7}, we have
\[  \left(m_A-\alpha_\psi c_\Delta^2-\epsilon\right)\|u-u^h\|_V^2
 \le c\,\|u-v^h\|_V^2+c\,\|\gamma_\psi u-\gamma_\psi v^h\|_{V_\psi} +R_u(v^h,u)+R_u(v,u^h). \]
Recall the smallness assumption $\alpha_\psi c_\Delta^2<m_A$.  We can choose
$\epsilon=(m_A-\alpha_\psi c_\Delta^2)/2>0$ and get the inequality
\[ \|u-u^h\|_V^2\le c\inf_{v^h\in K^h}\left[\|u-v^h\|_V^2+\|\gamma_\psi (u-v^h)\|_{V_\psi}+R_u(v^h,u)\right]
+c\inf_{v\in K}R_u(v,u^h). \]

We summarize the result in the form of a theorem.

\begin{theorem}\label{thm:num_1}
Assume $H(K)$, $H(A)$, $H(\Phi)$, $H(\psi)$,  $H(f)$, and $\alpha_\psi c_\Delta^2<m_A$.
Then for the solution $u$ of Problem \ref{prob:VHI} and the solution $u^h$ of Problem \ref{prob:VHIh},
we have the C\'{e}a-type inequality
\begin{equation}
\|u-u^h\|_V^2\le c\inf_{v^h\in K^h}\left[\|u-v^h\|_V^2+\|\gamma_\psi (u-v^h)\|_{V_\psi}+R_u(v^h,u)\right]
+c\inf_{v\in K} R_u(v,u^h). \label{err16}
\end{equation}
\end{theorem}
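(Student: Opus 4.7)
The plan is to start from the strong monotonicity of $A$, namely
\[ m_A\|u-u^h\|_V^2\le\langle Au-Au^h,u-u^h\rangle, \]
and then insert arbitrary test elements $v\in K$ and $v^h\in K^h$ by adding and subtracting the cross terms $\langle Au,v^h-u\rangle$, $\langle Au,v-u^h\rangle$, $\langle Au,u-v\rangle$, $\langle Au^h,u^h-v^h\rangle$. This produces an identity of the shape \eqref{err3}, which isolates one term, $\langle Au-Au^h,u-v^h\rangle$, that we can control by the Lipschitz continuity of $A$, together with two genuine ``consistency'' contributions coming from plugging $v\in K$ into the continuous VHI \eqref{eq1} and $v^h\in K^h$ into the discrete VHI \eqref{eq1h}. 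Carrying out this substitution and rearranging, the residual functionals $R_u(v^h,u)$ and $R_u(v,u^h)$ from \eqref{err8} appear naturally, leaving a non-trivial combination $I_\psi(v,v^h)$ of generalized directional derivatives as defined in \eqref{err10}.

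Next I would bound the three right-hand ingredients separately. For $\langle Au-Au^h,u-v^h\rangle$, the Lipschitz bound \eqref{eq:5.3b} combined with the modified Cauchy-Schwarz inequality \eqref{mCS} gives a term of the form $\epsilon\|u-u^h\|_V^2+c(\epsilon)\|u-v^h\|_V^2$, whose first piece will be absorbed on the left later. The main obstacle is the estimate of $I_\psi(v,v^h)$, which involves four generalized directional derivatives coupled in a nontrivial way. The key idea is to apply subadditivity (Proposition \ref{subdiff}(ii)) to $\psi^0(\gamma_\psi u;\gamma_\psi v-\gamma_\psi u)$ and $\psi^0(\gamma_\psi u^h;\gamma_\psi v^h-\gamma_\psi u^h)$, splitting each via the intermediate points $\gamma_\psi u^h$ and $\gamma_\psi u$ respectively, so that after cancellation we isolate the symmetric combination $\psi^0(\gamma_\psi u;\gamma_\psi u^h-\gamma_\psi u)+\psi^0(\gamma_\psi u^h;\gamma_\psi u-\gamma_\psi u^h)$ controlled by the relaxed monotonicity \eqref{eq4} (yielding an $\alpha_\psi c_\Delta^2\|u-u^h\|_V^2$ contribution via \eqref{eq12w}) and a remainder of the form $\psi^0(\gamma_\psi u^h;\gamma_\psi v^h-\gamma_\psi u)-\psi^0(\gamma_\psi u;\gamma_\psi v^h-\gamma_\psi u)$ which I would dominate via Lemma \ref{lem:psi2}. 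The uniform bound $\|u^h\|_V\le M$ from Proposition \ref{prop:bd} is crucial here to keep the constant in \eqref{err13} independent of $h$.

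Combining these bounds yields
\[ (m_A-\alpha_\psi c_\Delta^2-\epsilon)\|u-u^h\|_V^2\le c\,\|u-v^h\|_V^2+c\,\|\gamma_\psi(u-v^h)\|_{V_\psi}+R_u(v^h,u)+R_u(v,u^h). \]
Invoking the smallness assumption $\alpha_\psi c_\Delta^2<m_A$, I choose $\epsilon=(m_A-\alpha_\psi c_\Delta^2)/2$ so that the left-hand coefficient is strictly positive and can be divided out. Since $v\in K$ and $v^h\in K^h$ were arbitrary, taking the infima over $K^h$ and $K$ independently on the two groups of terms delivers the C\'{e}a-type inequality \eqref{err16}. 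The main subtlety throughout is the careful handling of the four-term combination in $I_\psi(v,v^h)$; once the right subadditive splitting is written down, the relaxed monotonicity and the growth bound \eqref{eq10w} do the rest of the work.
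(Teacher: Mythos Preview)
Your proposal is correct and follows essentially the same approach as the paper's own argument: the same decomposition \eqref{err3} from strong monotonicity, the same substitution of the continuous and discrete VHIs to produce $R_u(v^h,u)+R_u(v,u^h)+I_\psi(v,v^h)$, and the same treatment of $I_\psi$ via subadditivity, \eqref{eq4}--\eqref{eq12w}, Lemma~\ref{lem:psi2}, and the uniform bound from Proposition~\ref{prop:bd}. The choice $\epsilon=(m_A-\alpha_\psi c_\Delta^2)/2$ and the final passage to infima also match exactly.
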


For internal approximations, $K^h\subset K$ and then
\[ \inf_{v\in K} R_u(v,u^h) =0. \]
So for internal approximations, the C\'{e}a-type inequality \eqref{err16} simplifies to
\begin{equation}
\|u-u^h\|_V^2 \le  c\inf_{v^h\in K^h}\left[\|u-v^h\|_V^2 +\|\gamma_\psi (u-v^h)\|_{V_\psi}+R_u(v^h,u)\right].
\label{err16.aa}
\end{equation}

We also remark that in the literature on error analysis of numerical solutions of variational inequalities,
it is standard that the C\'{e}a-type inequalities involve square root of approximation error of the
solution in certain norms due to the inequality form of the problems; cf.\ \cite{Fa74, KO1988, HS2002}.

To proceed further, we need to bound the residual term \eqref{err8} and this depends on the problem to be solved.

\section{Studies of the contact problems}\label{sec:contact}

In this section, we take Problem~\ref{prob:cont1} as an example for detailed theoretical studies.   We first explore the solution existence and uniqueness, then introduce a
linear finite element method to solve the problem and derive an optimal order error estimate under certain
solution regularity assumptions.  Finally, we present numerical simulation results for solving some contact problems.

\subsection{Studies of Problem~\ref{prob:cont1}}\label{subsec:cont1}

We start with an existence and uniqueness result for Problem~\ref{prob:cont1}.

\begin{theorem}\label{t21}
Assume \eqref{Ass:E}, \eqref{Ass:f}, \eqref{psi_nu}, $f_b\ge 0$, and
\begin{equation}\label{8sma}
\alpha_{\psi_\nu} \lambda_{\nu}^{-1} < m_{{\cal E}}.
\end{equation}
Then Problem \ref{prob:cont1} has a unique solution.
\end{theorem}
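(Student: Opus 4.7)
The plan is to recognize Problem~\ref{prob:cont1} as an instance of the abstract Problem~\ref{prob:VHI} and apply Theorem~\ref{thm1}. First I would set up the identification: take $V = \bV$ with the inner product \eqref{3.3} and norm \eqref{3.3n}, $K = \bV$ (so the problem is unconstrained), $\Delta = \Gamma_C$, $m = 1$ so that $V_\psi = L^2(\Gamma_C)$, and $\gamma_\psi \bv = v_\nu$ (normal trace). The abstract ingredients are then
\[
\langle A\bu, \bv\rangle := ({\cal E}(\bvarepsilon(\bu)), \bvarepsilon(\bv))_\mathbb{Q},\qquad \Phi(\bv) := I_{\Gamma_C}(f_b |\bv_\tau|),\qquad \psi := \psi_\nu,
\]
with $f \in \bV^*$ defined through $\fb_0$ and $\fb_2$ in the usual way via the trace theorem.

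Next I would verify the hypotheses of Theorem~\ref{thm1} one by one. $H(V)$ follows from Korn's inequality \eqref{Korn}; $H(K)$ is trivial. For $H(A)$, the Lipschitz constant $L_A = L_{{\cal E}}$ and strong monotonicity constant $m_A = m_{{\cal E}}$ follow directly from \eqref{Ass:E}(a) and (b) together with the definition \eqref{3.3}. $H(\Phi)$ holds because $\bw \mapsto |\bw|$ is convex and continuous on $L^2(\Gamma_C;\mathbb{R}^d)$, $f_b \ge 0$ is constant, and the tangential trace $\bv \mapsto \bv_\tau$ is continuous from $\bV$ into $L^2(\Gamma_C;\mathbb{R}^d)$ by \eqref{tr_tau}. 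For $H(\psi)$, the normal trace lies in $\mathcal{L}(\bV, L^2(\Gamma_C))$ by \eqref{tr_nu}, and the three conditions on $\psi_\nu$ translate directly into \eqref{psi_nu}(a)--(c), noting that \eqref{psi_nu}(b) gives \eqref{eq10w} with constants $c_\psi = \max(\bar{c}_0, \bar{c}_1)$ and \eqref{psi_nu}(c) gives \eqref{eq4} with $\alpha_\psi = \alpha_{\psi_\nu}$. Finally $H(f)$ is immediate from \eqref{Ass:f} and the trace theorem.

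It then remains to match the smallness condition. From \eqref{tr_nu} the optimal constant in \eqref{eq12w} satisfies $c_\Delta^2 = \lambda_\nu^{-1}$, so the abstract hypothesis $\alpha_\psi c_\Delta^2 < m_A$ reads $\alpha_{\psi_\nu} \lambda_\nu^{-1} < m_{{\cal E}}$, which is exactly \eqref{8sma}. Theorem~\ref{thm1} then yields a unique $\bu \in \bV$ solving Problem~\ref{prob:cont1}.

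There is no serious obstacle; the proof is a routine verification of the abstract framework. The only point requiring a moment of care is the splitting of the two nonsmooth contributions on $\Gamma_C$: the friction term $I_{\Gamma_C}(f_b|\bv_\tau|)$ must be placed inside the convex functional $\Phi$ rather than inside the generalized directional derivative, while the (possibly nonconvex) normal compliance term generated by $\psi_\nu$ enters through $I_\Delta(\psi^0(\gamma_\psi \bu; \gamma_\psi\bv - \gamma_\psi\bu))$. Once this bookkeeping is correctly set up, all remaining verifications are one-line consequences of the assumptions stated in the theorem.
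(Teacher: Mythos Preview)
Your proposal is correct and follows essentially the same approach as the paper: both identify Problem~\ref{prob:cont1} as an instance of the abstract Problem~\ref{prob:VHI} with $V=K=\bV$, $\Delta=\Gamma_C$, $\gamma_\psi\bv=v_\nu$, $\psi=\psi_\nu$, and $\Phi(\bv)=I_{\Gamma_C}(f_b|\bv_\tau|)$, then verify $H(V)$, $H(K)$, $H(A)$ with $m_A=m_{\cal E}$, $H(\Phi)$, $H(\psi)$, $H(f)$, and match the smallness condition via $c_\Delta^2=\lambda_\nu^{-1}$ before invoking Theorem~\ref{thm1}. Your remark about the correct bookkeeping---placing the Tresca friction term in $\Phi$ and the normal compliance term in the generalized directional derivative---is exactly the point.
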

\begin{proof}
We apply Theorem~\ref{thm1} by placing Problem \ref{prob:cont1} in the framework of Problem \ref{prob:VHI}
with the following choices of the data: the space $V$ and the set $K$ are both the space $\bV$ defined 
in \eqref{SpV}, $\Delta=\Gamma_C$, the space $V_\psi$ of \eqref{SpVpsi} is $V_\psi=L^2(\Gamma_C)$, 
$\gamma_\psi\colon V\to V_\psi$ is the normal component trace operator, $\psi=\psi_\nu$,
and $A\colon \bV \to \bV^*$, $\Phi\colon \bV\to\mathbb{R}$, $f=\fb$ are defined by
\begin{align}
& \langle A\bu,\bv\rangle =\int_{\Omega}{\cal E}\bvarepsilon(\bu):\bvarepsilon(\bv)\,dx,\quad \bu,\bv\in \bV,
\label{8b1}\\[1mm]
& \Phi(\bv)=\int_{\Gamma_C}f_b\,|\bv_\tau|\,ds,\quad \bv\in \bV,
\label{8b3}\\[1mm]
& \langle\fb,\bv\rangle =\int_{\Omega}\fb_0\cdot\bv\,dx+\int_{\Gamma_N}\fb_2\cdot\bv\,ds,\quad \bv\in\bV.
\label{8ef}
\end{align}

Let us examine the assumptions stated in Theorem~\ref{thm1}.  The assumptions $H(V)$ and $H(K)$ are
the same and they are obviously true.  For the operator $A$ defined by \eqref{8b1}, we claim that $H(A)$
holds true with $L_A=L_{\cal E}$ and $m_A=m_{\mathcal F}$. Indeed, for $\bu,\bv,\bw\in V$,
by assumption (\ref{Ass:E})\,(a), we have
\[ \langle A\bu-A\bv,\bw\rangle \le ({\cal E}\bvarepsilon(\bu)-{\cal E}\bvarepsilon(\bv),
\bvarepsilon(\bw))_{\mathbb{Q}} \leq L_{\cal E}\|\bu-\bv\|_{\boldsymbol V}\|\bw\|_{\boldsymbol V}.\]
Thus,
\[ \|A\bu-A\bv\|_{{\boldsymbol V}^*}\le L_{\cal E}\|\bu-\bv\|_{\boldsymbol V}\quad\forall\,\bu,\bv\in\bV. \]
This shows that $A$ is Lipschitz continuous. Moreover,
\[\langle A\bu-A\bv,\bu-\bv\rangle=({\cal E}\bvarepsilon(\bu)-{\cal E}\bvarepsilon(\bv),
	\bvarepsilon(\bu)-\bvarepsilon(\bv))_{\mathbb{Q}}. \]
Then, assumption (\ref{Ass:E})\,(b) yields
\begin{equation}
\langle A\bu-A\bv,\bu-\bv\rangle\geq m_{\cal E}\|\bu-\bv\|_{\boldsymbol V}^2\quad\forall\,\bu,\bv\in \bV.
\label{Astrmo}
\end{equation}
This shows that the monotonicity condition \eqref{eq2z} is satisfied with $m_A=m_{\mathcal F}$.

Next, for $\Phi$ defined by \eqref{8b3}, it is easy to see that $\Phi\colon \bV\to \mathbb{R}$ is 
continuous and convex. The potential function $\psi_\nu\colon \mathbb{R}\to\mathbb{R}$
is assumed to satisfy $H(\psi_\nu)$ with $m=1$.  
For $\fb$, assumption \eqref{Ass:f} implies $H(f)$.  Considering the above relationships
among constants and noting that $c_\Delta=\lambda_{\nu}^{-1/2}$, we see that assumption \eqref{8sma}
implies the smallness condition $\alpha_\psi c_\Delta^2<m_A$ in Theorem~\ref{thm1}.

Therefore, we can apply Theorem~\ref{thm1} to conclude that there exists a unique element $\bu\in\bV$ such that
\eqref{cont7} is satisfied.   \hfill
\end{proof}

Theorem~\ref{t21} provides the unique weak solvability of the contact problem, in terms of the displacement.
Once the displacement field is obtained by solving the contact problem, the stress field $\bsigma$
is uniquely determined by using the constitutive law \eqref{cont2}.

We proceed with the discretization of Problem \ref{prob:cont1} using the finite element method.
For simplicity, assume $\Omega$ is a polygonal/polyhedral domain and express the three parts of the boundary,
as unions of closed flat components with disjoint interiors:
\[ \overline{\Gamma_Z}=\cup_{i=1}^{i_Z}\Gamma_{Z,i},\quad Z=D,N,C.\]
Let $\{{\cal T}^h\}$ be a regular family of partitions of $\overline{\Omega}$
into triangles/tetrahedrons that are compatible with the partition of
the boundary $\partial\Omega$ into $\Gamma_{Z,i}$, $1\le i\le i_Z$, $Z=D,N,C$,
in the sense that if the intersection of one side/face of an
element with one set $\Gamma_{Z,i}$ has a positive measure with
respect to $\Gamma_{Z,i}$, then the side/face lies entirely in $\Gamma_{Z,i}$.
Then construct a linear element space corresponding to ${\cal T}^h$,
\begin{equation}
\bV^h=\left\{\bv^h\in C(\overline{\Omega})^d \mid \bv^h|_T\in \mathbb{P}_1(T)^d
   \ {\rm for}\ T\in {\cal T}^h,\ \bv^h=\bzero\ {\rm on\ }\overline{\Gamma_D}\right\}.
\label{Vh}
\end{equation}

For any function $\bw\in H^2(\Omega)^d$, by the Sobolev embedding $H^2(\Omega)\subset C(\overline{\Omega})$
valid for $d\le 3$, we know that $\bw\in C(\overline{\Omega})^d$ and so its finite element interpolant
$\Pi^h\bw\in \bV^h$ is well defined.  Moreover, the following error estimate holds (cf.\ any of the 
references \cite{AH2009, BS2008, Ci1978}): for some constant $c>0$ independent of $h$,
\begin{equation}
\|\bw-\Pi^h\bw\|_{L^2(\Omega)^d}+h\,\|\bw-\Pi^h\bw\|_{H^1(\Omega)^d}\le c\,|\bw|_{H^2(\Omega)^d}
\quad\forall\,\bw\in H^2(\Omega)^d.
\label{estimate}
\end{equation}

The finite element approximation of Problem \ref{prob:cont1} is the following.

\begin{problem}\label{p2h}
{\it Find a displacement field $\bu^h\in \bV^h$ such that}
\begin{align}
&\int_{\Omega}{\cal E}\bvarepsilon(\bu^h):(\bvarepsilon(\bv^h)-\bvarepsilon(\bu^h))\,dx
+ \int_{\Gamma_C}f_b\left(|\bv^h_\tau|-|\bu^h_\tau|\right) ds \nonumber\\[1mm]
&\qquad{} +\int_{\Gamma_C} \psi^0_\nu (u_\nu^h; v_\nu^h-u_\nu^h)\,ds\ge \langle\fb, \bv^h-\bu^h\rangle \quad\forall\,\bv^h\in \bV^h.
\label{s7.2a}
\end{align}
\end{problem}

Similar to Problem \ref{prob:cont1}, we can apply a discrete analog of the arguments in the proof 
of Theorem \ref{t21} to conclude that Problem \ref{p2h} admits a unique solution $\bu^h\in \bV^h$.

For an error analysis, we notice that by Theorem \ref{thm:num_1},
\begin{equation}
\|\bu-\bu^h\|_{\boldsymbol V}^2  \le c\inf_{{\boldsymbol v}^h\in {\boldsymbol V}^h}\left[\|\bu-\bv^h\|_{\boldsymbol V}^2
+\|u_\nu-v^h_\nu\|_{L^2(\Gamma_C)}+R_{\boldsymbol u}(\bv^h,\bu)\right],
\label{s7.2aaa}
\end{equation}
where the residual-type term from \eqref{err8} is
\begin{align}
R_{\boldsymbol u}(\bv^h,\bu)&=({\cal E}(\bvarepsilon(\bu)),\bvarepsilon(\bv^h-\bu))_{\mathbb{Q}}
+  \int_{\Gamma_C}f_b\left(|\bv_\tau^h|-|\bu_\tau|\right) ds \nonumber\\[1mm]
&\quad{}+\int_{\Gamma_C} \psi^0_\nu (u_\nu; v_\nu^h-u_\nu)\,ds- \langle\fb, \bv^h-\bu\rangle. \label{s7.2c}
\end{align}

To proceed further, we make the following solution regularity assumptions:
\begin{equation}
\bu\in H^2(\Omega;\mathbb{R}^d),\quad \bsigma={\cal E}(\bvarepsilon(\bu))\in H^1(\Omega;\mathbb{S}^d).
\label{s7.2b}
\end{equation}
In many application problems, $\bsigma\in H^1(\Omega;\mathbb{S}^d)$ follows from
$\bu\in H^2(\Omega;\mathbb{R}^d)$, e.g., if the material is linearly elastic with
suitably smooth coefficients, or if the elasticity operator ${\cal E}$ depends on $\bx$ smoothly.
In the latter case, we recall that ${\cal E}(\bx,\bvarepsilon)$ is a Lipschitz function of $\bvarepsilon$,
and the composition of a Lipschitz continuous function and an $H^1(\Omega)$ function is an $H^1(\Omega)$ function.
Note that $\bsigma\in H^1(\Omega;\mathbb{S}^d)$ implies
\begin{equation}
\bsigma\bnu\in L^2(\Gamma;\mathbb{R}^d).
\label{s7.2bb}
\end{equation}
For an appropriate upper bound on $R_{\boldsymbol u}(\bv^h,\bu)$ defined in \eqref{s7.2c},
we need to derive some point-wise relations for the weak solution $\bu$ of Problem \ref{prob:cont1}.
We follow a procedure found in \cite[Section 8.2]{HS2002}.  Introduce a subspace $\tilde{\bV}$ of $\bV$ by
\begin{equation}
\tilde{\bV}:=\left\{\,\bw\in C^\infty(\overline{\Omega};\mathbb{R}^d)\mid
 \bw=\bzero \ {\rm on\ }\Gamma_D\cup\Gamma_C\,\right\}.
\label{s7.2cc}
\end{equation}
We take $\bv=\bu+\bw$ with $\bw\in \tilde{\bV}$ in \eqref{cont7} to get
\[ \int_\Omega {\cal E}(\bvarepsilon(\bu)):\bvarepsilon(\bw)\,dx\ge
\int_\Omega \fb_0{\cdot}\bw\,dx+\int_{\Gamma_N} \fb_2{\cdot}\bw\,ds. \]
By replacing $\bw\in \tilde{\bV}$ with $-\bw\in \tilde{\bV}$ in the above inequality, we find the equality
\begin{equation}
\int_\Omega {\cal E}(\bvarepsilon(\bu)):\bvarepsilon(\bw)\,dx=
\int_\Omega \fb_0{\cdot}\bw\,dx+\int_{\Gamma_N} \fb_2{\cdot}\bw\,ds \quad\forall\, \bw \in \tilde{\bV}.
\label{s7.2bbb}
\end{equation}
Thus,
\[ \int_\Omega {\cal E}(\bvarepsilon(\bu)):\bvarepsilon(\bw)\,dx=
\int_\Omega \fb_0{\cdot}\bw\,dx\quad\forall\, \bw \in  C^\infty_0(\Omega;\mathbb{R}^d), \]
and so in the distributional sense,
\[ {\rm div}\,{\cal E}(\bvarepsilon({\bu}))+\fb_0=\bzero.\]
Since ${\cal E}(\bvarepsilon(\bu))\in H^1(\Omega;\mathbb{S}^d)$ and $\fb_0\in L^2(\Omega;\mathbb{R}^d)$,
the above equality holds pointwise:
\begin{equation}
{\rm div}\,{\cal E}(\bvarepsilon({\bu}))+\fb_0=\bzero\quad{\rm a.e.\ in\ }\Omega. \label{s7.2d}
\end{equation}
Performing integration by parts in \eqref{s7.2bbb} and using the relation \eqref{s7.2d}, we have
\[\int_{\Gamma_N} \bsigma\bnu{\cdot}\bw\,ds=\int_{\Gamma_N} \fb_2{\cdot}\bw\,ds
  \quad\forall\, \bw \in \tilde{\bV}.\]
Since $\bsigma\bnu\in L^2(\Gamma;\mathbb{R}^d)$ (cf.\ \eqref{s7.2bb}) and $\bw \in \tilde{\bV}$
is arbitrary, we derive from the above equality that
\begin{equation}
\bsigma\bnu=\fb_2\quad{\rm a.e.\ on\ }\Gamma_N. \label{s7.2e}
\end{equation}
Now multiply \eqref{s7.2d} by $\bv-\bu$ with $\bv\in \bV$, integrate over $\Omega$, and integrate by parts
to get
\[ \int_{\Gamma}\bsigma\bnu{\cdot}(\bv-\bu)\,ds-\int_\Omega{\cal E}(\bvarepsilon(\bu)):\bvarepsilon(\bv-\bu)\,dx
+\int_\Omega \fb_0{\cdot}(\bv-\bu)\,dx=0, \]
i.e.,
\begin{equation}
\int_\Omega {\cal E}(\bvarepsilon(\bu)):\bvarepsilon(\bv-\bu)\,dx=
\langle\fb,\bv-\bu\rangle+\int_{\Gamma_C}\bsigma\bnu{\cdot}(\bv-\bu)\,ds\quad\forall\,\bv\in \bV.
\label{s7.2f}
\end{equation}
Thus,
\[ R_{\boldsymbol u}(\bv^h,\bu)=\int_{\Gamma_C} \left[ \bsigma\bnu{\cdot}(\bv^h-\bu)
 +f_b \left(|\bv^h_\tau|-|\bu_\tau|\right)+\psi_\nu^0(u_\nu;v^h_\nu-u_\nu)\right] ds,\]
and then,
\begin{equation}
\left|R_{\boldsymbol u}(\bv^h,\bu)\right| \le c\,\|\bu-\bv^h\|_{L^2(\Gamma_C)^d}.
\label{s7.2g}
\end{equation}
Finally, from \eqref{s7.2aaa}, we have the inequality
\begin{equation}
\|\bu-\bu^h\|_{\boldsymbol V} \le c\inf_{{\boldsymbol v}^h\in {\boldsymbol V}^h}\left[\|\bu-\bv^h\|_{\boldsymbol V}
+\|\bu-\bv^h\|_{L^2(\Gamma_C)^d}^{1/2}\right].
\label{s7.2h}
\end{equation}
Under additional solution regularity assumption
\begin{equation}
\bu|_{\Gamma_{C,i}}\in H^2(\Gamma_{C,i};\real^d), \quad 1\le i\le i_C,  \label{s7.2i}
\end{equation}
for the finite element interpolant $\Pi^h\bu$, we have
\begin{equation}
\|\bu-\Pi^h\bu\|_{L^2(\Gamma_C)^d}\le c\,h^2.
\label{5.20a}
\end{equation}
Then we derive from \eqref{s7.2h} the following optimal order error bound
\begin{equation}
\|\bu-\bu^h\|_{\boldsymbol V}\le c\left[\|\bu-\Pi^h\bu\|_{\boldsymbol V}
+\|\bu-\Pi^h\bu\|_{L^2(\Gamma_C)^d}^{1/2}\right]\le c\,h,
\label{s7.2j}
\end{equation}
where the constant $c$ depends on the quantities $\|\bu\|_{H^2(\Omega;\mathbb{R}^d)}$,
$\|\bsigma\bnu\|_{L^2(\Gamma_C;\mathbb{R}^d)}$ and $\|\bu\|_{H^2(\Gamma_{C,i};\mathbb{R}^d)}$
for $1\le i\le i_C$.

\smallskip
We comment that similar results hold for the frictionless version of the model, i.e., where the
friction condition \eqref{cont6} is replaced by
\begin{equation}
\bsigma_\tau=\bzero\quad{\rm on}\ \Gamma_C.
\label{s7.2k}
\end{equation}
Then the problem is to solve the inequality \eqref{cont7} without the term
\[ \int_{\Gamma_C}f_b \left(|\bv_\tau|-|\bu_\tau|\right) ds,  \]
i.e., to find a displacement field $\bu\in \bV$ such that
\begin{equation}
\int_{\Omega}{\cal E}\bvarepsilon({\bu}):\bvarepsilon(\bv)\,dx
+\int_{\Gamma_C}\psi^0_\nu (u_\nu; v_\nu)\, ds \ge \langle\fb, \bv\rangle\quad\forall\,\bv\in \bV.
\label{s7.2l}
\end{equation}
The inequality \eqref{s7.2h} and the error bound \eqref{s7.2j} still hold for the linear finite
element solutions under the solution regularity conditions \eqref{s7.2b} and \eqref{s7.2i}.

\subsection{Studies of Problem~\ref{prob:cont2}}\label{subsec:cont2}

Problem~\ref{prob:cont2} is simpler to analyze than Problem~\ref{prob:cont1} in the sense that the inequality 
\eqref{cont9} does not include the non-smooth convex terms $I_{\Gamma_C}(f_b|\bv_\tau|)$ 
and $I_{\Gamma_C}(f_b|\bu_\tau|)$.  Similar to Theorem \ref{t21}, we have the next result result,
derived from Theorem~\ref{thm1}.

\begin{theorem}\label{t22}
Assume \eqref{Ass:E}, \eqref{Ass:f}, \eqref{psi_tau}, and
\begin{equation*}  
\alpha_{\psi_\tau} \lambda_{\tau,1}^{-1} < m_{{\cal E}}.
\end{equation*}
Then Problem \ref{prob:cont2} has a unique solution.
\end{theorem}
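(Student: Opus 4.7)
The plan is to recast Problem \ref{prob:cont2} as an instance of the abstract Problem \ref{prob:VHI} and then invoke Theorem \ref{thm1}, mimicking the proof of Theorem \ref{t21} but with the modifications forced by the bilateral constraint $u_\nu=0$ on $\Gamma_C$ and the absence of a Tresca-type convex term. Concretely, I would take $V=K=\bV_1$ (the subspace defined in \eqref{SpV1}), $\Delta=\Gamma_C$, $m=d$ so that $V_\psi=L^2(\Gamma_C;\mathbb{R}^d)$, and let $\gamma_\psi\colon\bV_1\to V_\psi$ be the tangential trace $\bv\mapsto\bv_\tau|_{\Gamma_C}$. I would set $\psi=\psi_\tau$, $\Phi\equiv 0$, define $A\colon\bV_1\to\bV_1^{\ast}$ by the elasticity form \eqref{8b1}, and let $\fb\in\bV_1^{\ast}$ be given by \eqref{8ef}.

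The verification of the hypotheses of Theorem \ref{thm1} is then essentially a rerun of the argument in Theorem \ref{t21}. The assumptions $H(V)$ and $H(K)$ are immediate, since $\bV_1$ is a closed linear subspace of the Hilbert space $\bV$. The assumption $H(A)$ holds with $L_A=L_{\cal E}$ and $m_A=m_{\cal E}$ by \eqref{Ass:E}(a),(b), with no dependence on whether the test functions satisfy $v_\nu=0$ on $\Gamma_C$. The assumption $H(\Phi)$ is trivial for $\Phi\equiv 0$. For $H(\psi)$, the tangential trace belongs to ${\cal L}(\bV_1;L^2(\Gamma_C;\mathbb{R}^d))$ thanks to \eqref{tr_tau1}, while \eqref{psi_tau}(a)--(d) supply exactly the local Lipschitz continuity, the growth bound \eqref{eq10w}, and the relaxed-monotonicity condition \eqref{eq4} for $\psi_\tau$ on $\mathbb{R}^d$ with constants $\bar c_0,\bar c_1$ and $\alpha_{\psi_\tau}$. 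The assumption $H(f)$ follows from \eqref{Ass:f}.

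The only step requiring a moment of care is the smallness condition. The constant $c_\Delta$ defined in \eqref{eq12w} is now the smallest number such that
\[
\int_{\Gamma_C}|\bv_\tau|^2\,ds\le c_\Delta^2\,\|\bv\|_{\boldsymbol V}^2\quad\forall\,\bv\in\bV_1,
\]
and by the very definition of $\lambda_{\tau,1}$ in the eigenvalue problem following \eqref{tr_tau1} this yields $c_\Delta=\lambda_{\tau,1}^{-1/2}$. Hence
\[
\alpha_\psi\,c_\Delta^2=\alpha_{\psi_\tau}\,\lambda_{\tau,1}^{-1}<m_{\cal E}=m_A
\]
exactly under the standing hypothesis of the theorem. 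This is where the choice of $\bV_1$ (rather than $\bV$) matters: one must use the constant $\lambda_{\tau,1}$ from the trace inequality restricted to $\bV_1$, not $\lambda_\tau$, because the hypothesis is phrased in terms of $\lambda_{\tau,1}$. With all assumptions of Theorem \ref{thm1} verified, we obtain a unique $\bu\in\bV_1$ satisfying \eqref{cont9}.

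There is no genuine obstacle here; the argument is essentially a check-list. The one subtlety worth flagging is precisely the bookkeeping of the correct eigenvalue constant $\lambda_{\tau,1}$ associated with the smaller space $\bV_1$, which is why the hypothesis is stated as $\alpha_{\psi_\tau}\lambda_{\tau,1}^{-1}<m_{\cal E}$ rather than the weaker-looking $\alpha_{\psi_\tau}\lambda_{\tau}^{-1}<m_{\cal E}$.
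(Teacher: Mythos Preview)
Your proposal is correct and follows essentially the same approach as the paper, which simply states that Theorem~\ref{t22} is derived from Theorem~\ref{thm1} analogously to the proof of Theorem~\ref{t21}. Your identification of $V=K=\bV_1$, $\Phi\equiv 0$, $\psi=\psi_\tau$, $\gamma_\psi$ as the tangential trace, and in particular $c_\Delta=\lambda_{\tau,1}^{-1/2}$, is exactly the intended instantiation.
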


For the finite element solution of Problem~\ref{prob:cont2}, we keep the setting on the finite element
partitions of $\overline{\Omega}$ in Subsection \ref{subsec:cont2}.  Then we introduce a subspace
of $\bV^h$ of \eqref{Vh}:
\begin{equation}
\bV^h_1=\left\{\bv^h\in \bV^h\mid v^h_\nu=0\ {\rm on}\ \overline{\Gamma_C}\right\}. 
\label{Vh1}
\end{equation}
Note that the constraint ``$v^h_\nu=0$ on $\overline{\Gamma_C}$'' is equivalent to 
``$v^h_\nu=0$ at all nodes on $\overline{\Gamma_C}$''.
The finite element method for solving Problem~\ref{prob:cont2} is the following.

\begin{problem}\label{prob:cont2h}
{\it Find a displacement field $\bu^h\in \bV^h_1$ such that}
\begin{equation}
({\cal E}(\bvarepsilon(\bu^h)),\bvarepsilon(\bv^h))_\mathbb{Q}
+I_{\Gamma_C} (\psi_\tau^0 (\bu^h_\tau; \bv^h_\tau))\ge\langle \fb,\bv^h\rangle\quad\forall\,\bv^h\in \bV^h_1.
\label{cont9h}
\end{equation}
\end{problem}

Under the assumptions stated in Theorem \ref{t22}, Problem \ref{prob:cont2h} has a unique solution $\bu^h$.  
Moreover, by \eqref{err16.aa},
\begin{equation}
\|\bu-\bu^h\|_{\boldsymbol V}^2  \le c\inf_{{\boldsymbol v}^h\in {\boldsymbol V}^h_1}
\left[\|\bu-\bv^h\|_{\boldsymbol V}^2+\|\bu_\tau-\bv^h_\tau\|_{L^2(\Gamma_C)^d}+R_{\boldsymbol u}(\bv^h-\bu)\right],
\label{Cea:cont2}
\end{equation}
where 
\begin{equation}
R_{\boldsymbol u}(\bw)=({\cal E}(\bvarepsilon(\bu)),\bvarepsilon(\bw))_{\mathbb{Q}}
+\int_{\Gamma_C} \psi^0_\tau (\bu_\tau; \bw_\tau)\,ds- \langle\fb,\bw\rangle. \label{Re:cont2}
\end{equation}

Similar to \eqref{s7.2h}, under the solution regularity condition \eqref{s7.2b},
\begin{equation}
\|\bu-\bu^h\|_{\boldsymbol V} \le c\inf_{{\boldsymbol v}^h\in {\boldsymbol V}^h_1}\left[\|\bu-\bv^h\|_{\boldsymbol V}
+\|\bu-\bv^h\|_{L^2(\Gamma_C)^d}^{1/2}\right].
\label{Cea1:cont2}
\end{equation}
Then, under the solution regularity conditions \eqref{s7.2b} and \eqref{s7.2i}, similar to \eqref{s7.2j},
we can show that
\[ \|\bu-\bu^h\|_{\boldsymbol V}\le c\,h. \]

\subsection{Studies of Problem~\ref{prob:cont3}}\label{subsec:cont3}

For a study of Problem~\ref{prob:cont3}, we apply Theorem~\ref{thm1} to get the following result.

\begin{theorem}\label{t23}
Assume \eqref{Ass:E}, \eqref{Ass:f}, \eqref{psi_nu}, $f_b\ge 0$, $g\in L^2(\Gamma_C)$, $g\ge 0$, and
\begin{equation*}  
\alpha_{\psi_\nu} \lambda_{\nu}^{-1} < m_{{\cal E}}.
\end{equation*}
Then Problem \ref{prob:cont3} has a unique solution.
\end{theorem}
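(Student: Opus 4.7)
The plan is to apply Theorem \ref{thm1} to Problem \ref{prob:cont3} by specializing the abstract data exactly as in the proofs of Theorems \ref{t21} and \ref{t22}, with the single essential difference that the admissible set $K$ is now the proper subset $\bU\subset\bV$ given by \eqref{def:U}. Concretely, I would take $V=\bV$, $K=\bU$, $\Delta=\Gamma_C$, $V_\psi=L^2(\Gamma_C)$ (so $m=1$), $\gamma_\psi\colon\bV\to L^2(\Gamma_C)$ the normal trace operator $\bv\mapsto v_\nu$, $\psi=\psi_\nu$, and then $A$, $\Phi$, and $\fb$ defined by \eqref{8b1}--\eqref{8ef}. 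With these identifications, the VHI \eqref{cont9_3} becomes precisely an instance of \eqref{eq1}.

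Next I would verify each hypothesis of Theorem \ref{thm1} in turn. The assumption $H(V)$ is immediate since $\bV$ is a Hilbert space under the norm \eqref{3.3n}. The verifications of $H(A)$ (with $L_A=L_{\cal E}$ and $m_A=m_{\cal E}$), $H(\Phi)$ (continuity and convexity of the Tresca friction functional), $H(\psi)$ (from \eqref{psi_nu} together with continuity of $\gamma_\psi$ via the trace inequality \eqref{tr_nu}), and $H(f)$ (from \eqref{Ass:f}) are identical to the arguments given in the proof of Theorem \ref{t21}; the constant $c_\Delta$ coincides with $\lambda_\nu^{-1/2}$, so the smallness assumption $\alpha_{\psi_\nu}\lambda_\nu^{-1}<m_{\cal E}$ is exactly $\alpha_\psi c_\Delta^2<m_A$. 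These verifications can be done briefly by reference to Theorem \ref{t21}.

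The only genuinely new point is the verification of $H(K)$ for $\bU$. For this I would argue as follows. Convexity of $\bU$ is immediate, since if $\bv^1,\bv^2\in\bU$ and $t\in[0,1]$, then $(t\bv^1+(1-t)\bv^2)\cdot\bnu=tv_\nu^1+(1-t)v_\nu^2\le g$ on $\Gamma_C$. Nonemptiness follows from $\bzero\in\bU$, using the hypothesis $g\ge 0$. For closedness, the normal trace operator $\bv\mapsto v_\nu$ is continuous from $\bV$ into $L^2(\Gamma_C)$ by \eqref{tr_nu}, and the set $\{w\in L^2(\Gamma_C)\mid w\le g\ \text{a.e.}\}$ is closed in $L^2(\Gamma_C)$ (it is the preimage of the closed positive cone under the continuous map $w\mapsto g-w$, using the assumption $g\in L^2(\Gamma_C)$); hence $\bU$ is the preimage of a closed convex set under a continuous linear map and is closed in $\bV$.

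Once $H(K)$ is established, Theorem \ref{thm1} applies directly and delivers the unique solution $\bu\in\bU$ of Problem \ref{prob:cont3}. I do not anticipate any real obstacle here; the verification of $H(K)$ for $\bU$ is the only step that did not appear in the earlier two theorems, and even that reduces to standard continuity of the normal trace plus the hypothesis $g\in L^2(\Gamma_C)$ with $g\ge 0$.
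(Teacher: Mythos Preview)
Your proposal is correct and follows exactly the approach indicated in the paper, which simply states that Theorem~\ref{thm1} is applied to obtain the result. Your identification of $H(K)$ for $\bU$ as the only new verification (nonemptiness via $g\ge 0$, convexity, and closedness via continuity of the normal trace) is precisely what is needed to complete the argument.
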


For the finite element approximation of Problem~\ref{prob:cont3}, define
\begin{equation}
\bU^h = \left\{ \bv^h\in\bV^h \mid v^h_\nu\le g\ {\rm at\ all\ nodes\ on}\ \overline{\Gamma_C}\right\}.
\label{Uh}
\end{equation}
Then the finite element method for solving Problem~\ref{prob:cont3} is the following.

\begin{problem}\label{prob:cont3h}
{\it Find a displacement field $\bu^h\in \bU^h$ such that}
\begin{align}
& ({\cal E}(\bvarepsilon(\bu^h)),\bvarepsilon(\bv^h) -\bvarepsilon(\bu^h))_\mathbb{Q}
+I_{\Gamma_C}(f_b|\bv^h_\tau|)-I_{\Gamma_C}(f_b|\bu^h_\tau|)
+I_{\Gamma_C} (\psi_\nu^0 (u^h_\nu; v^h_\nu-u^h_\nu))\nonumber\\
&\qquad {} \ge\langle \fb,\bv^h-\bu^h\rangle\quad\forall\,\bv^h\in \bU^h.
\label{cont9_3h}
\end{align}
\end{problem}

Under the assumptions stated in Theorem \ref{t23}, Problem \ref{prob:cont3h} has a unique solution $\bu^h$.  
For the error estimation, for simplicity, we assume $g$ is a concave function.  Then, $\bU^h\subset \bU$,
and similar to \eqref{s7.2aaa}, 
\begin{equation}
\|\bu-\bu^h\|_{\boldsymbol V}^2\le c\inf_{{\boldsymbol v}^h\in {\boldsymbol U}^h}\left[\|\bu-\bv^h\|_{\boldsymbol V}^2
+\|u_\nu-v^h_\nu\|_{L^2(\Gamma_C)}+R_{\boldsymbol u}(\bv^h,\bu)\right].
\label{Cea:cont3}
\end{equation}
where 
\begin{align}
R_{\boldsymbol u}(\bv^h,\bu) & = ({\cal E}(\bvarepsilon(\bu)),\bvarepsilon(\bv^h) -\bvarepsilon(\bu))_\mathbb{Q}
+I_{\Gamma_C}(f_b|\bv^h_\tau|)-I_{\Gamma_C}(f_b|\bu_\tau|)\nonumber\\
&\quad {} +I_{\Gamma_C} (\psi_\nu^0 (u_\nu; v^h_\nu-u_\nu))-\langle \fb,\bv^h-\bu\rangle.
\label{Re:cont3}
\end{align}

Similar to \eqref{s7.2h}, under the solution regularity condition \eqref{s7.2b}, we can derive from \eqref{Cea:cont3} that
\[ \|\bu-\bu^h\|_{\boldsymbol V} \le c\inf_{{\boldsymbol v}^h\in {\boldsymbol U}^h}\left[\|\bu-\bv^h\|_{\boldsymbol V}
+\|\bu-\bv^h\|_{L^2(\Gamma_C)^d}^{1/2}\right]. \]
Again, assuming both \eqref{s7.2b} and \eqref{s7.2i}, we have the optimal order error estimate
\[ \|\bu-\bu^h\|_{\boldsymbol V}\le c\,h. \]

\section{Virtual element method for variational-hemivariational inequality}\label{sec:VEM}

In the previous sections, we studied the FEM to solve the contact problems.  Other numerical methods can be
applied for the contact problems as well.  In this section, we take the virtual element method (VEM) as
an example.  
The VEM was first proposed and analyzed in \cite{beirao2013basic, beirao2013virtual}. The method
has since been applied to a wide variety of mathematical models from applications in science and engineering 
thanks to its strengths in handling complex geometries and problems requiring high-regularity solutions.
The VEM was first applied to solve contact problems in \cite{wriggers2016virtual}.  Further applications of
the VEM can be found in a number of publications, e.g., \cite{AHABW20, CHKW22, WZ21, WWH24}.
The presentation on the VEM here follows \cite{FHH19, FHH21a, WWH21}.

\subsection{An abstract framework}

We reconsider Problem \ref{prob:VHI}, yet for the case where the operator $A\colon V \to V^*$ is generated
by a bilinear form $ a(\cdot, \cdot)\colon V \times V \to \mathbb{R} $ through the relation
\[ a(u,v) = \langle A u, v\rangle \quad \forall\, u, v\in V. \]

In other words, the abstract problem for the study of VEM is the following.

\begin{problem}\label{prob:VHI_bilinear}
Find $u\in K$ such that
\begin{equation}
a(u,v-u) +\Phi(v)-\Phi(u)+I_\Delta(\psi^0(\gamma_\psi u;\gamma_\psi v-\gamma_\psi u))
\ge \langle f,v-u\rangle \quad\forall\,v\in K.
\label{eq1V}
\end{equation}
\end{problem}

We will assume $H(V)$, $H(K)$, $H(\Phi)$, $H(\psi)$, $H(f)$ from Subsection \ref{subsec:abs}.  
Corresponding to $H(A)$, the bilinear form $a(\cdot,\cdot)$ is assumed to be bounded with a boundedness constant $L_a$
and $V$-elliptic with an ellipticity constant $m_a$. By Theorem \ref{thm1}, if we further assume the smallness 
condition $\alpha_\psi c_\Delta^2<m_a$.  We will also assume $a$ is symmetric.  

\smallskip
\noindent\underline{$H(a)$} The bilinear form $a\colon V\times V\to\mathbb{R}$ is symmetric, bounded 
with the boundedness constant $L_a$ and $V$-elliptic with the ellipticity constant $m_a$:
\begin{align}
\left|a(u,v)\right| & \le L_a \|u\|_V \|v\|_V\quad\forall\,u,v\in V, \label{a1}\\
a(v,v) & \ge m_a \|v\|_V^2 \quad\forall\,v\in V. \label{a2}
\end{align}

\smallskip

To develop a general framework for the VEM, let $\Omega$ be the spatial domain of Problem \ref{prob:VHI_bilinear}.  We assume $\Omega$ is a bounded 
polygonal domain, and denote by $\mathcal{T}^h$ a partition of $\overline{\Omega}$ into polygonal elements $\{T\}$. 
Define $ h_T = \text{diam}(T) $ for each element $T$, and define $ h = \max\{ h_T : T \in \mathcal{T}^h \} $
for the mesh-size of the partition $\mathcal{T}^h$. As the bilinear form $a(u, v)$ is typically an integral 
over the domain $\Omega$, we can split it element-wise as
\begin{equation}
a(u, v) = \sum_{T \in \mathcal{T}^h} a_T(u, v),
\label{a0}
\end{equation}
where $ a_T(u, v) $ denotes the restriction of $ a(u, v) $ to $ T $, which is an integral over $T$.  Let $V_T$ be the restriction of $V$ to $T$, which is a function space over $T$.  

\smallskip

For the setting of the VEM, we make the following assumptions.

\smallskip
\noindent\underline{$H(h)$} For each partition $\mathcal{T}^h$, there is a virtual element space $ V^h \subset V$.
For a positive integer $k$, $V_T^h\supset \mathbb{P}_k(T)$, where $V_T^h:=V^h|_T$ is the restriction 
of $V^h$ on $T$.\\
Related to local function spaces $V_T=V|_T$ on elements $T\in \mathcal{T}^h$, we have the 
decomposition \eqref{a0} in which, for each element $T$, $a_T\colon V_T\times V_T\to\mathbb{R}$ is symmetric, 
non-negative and bounded with the boundedness constant $L_a$:
\begin{align}
\left|a_T(u,v)\right| & \le L_a \| u \|_{V, T} \| v \|_{V, T}\quad\forall\,u,v\in V_T, \label{a3}\\
a_T(v,v) & \ge 0 \quad\forall\,v\in V_T. \label{a4}
\end{align}
The discrete bilinear form $a^h\colon V^h\times V^h\to\mathbb{R}$ can be split into the summation of 
local contributions
\begin{align}
a^h(u^h, v^h) = \sum_{T \in \mathcal{T}^h} a^h_T(u^h, v^h), \label{bilinear_split}
\end{align}
where $ a^h_T(\cdot, \cdot) $ is a symmetric bilinear form on $ V_T^h$ such that
\begin{equation}\label{P_consistency}
a^h_T(v^h, p) = a_T(v^h, p) \quad \forall v^h \in V_T^h, \, p \in \mathbb{P}_k(T);
\end{equation}
and for two positive constants $ \alpha_* $ and $ \alpha^* $, independent of $ h $ and $ T $, 
\begin{equation}\label{stability_ahT}
\alpha_* a_T(v^h, v^h) \leq a^h_T(v^h, v^h) \leq \alpha^* a_T(v^h, v^h) \quad \forall v^h \in V_T^h.
\end{equation}
The discrete linear functional $ f^h \in (V^h)^* $ is uniformly bounded: for a constant $c$ independent of $h$,
\[ \| f^h \|_{(V^h)^*} = \sup_{v^h \in V^h} \frac{\langle f^h, v^h \rangle}{\| v^h \|_V}\le c. \]

\smallskip

In the literature, the property \eqref{P_consistency} is called the $k$-consistency, and 
\eqref{stability_ahT} is known as the stability.

We comment that for simplicity in writing, we are using $L_a$ for the boundedness constants 
of $a(\cdot,\cdot)$ and $a_T(\cdot,\cdot)$ for $T\in{\cal T}^h$.  

It follows from $H(h)$ that
\begin{equation}\label{boundedness_aT}
a^h_T(u^h, v^h) \le \alpha^*  L_a \| u^h \|_{V, T} \| v^h \|_{V, T} \quad \forall\, u^h, v^h \in V_T^h.
\end{equation}
This inequality is proved as follows.  First, we notice that a consequence of \eqref{stability_ahT} 
and \eqref{a4} is 
\[ a^h_T(v^h,v^h) \ge 0 \quad\forall\,v^h\in V^h_T. \]
The above property and the symmetry of $a^h_T(\cdot,\cdot)$ imply
\[ a^h_T(u^h,v^h)\le a^h_T(u^h,u^h)^{1/2} a^h_T(v^h,v^h)^{1/2}. \]
By \eqref{stability_ahT}, 
\[ a^h_T(u^h,v^h)\le \alpha^* a_T(u^h,u^h)^{1/2} a_T(v^h,v^h)^{1/2}. \]
Finally, applying \eqref{a3}, we derive \eqref{boundedness_aT}.

By combining \eqref{bilinear_split}, \eqref{stability_ahT}, and \eqref{boundedness_aT}, we obtain
\begin{align}
\alpha_* a(v^h, v^h) &\le a^h(v^h, v^h) \le \alpha^* a(v^h, v^h) \quad \forall v^h \in V^h, \label{stability_vem} \\
a^h(u^h, v^h) &\le \alpha^*  L_a \| u^h \|_{V, h} \| v^h \|_{V, h} \quad \forall u^h, v^h \in V^h, \label{boundedness_vem}
\end{align}
where $ \| \cdot \|_{V, h} = \left( \sum_{T \in \mathcal{T}^h} \| \cdot \|_{V, T}^2 \right)^{1/2} $.

\subsection{Virtual element method for variational-hemivariational inequality}

We define $ K^h := V^h \cap K $ as the approximation of the convex set $ K $. The virtual element method for solving Problem \ref{prob:VHI} is formulated as follows:

\begin{problem}\label{prob:vem}
{\it Find $u^h\in K^h$ such that}
\begin{equation} \label{hvh}
a^h(u^h,v^h-u^h) +\Phi(v^h)-\Phi(u^h)+I_\Delta(\psi^0(\gamma_\psi u^h;\gamma_\psi v^h-\gamma_\psi u^h))
\ge\langle f^h,v^h-u^h \rangle\quad\forall\, v^h\in K^h.
\end{equation}
\end{problem}

The analog of Theorem \ref{thm1h} is the next result for Problem \ref{prob:vem}. 

\begin{theorem}\label{thm:well-VEM}
Assume the conditions $H(V)$, $H(K)$, $H(a)$, $H(\Phi)$, $H(\psi)$, $H(f)$, $H(h)$, and 
$\alpha_* m_a >\alpha_\psi c_\Delta^2$. Then, Problem \ref{prob:vem} has a unique solution $u^h\in K^h$.
\end{theorem}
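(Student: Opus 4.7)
The plan is to recognize that Problem \ref{prob:vem} is itself a variational-hemivariational inequality of the type studied in Problem \ref{prob:VHI}, only posed on the finite-dimensional space $V^h$ with the set $K^h$, the discrete bilinear form $a^h$, the same $\Phi$ and $\psi$, and the discrete right-hand side $f^h$. Consequently, Theorem \ref{thm1} will apply provided we verify each of the hypotheses $H(V)$, $H(K)$, $H(A)$, $H(\Phi)$, $H(\psi)$, $H(f)$, and the smallness condition in this discrete setting. Concretely, I would define the discrete operator $A^h\colon V^h\to (V^h)^*$ by
\[
\langle A^h u^h, v^h\rangle = a^h(u^h, v^h) \quad\forall\,u^h, v^h\in V^h,
\]
and reformulate Problem \ref{prob:vem} as seeking $u^h\in K^h$ with
\[
\langle A^h u^h, v^h-u^h\rangle + \Phi(v^h) - \Phi(u^h) + I_\Delta(\psi^0(\gamma_\psi u^h; \gamma_\psi v^h - \gamma_\psi u^h)) \ge \langle f^h, v^h - u^h\rangle.
\]

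Next I would verify each condition in turn. Since $V^h$ is a finite-dimensional subspace of the Hilbert space $V$, it inherits the inner product structure, so $H(V)$ holds. The set $K^h = V^h \cap K$ is the intersection of two closed convex sets, hence closed and convex in $V^h$; its non-emptiness is part of the standing assumption of the method (e.g.\ $0\in K^h$ whenever $0\in K$). The functional $\Phi$ remains convex and continuous on $V^h$, so $H(\Phi)$ persists; the hypotheses $H(\psi)$ and $H(f)$ (with $f^h\in (V^h)^*$ uniformly bounded, from $H(h)$) transfer verbatim.

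The core of the argument lies in checking $H(A)$ for the operator $A^h$. Using the symmetry of $a^h$ together with \eqref{stability_vem} and the $V$-ellipticity \eqref{a2} of $a$, I obtain, for every $v^h\in V^h$,
\[
\langle A^h v^h - A^h w^h, v^h - w^h\rangle = a^h(v^h - w^h, v^h - w^h) \ge \alpha_* a(v^h - w^h, v^h - w^h) \ge \alpha_* m_a \|v^h - w^h\|_V^2,
\]
so that $A^h$ is strongly monotone on $V^h$ with constant $m_{A^h} = \alpha_* m_a$. Similarly, the boundedness estimate \eqref{boundedness_vem} yields
\[
\|A^h u^h - A^h w^h\|_{(V^h)^*} \le \alpha^* L_a \|u^h - w^h\|_V,
\]
giving Lipschitz continuity with constant $L_{A^h} = \alpha^* L_a$. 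Hence $H(A)$ holds for $A^h$ on $V^h$.

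With these ingredients in place, the smallness condition required by Theorem \ref{thm1}, namely $\alpha_\psi c_\Delta^2 < m_{A^h}$, reduces to the assumed inequality $\alpha_\psi c_\Delta^2 < \alpha_* m_a$. An application of Theorem \ref{thm1} on the Hilbert space $V^h$ then delivers the existence of a unique solution $u^h\in K^h$ to Problem \ref{prob:vem}. The main technical obstacle is the correct derivation of the strong monotonicity constant for $A^h$; once the two-sided bound \eqref{stability_vem} is combined with the ellipticity of $a$, the remaining verifications are straightforward, and the smallness hypothesis has been calibrated precisely to this factor $\alpha_*$ so that the theorem applies.
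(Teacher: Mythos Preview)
Your proposal is correct and follows precisely the approach the paper intends: the paper states Theorem \ref{thm:well-VEM} as the analog of Theorem \ref{thm1h}, whose proof consists simply of invoking Theorem \ref{thm1} in the finite-dimensional setting. Your write-up actually spells out more than the paper does, in particular the crucial observation that the stability bound \eqref{stability_vem} combined with the ellipticity \eqref{a2} gives $A^h$ the strong monotonicity constant $m_{A^h}=\alpha_* m_a$, which is exactly why the smallness condition in the statement reads $\alpha_\psi c_\Delta^2 < \alpha_* m_a$.
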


In the following theorem, we establish a generalized form of C\'ea's inequality, for deriving error estimates for the virtual element method \eqref{hvh} used to solve Problem \ref{prob:VHI}.
In the theorem, we assume $\{u^h\}$ is bounded independent of $h$.  The boundedness of $\{u^h\}$ is 
valid if \eqref{3.3h} holds for the VEM sets $\{K^h\}$, as in Proposition \ref{prop:bd}.  
As a simpler situation, the boundedness of $\{u^h\}$ is valid if we assume $K$ and $\{K^h\}$ contain a 
common element, say $0$, by an argument shown in \cite{HSB17}.  

\begin{theorem}\label{thm:main}
Keep the assumptions stated in Theorem \ref{thm:well-VEM}, and $m_a >\alpha_\psi c_\Delta^2$. 
Let $ u $ and $ u^h $ be the solutions of Problem \ref{prob:VHI} and Problem \ref{prob:vem}, respectively.
Assume $\{u^h\}$ is bounded independent of $h$.  Then there exist two positive constants $c_1$ and $c_2$,
depending only on $\alpha$, $M$, $\alpha_*$ and $\alpha^*$, such that for any approximation $u^I\in K^h$
of $u$ and any piecewise polynomial approximation $u^{\pi}$ of $u$ with $u^{\pi}|_T\in \mathbb{P}_k(T)$ 
for all $ T \in \mathcal{T}^h $, we have
\begin{align} 
\|u-u^h\|_{V}^2 &\leq c_1\left(\|u-u^I\|_{V}^2 + \|u-u^{\pi}\|_{V,h}^2 + \|f-f^h\|_{(V^h)^*}^2
+ \|\gamma_\psi (u-u^I)\|_{V_\psi}\right)\nonumber\\[0.2mm]
&{}\quad  + c_2 R_u(u^I,u^h),
\label{err_general}
\end{align}
where 
\[ \|f-f^h\|_{(V^h)^*}:=\sup_{v^h\in V^h}\frac{\langle f,v^h\rangle-\langle f^h,v^h\rangle}{\|v^h\|_{V}},\]
and
\[ R_u(u^I,u^h):=a(u,u^I-u^h)+\Phi(u^I)-\Phi(u^h)-I_\Delta(\psi^0(\gamma_\psi u;\gamma_\psi u^h-\gamma_\psi u^I))
-\langle f, u^I-u^h\rangle. \]
\end{theorem}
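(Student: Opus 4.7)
The plan is to adapt the FEM C\'{e}a-type argument of Theorem \ref{thm:num_1} to the VEM setting. Two new sources of error appear: the consistency mismatch $a^h - a$, to be handled via the $k$-consistency property \eqref{P_consistency}, and the load mismatch $f - f^h$, bounded in the dual norm $\|\cdot\|_{(V^h)^*}$. Because the consistency error naturally couples to quantities living in $V^h$, it is cleaner to first establish the estimate on $\|u^I - u^h\|_V^2$ and then recover the bound on $\|u - u^h\|_V^2$ via $\|u - u^h\|_V^2 \le 2\|u - u^I\|_V^2 + 2\|u^I - u^h\|_V^2$. This also motivates using the stability constant $\alpha_* m_a$ (through \eqref{stability_vem}) rather than $m_a$ for the absorption on the left.

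The starting point is $\alpha_* m_a \|u^I - u^h\|_V^2 \le a^h(u^I - u^h, u^I - u^h) = a^h(u^I, u^I - u^h) - a^h(u^h, u^I - u^h)$. Taking $v^h = u^I$ in \eqref{hvh} bounds $-a^h(u^h, u^I - u^h)$ in terms of $\Phi(u^I) - \Phi(u^h)$, an $I_\Delta(\psi^0(\gamma_\psi u^h; \cdot))$ term, and $\langle f^h, u^I - u^h\rangle$. For $a^h(u^I, u^I - u^h)$, I introduce the piecewise polynomial $u^\pi$ and invoke $k$-consistency, writing $a^h(u^I, u^I - u^h) = a^h(u^I - u^\pi, u^I - u^h) + \sum_T a_T(u^\pi, u^I - u^h)$ and further splitting $\sum_T a_T(u^\pi, u^I - u^h) = a(u, u^I - u^h) + \sum_T a_T(u^\pi - u, u^I - u^h)$. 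Reorganizing so that the combination $a(u, u^I - u^h) + \Phi(u^I) - \Phi(u^h) - \langle f, u^I - u^h\rangle$ reconstitutes $R_u(u^I, u^h) + I_\Delta(\psi^0(\gamma_\psi u; \gamma_\psi u^h - \gamma_\psi u^I))$ produces an inequality of the form
\[
\alpha_* m_a \|u^I - u^h\|_V^2 \le R_u(u^I, u^h) + I_\psi + \langle f - f^h, u^I - u^h\rangle + E_a,
\]
with $I_\psi$ the sum of the two $\psi^0$-integrals involving $u, u^h, u^I$ and $E_a := a^h(u^I - u^\pi, u^I - u^h) + \sum_T a_T(u^\pi - u, u^I - u^h)$.

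For $I_\psi$, I would use subadditivity of $\psi^0(z;\cdot)$ to insert $\gamma_\psi u$ and $\gamma_\psi u^h$ as intermediate directions, isolate the pair matched by the relaxed monotonicity \eqref{eq4} to produce $\alpha_\psi c_\Delta^2 \|u - u^h\|_V^2$, and bound the remaining directional derivatives by $c\|\gamma_\psi(u - u^I)\|_{V_\psi}$ via \eqref{3.32a} together with the uniform boundedness of $\{u^h\}$. The pivotal step is then the \emph{sharp} triangle inequality $\|u - u^h\|_V^2 \le (1 + \delta)\|u^I - u^h\|_V^2 + (1 + \delta^{-1})\|u - u^I\|_V^2$, with $\delta > 0$ chosen so that $\alpha_\psi c_\Delta^2 (1 + \delta) < \alpha_* m_a$, which is permitted by the smallness hypothesis. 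The load mismatch and $E_a$ are then routine: Cauchy--Schwarz, the bounds \eqref{a3} and \eqref{boundedness_vem}, the triangle inequality $\|u^I - u^\pi\|_{V,h} \le \|u - u^I\|_V + \|u - u^\pi\|_{V,h}$, followed by Young's inequality with a small $\epsilon$, produce contributions of the form $\epsilon \|u^I - u^h\|_V^2 + c_\epsilon(\|u - u^I\|_V^2 + \|u - u^\pi\|_{V,h}^2 + \|f - f^h\|_{(V^h)^*}^2)$. Choosing $\delta$ and $\epsilon$ so that the net coefficient of $\|u^I - u^h\|_V^2$ on the right is strictly less than $\alpha_* m_a$ allows absorption, and the triangle inequality in $\|u - u^h\|_V$ delivers \eqref{err_general}. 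The principal obstacle is that a naive use of $m_a$-ellipticity of $a$ would leave a $\|u - u^h\|_V^2$ term with coefficient of order $(1+\alpha^*)L_a$ arising from the consistency bound, not controllable under the stated hypotheses; anchoring the estimate in $\|u^I - u^h\|_V^2$ together with the sharp $(1+\delta)$-splitting for $I_\psi$ circumvents this and keeps the absorption requirement at the already-assumed smallness $\alpha_* m_a > \alpha_\psi c_\Delta^2$.
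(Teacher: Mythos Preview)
Your proposal is correct and follows essentially the same approach as the paper's proof: the same anchoring in $\|u^I-u^h\|_V^2$ via the stability bound $\alpha_* m_a\|e^h\|_V^2\le a^h(e^h,e^h)$, the same $k$-consistency splitting $a^h(u^I,e^h)=a^h(u^I-u^\pi,e^h)+\sum_T a_T(u^\pi-u,e^h)+a(u,e^h)$, the same reorganization into $R_u(u^I,u^h)$ plus a $\psi^0$-term plus a load mismatch plus a consistency error, and the same treatment of the $\psi^0$-term via subadditivity and relaxed monotonicity to extract $\alpha_\psi c_\Delta^2\|u-u^h\|_V^2$ with residuals controlled by $\|\gamma_\psi(u-u^I)\|_{V_\psi}$ using the uniform bound on $\{u^h\}$. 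Your $(1+\delta)$-splitting of $\|u-u^h\|_V^2$ for the absorption is exactly the paper's use of $\|u-u^h\|_V^2\le c\|u-u^I\|_V^2+(1+\epsilon)\|e^h\|_V^2$, and your explicit discussion of why anchoring in $\|u^I-u^h\|_V^2$ rather than $\|u-u^h\|_V^2$ is essential is a point the paper makes only implicitly.
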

\begin{proof}
We begin by decomposing the error $ e = u - u^h $ into two parts:
\[ e = e^I + e^h, \]
where
\[ e^I := u - u^I, \quad e^h:=u^I - u^h. \]
From \eqref{stability_vem} and the assumption $H(a)$, we obtain
\[ \alpha_*m_a  \|e^h\|_{V}^2 \le \alpha_* a(e^h,e^h) \le a^h (e^h,e^h) =  a^h (u^I,e^h) - a^h (u^h,e^h). \]
Using \eqref{hvh} with $ v^h = u^I $ for an upper bound on the term $- a^h (u^h,e^h)$, we find from 
the above inequality that
\begin{equation}
\alpha_*m_a \|e^h\|_V^2\le a^h (u^I,e^h)-\langle f^h, e^h\rangle +\Phi(u^I)-\Phi(u^h)+I_\Delta(\psi^0(\gamma_\psi u^h;\gamma_\psi u^I-\gamma_\psi u^h)).
\label{6.10a}
\end{equation}
Write 
\[  a^h (u^I,e^h)=\sum_T  a^h_T(u^I,e^h) =\sum_T \left[ a^h_T(u^I-u^{\pi},e^h) +a^h_T(u^{\pi},e^h)\right].\]
By \eqref{P_consistency} and the symmetry of $a^h_T(\cdot,\cdot)$, 
\[ a^h_T(u^{\pi},e^h)=a_T(u^{\pi},e^h).\]
Hence, 
\begin{align*}
\sum_T a^h_T(u^{\pi},e^h) & = \sum_T a_T(u^{\pi},e^h)\\
& = \sum_T a_T(u^{\pi}-u,e^h) + a(u,e^h).
\end{align*}
So from \eqref{6.10a}, 
\begin{align*}
\alpha_*m_a  \|e^h\|_{V}^2 & \le \sum_T\left(a^h_T(u^I-u^{\pi},e^h)+a_T(u^{\pi}-u,e^h)\right)
+a(u,e^h)-\langle f^h,e^h\rangle\\
&\quad{} +\Phi(u^I)-\Phi(u^h)+I_\Delta(\psi^0(\gamma_\psi u^h;\gamma_\psi u^I-\gamma_\psi u^h)),
\end{align*}
which is rewritten as
\begin{equation}
\alpha_*m_a  \|e^h\|_{V}^2 \le R_1 + R_2 + R_3 + R_u(u^I,u^h),
\label{error0}
\end{equation}
where
\begin{align*}
R_1 &= \sum_{T} \left(a^h_T (u^I-u^{\pi},e^h) + a_T (u^{\pi} - u,e^h) \right),\\
R_2& = \langle f, e^h\rangle  -  \langle f^h, e^h\rangle,\\[0.1mm]
R_3& = I_\Delta(\psi^0(\gamma_\psi u^h;\gamma_\psi u^I-\gamma_\psi u^h))
+I_\Delta(\psi^0(\gamma_\psi u;  \gamma_\psi u^h - \gamma_\psi u^I)),\\[0.1mm]
R_u(u^I,u^h)&= a(u,e^h)+\Phi(u^I)-\Phi(u^h)-I_\Delta(\psi^0(\gamma_\psi u;\gamma_\psi u^h-\gamma_\psi u^I))
-\langle f,e^h\rangle.
\end{align*}

Next, we bound the first three terms on the right side of \eqref{error0}.
By \eqref{a1} and \eqref{boundedness_aT}, we get
\begin{equation}
R_1 \leq \alpha^*  L_A \|u^I-u^{\pi}\|_{V,h} \|e^h\|_{V} +  L_A \|u^{\pi}-u\|_{V,h} \|e^h\|_{V}.
\label{errorR1}
\end{equation}
In addition,
\begin{align} \label{errorR2}
R_2  \leq \|f-f^h\|_{(V^h)^*} \|e^h\|_V.
\end{align}
To bound $R_3$, we first apply Proposition \ref{subdiff}\,(ii) on the subadditivity of the generalized directional derivative,
\begin{align*}
\psi^0(\gamma_\psi u^h;\gamma_\psi u^I-\gamma_\psi u^h) 
& \le \psi^0(\gamma_\psi u^h;\gamma_\psi u^I-\gamma_\psi u)+\psi^0(\gamma_\psi u^h;\gamma_\psi u-\gamma_\psi u^h),\\
\psi^0(\gamma_\psi u;\gamma_\psi u^I-\gamma_\psi u^h) 
& \le \psi^0(\gamma_\psi u;\gamma_\psi u^I-\gamma_\psi u)+\psi^0(\gamma_\psi u;\gamma_\psi u-\gamma_\psi u^h).
\end{align*}
By \eqref{eq4} and \eqref{eq12w}, 
\[ I_\Delta(\psi^0(\gamma_\psi u^h;\gamma_\psi u-\gamma_\psi u^h)) 
+I_\Delta(\psi^0(\gamma_\psi u;\gamma_\psi u^h-\gamma_\psi u))\le \alpha_\psi c_\Delta^2 \|u-u^h\|_V^2.\]
Thus,
\[ R_3 \le \alpha_\psi c_\Delta^2 \|u-u^h\|_V^2+
I_\Delta(\psi^0(\gamma_\psi u^h;\gamma_\psi u^I-\gamma_\psi u))
+I_\Delta(\psi^0(\gamma_\psi u;\gamma_\psi u-\gamma_\psi u^I)).\]
By \eqref{eq10w}, 
\begin{align*}
\psi^0(\gamma_\psi u^h;\gamma_\psi u^I-\gamma_\psi u)
& \le c\left(1+|\gamma_\psi u^h|_{\mathbb{R}^m}\right)|\gamma_\psi u^I-\gamma_\psi u|_{\mathbb{R}^m},\\
\psi^0(\gamma_\psi u;\gamma_\psi u-\gamma_\psi u^I)
& \le c\left(1+|\gamma_\psi u|_{\mathbb{R}^m}\right)|\gamma_\psi u-\gamma_\psi u^I|_{\mathbb{R}^m}.
\end{align*}
Then,
\begin{align*}
& I_\Delta(\psi^0(\gamma_\psi u^h;\gamma_\psi u^I-\gamma_\psi u))
+I_\Delta(\psi^0(\gamma_\psi u;\gamma_\psi u-\gamma_\psi u^I))\\
&\qquad \le c\left(1+\|u^h\|_V+\|u\|_V\right)\|\gamma_\psi u-\gamma_\psi u^I\|_{V_\psi}.
\end{align*}
Since $ \| u^h \|_V $ is bounded independent of $h$, we conclude that
\begin{equation}
R_3\le \alpha_\psi c_\Delta^2 \|u- u^h\|_{V}^2 + c\,\|\gamma_\psi (u-u^I)\|_{V_\psi}.
\label{errorR3}
\end{equation}

Combining \eqref{error0}--\eqref{errorR3}, we have a constant $c>0$ such that
\begin{align*}
\|e^h\|_V^2 &\le c\left(\|u^I-u^\pi\|_{V,h}+\|u^\pi-u\|_{V,h}+\|f-f^h\|_{(V^h)^*}\right)\|e^h\|_V\\
&\quad{}+\frac{\alpha_\psi c_\Delta^2}{\alpha_*m_a}\|u-u^h\|_V^2
+c\,\|\gamma_\psi(u-u^I)\|_{V_\psi}+\frac{1}{\alpha_*m_a}\,R_u(u^I,u^h).
\end{align*}
Applying the modified Cauchy-Schwarz inequality \eqref{mCS}, for any small $\epsilon>0$, we have a constant $c$ depending on $\epsilon$ such that
\begin{align}
\left(1-\epsilon\right) \|e^h\|_V^2 & \le c\left(\|u^I-u^\pi\|_{V,h}^2+\|u^\pi-u\|_{V,h}^2
+\|f-f^h\|_{(V^h)^*}^2 + \|\gamma_\psi(u-u^I)\|_{V_\psi}\right)\nonumber\\
&\quad{} +\frac{\alpha_\psi c_\Delta^2}{\alpha_*m_a}\,\|u-u^h\|_V^2+\frac{1}{\alpha_*m_a}\,R_u(u^I,u^h). 
\label{6.19a}
\end{align}
From the triangle inequality
\begin{equation} 
\|u-u^h\|_V\le\|u-u^I\|_V+\|e^h\|_V
\label{6.19b}
\end{equation}
and the modified Cauchy-Schwarz inequality, we have 
\begin{equation} 
\|u-u^h\|_V^2\le c\,\|u-u^I\|_V^2 + \left(1+\epsilon\right) \|e^h\|_V^2.
\label{6.19c}
\end{equation}
Hence, from \eqref{6.19a},
\begin{align*}
& \left(1-\epsilon - \frac{\alpha_\psi c_\Delta^2}{\alpha_*m_a}\,(1+\epsilon) \right) \|e^h\|_V^2 \\
&\qquad \le c\left(\|u-u^I\|_V^2 + \|u^I-u^\pi\|_{V,h}^2+\|u^\pi-u\|_{V,h}^2
+\|f-f^h\|_{(V^h)^*}^2 + \|\gamma_\psi(u-u^I)\|_{V_\psi}\right)\\
&\qquad \quad{} +\frac{1}{\alpha_*m_a}\,R_u(u^I,u^h). 
\end{align*}
Since $\alpha_\psi c_\Delta^2<\alpha_*m_a$, we can choose $\epsilon>0$ small enough and deduce 
from the above inequality that
\begin{align*}
\|e^h\|_V^2 & \le c\left(\|u-u^I\|_V^2 + \|u^I-u^\pi\|_{V,h}^2+\|u^\pi-u\|_{V,h}^2
+\|f-f^h\|_{(V^h)^*}^2 + \|\gamma_\psi(u-u^I)\|_{V_\psi}\right)\\
&\quad{} +\frac{2}{\alpha_*m_a}\,R_u(u^I,u^h). 
\end{align*}
Finally, the bound \eqref{err_general} follows from an application of \eqref{6.19c}.  \hfill
\end{proof}

\section{Virtual element method for contact problems}\label{VEM:contact}

We now apply the VEM to solve the contact problems.  For this purpose, we construct the virtual element space 
$V^h \subset V$, along with the corresponding bilinear form $ a^h $ and right-hand side $ f^h $ satisfying $H(h)$.
The discussion in this section is restricted to the spatial dimension $d=2$.

Consider a family of partitions $\{ \mathcal{T}^h \}$ of the closure $\overline{\Omega}$ into elements $ T $. Let $ h_T = \mathrm{diam}(T) $ and $ h = \max\{h_T: T \in \mathcal{T}^h \} $. Define $ E^h_0 $ as the set of edges that do not lie on $ \Gamma_D $ and $ P^h_0 $ as the set of vertices not on $ \Gamma_D $.

Following \cite{beirao2013basic, beirao2013virtual, beirao2016stability}, we make the following assumption:
\begin{assumption}\label{assumption:mesh}
There exists a constant $\delta>0$ such that for each $h$ and every $T\in {\cal T}^h$,
\begin{itemize}
\item $T$ is star-shaped with respect to a ball of radius $\delta h_T$;
\item The distance between any two vertices of $T$ is at least $\delta h_T$.
\end{itemize}
\end{assumption}

\subsection{Construction of the virtual element space}

Let $T$ be a polygon with $n$ edges. For $k\geq1$, we define the local finite dimensional space $W^h_T$ on the element $T$ as
\begin{equation}\label{VhTc}
\bW^h_T:=\{\bv\in H^1(T;\real^2)\mid\nabla\cdot{\cal E}\bvarepsilon({\bv})\in\mathbb P_{k-2}(T;\real^2),
\,\bv|_{\partial T}\in C^0(\partial T), \,\bv|_e\in\mathbb P_{k}(e;\real^2)~\forall\, e\subset\partial T\}
\end{equation}
with the convention that $\mathbb P_{-1}(T)=\{0\}$.
For each $\bv\in W^h_T$, we define the following degrees of freedom:
\begin{align}
&\bullet~\mathrm{The}~\mathrm{values~of}~ \bv(\ba)~~~\forall\,\mathrm{vertex}~\ba\in T,\label{lfd_c1}\\
&\bullet~\mathrm{The~moments}~ \int_e\bq\cdot\bv\, ds~~~\forall\, \bq\in\mathbb P_{k-2}(e;\real^2)~~~\forall\,\mathrm{edge}~ e\subset\partial T, ~~~ k\geq 2,\label{lfd_c3}\\
&\bullet~\mathrm{The~moments}~ \int_T \bq\cdot\bv\, dx~~~\forall\, \bq\in\mathbb P_{k-2}(T;\real^2), ~~~k\geq 2.\label{lfd_c5}
\end{align}

For any partition $\mathcal T^h$ and $k\geq 1$, we define the global virtual element space
\begin{equation}\label{Vhc}
\bW^h:=\{\bv\in \bW \mid \bv|_T\in W^h_T\quad \forall \, T\in\mathcal T^h\},
\end{equation}
with global degrees of freedom for $\bv\in \bW^h$ given by:
\begin{align}
&\bullet~\mathrm{The}~\mathrm{values~of}~\bv(\ba)\quad\forall\,\mathrm{vertex}~\ba\in P^h_0,\label{gfd_c1}\\
&\bullet~\mathrm{The~moments}~ \int_e\bq\cdot\bv\, ds\quad\forall\, \bq\in\mathbb P_{k-2}(e;\real^2)
\quad\forall\,\mathrm{edge}~ e\in E^h_0,\, k\geq 2,\label{gfd_c3}\\
&\bullet~\mathrm{The~moments}~ \int_T \bq\cdot\bv\, dx\quad\forall\, \bq\in\mathbb P_{k-2}(T;\real^2)\quad
\forall\, \mathrm{element}~T\in \mathcal T^h,\, k\geq 2.\label{gfd_c5}
\end{align}
It is shown in \cite{beirao2013virtual} that the degrees of freedom \eqref{gfd_c1}--\eqref{gfd_c5} 
are unisolvent for $\bW^h$.

Let $ \chi_i $ represent the $ i $-th degree of freedom for $ \bW^h $, where $ i = 1, 2, \ldots, N_{\mathrm{dof}} $. 
Due to the unisolvence of the degrees of freedom for $\bW^h$, for any sufficiently smooth function $\bw$, 
there exists a unique element $ \bw^I \in W^h $ such that
$$\chi_i(\bw-\bw^I)=0,~~~i=1,2,\ldots,N_{\rm dof}.$$
By a scaling argument and the Bramble-Hilbert lemma, the following approximation property holds (\cite{beirao2013virtual}):
\begin{equation}\label{g_approximation_c}
\|\bw-\bw^I\|_{H^j(\Omega)}\leq c\,h^{l-j}|\bw|_{H^l(\Omega)},\quad j=0,1,\  2\leq l\leq k+1.
\end{equation}
Moreover, for each $T\in\mathcal T^h$ and $\bw\in H^l(T;\real^2)$, there exists
$\bw^\pi\in\mathbb P_k(T;\real^2)$ such that (\cite{BS2008,beirao2013virtual})
\begin{equation}\label{l_approximation_c}
\|\bw-\bw^\pi\|_{H^j(T)}\leq c\,h^{l-j}_T|\bw|_{H^l(T)},~~~j=0,1,\quad 1\leq l\leq k+1.
\end{equation}

\subsection{Construction of $a^h$ and $\fb^h $}

Using the approaches in \cite{beirao2013virtual, wriggers2016virtual}, we construct a symmetric and computable discrete bilinear form $ a^h $ and discrete linear form $f^h$ so that $H(h)$ is valid.

For any element $T$, denote by $ n_V^T $ the number of vertices and by $N^{\rm dof}_T$ the number
of degrees of freedom.  Also, let $a_T(\cdot,\cdot)$ be the restriction of $a(\cdot,\cdot)$ on $T$.
Following \cite{wriggers2016virtual}, we first introduce a projection operator
$\Pi_k^{T}\colon \bW^h_T\rightarrow\mathbb{P}_{k}(T;\real^2)$ defined by
\begin{align}
a_T(\Pi_k^{T}\bv^h,\bq) &= a_T(\bv^h,\bq)\quad\forall\,\bq\in \mathbb{P}_{k}(T;\real^2), \label{projection_gradient}\\
\frac{1}{n_V^T}\sum_{i=1}^{n_V^T}\Pi_k^{T}\bv^h(\bx_i)&=\frac{1}{n_V^T}\sum_{i=1}^{n_V^T}\bv^h(\bx_i),\label{7.11a}\\
\frac{1}{n_V^T}\sum_{i=1}^{n_V^T}\bx_i\times\Pi_k^{T}\bv^h(\bx_i)&=\frac{1}{n_V^T}\sum_{i=1}^{n_V^T}\bx_i\times\bv^h(\bx_i),\label{7.11b}
\end{align}
where $\bx_i$ denotes the coordinates of the vertices of $T$. Here, ``$\times$" denotes the cross product of two vectors.

We then define the local bilinear form
\begin{equation}\label{ah}
a^h_T(\bu^h,\bv^h): = a_T(\Pi_k^{T}\bu^h,\Pi_k^{T}\bv^h) + S_{T}\big((I-\Pi_k^{T})\bu^h,(I-\Pi_k^{T})\bv^h\big)
\quad\forall\, \bu^h,\bv^h\in \bW^h_T,
\end{equation}
where
\[ S_{T}(\bu^h,\bv^h) = \sum_{i=1}^{N^{\rm dof}_T} \chi_i(\bu^h)\; \chi_i(\bv^h) \]
is the stabilization term. The bilinear form 
\[ a^h(\bu^h,\bv^h)=\sum_{T\in\mathcal T^h}a^h_T(\bu^h,\bv^h)\]
ensures properties \eqref{P_consistency} and \eqref{stability_ahT}.
Other constructions of $a^h$ that meet these criteria can also be applied, such as the bilinear form proposed in \cite{artioli2017arbitrary} and used in \cite{FHH19}.

The term $(\fb_0,\bv)_{L^2(\Omega;\mathbb{R}^2)}$ in \eqref{cont9h} is not computable for
$\bv\in W^h$, and we approximate $\fb_0$ by $\fb^h_0$ constructed as follows.  
For $ k \geq 2 $, we define $ \fb_0^h $ such that
\[ \fb_{0T}^{h} :=\fb_0^h|_T = P_{k-2}^T \fb_0 \quad\forall\, T\in\mathcal T^h \]
is the $L^2(T)$-projection of $\fb_0$ onto the space of polynomials of order $k-2$ on each element $T$. 
Then we define
\[ \langle \fb_0^h,\bv^h\rangle=\sum_{T\in\mathcal T^h}\int_T\fb_{0T}^h\cdot\bv^h\,dx\quad\forall\,\bv^h\in \bW^h. \]
For $k=1$, we choose
\[ \fb_{0T}^{h}:=\fb_0^h|_T=P_{0}^T \fb_0\quad\forall\, T\in\mathcal T^h \]
to be the mean value of $\fb_0$ on $T$,  and define
\[ \langle \fb_0^h,\bv^h\rangle=\sum_{T\in\mathcal T^h}\int_T\fb_{0T}^{h}\cdot\bar{\bv^h}\,dx
\quad\forall\, \bv^h\in \bW^h,\]
where $\bar{\bv^h}$ represents the average value of $\bv^h$ over all vertices of $T$.

To approximate the right-hand side term $\langle\fb, \bv\rangle_{{\boldsymbol W}^*\times {\boldsymbol W}}$, we set
\[  \langle\fb^h, \bv^h\rangle=\langle \fb_{0}^{h} ,\bv^h\rangle+(\fb_2,\bv^h)_{L^2(\Gamma_2;\mathbb{R}^2)}
\quad \forall\,\bv\in \bW^h. \]
This setup ensures the optimal order error bound (\cite{beirao2013virtual}):
\begin{equation}\label{f_fh}
\|\fb-\fb^h \|_{(W^h)*}\leq c\,h^{k} |\fb|_{H^{k-1}(\Omega)}.
\end{equation}

\subsection{Error analysis for contact problems}\label{s4}

We apply the framework developed in Section \ref{sec:VEM} to perform error estimation for 
virtual element solutions of the three static contact problems.

\noindent{\bf VEM for Problem \ref{prob:cont1}}

The function space associated with the virtual element method is defined as:
\begin{align}\label{cont11a-1}
\bV^h = \left\{ \bv^h \in \bW^h \mid \bv^h =\bzero \, \text{ on } \, \Gamma_D \right\}.
\end{align}
The virtual element scheme for Problem \ref{prob:cont1} is formulated as follows:

\begin{problem}\label{p1h_vem}
{\it Find a displacement field $\bu^h\in \bV^h$ such that}
\begin{align}
& a^h(\bu^h,\bv^h - \bu^h)+ \int_{\Gamma_C}f_b\left(|\bv^h_\tau|-|\bu^h_\tau|\right) ds
+\int_{\Gamma_C}\psi^0_\nu(u_\nu^h;v_\nu^h-u_\nu^h)\,ds\ge\langle\fb^h, \bv^h-\bu^h\rangle\quad\forall\,\bv^h\in\bV^h.
\label{s7.2a-1}
\end{align}
\end{problem}

To apply Theorem \ref{thm:main}, we estimate the residual term
\begin{align*}
R_{\boldsymbol u}(\bu^I,\bu^h) & = \int_\Omega {\cal E}(\bvarepsilon(\bu)):\bvarepsilon(\bu^I - \bu^h)\,dx
+ \int_{\Gamma_C}f_b\left(|\bu^I_\tau|-|\bu^h_\tau|\right) ds\\
&{}\quad -\int_{\Gamma_C} \psi^0_\nu (u_\nu; u_\nu^h - u_\nu^I)\,ds - \langle\fb, \bu^I-\bu^h\rangle.
\end{align*}
Using relations \eqref{s7.2d} and \eqref{s7.2e}, similar to \eqref{s7.2f}, we derive
\begin{equation}\label{weak_rel}
\int_\Omega {\cal E}(\bvarepsilon(\bu)):\bvarepsilon(\bu^I-\bu^h)\,dx=
\langle\fb,\bu^I-\bu^h\rangle+\int_{\Gamma_C}\bsigma\bnu{\cdot}(\bu^I-\bu^h)\,ds.
\end{equation}
Thus,
\begin{align}
R_{\boldsymbol u}(\bu^I,\bu^h) & =\int_{\Gamma_C} (\bsigma \bnu)\cdot (\bu^I - \bu^h) \,ds
+ \int_{\Gamma_C}f_b\left(|\bu^I_\tau|-|\bu^h_\tau|\right) ds
-\int_{\Gamma_C} \psi^0_\nu (u_\nu; u_\nu^h - u_\nu^I)\,ds\nonumber\\
&= \int_{\Gamma_C}\sigma_\nu (u^I_\nu - u^h_\nu)\,ds+\int_{\Gamma_C} \bsigma_{\tau}\cdot(\bu^I_\tau-\bu^h_\tau)\,ds\nonumber\\
&{}\quad + \int_{\Gamma_C}f_b\left(|\bu^I_\tau|-|\bu^h_\tau|\right) ds
-\int_{\Gamma_C} \psi^0_\nu (u_\nu; u_\nu^h - u_\nu^I)\,ds.
\label{Re1}
\end{align}

To proceed further, we continue the arguments presented in Subsection \ref{subsec:cont1} to derive 
pointwise relations for the weak solution. We assume the solution regularities \eqref{s7.2b}.
Recalling \eqref{s7.2d} and \eqref{s7.2e}, we can derive from 
\eqref{cont7} that
\begin{align}
& I_{\Gamma_C}\left( \sigma_\nu (v_\nu - u_\nu )+\psi^0(u_\nu;v_\nu-u_\nu)
+ \bsigma_{\tau} \cdot (\bv_\tau - \bu_\tau)+f_b(|\bv_\tau|-|\bu_\tau|)\right)\ge 0\quad\forall\,\bv\in \bV.
\label{Re2}
\end{align}
By the independence of the normal component and the tangential component of an arbitrary vector 
field $\bv\in \bV$ and the densities of $\{v_\nu\mid \bv\in\bV\}$ in $L^2(\Gamma_C)$ and of
$\{\bv_\tau\mid \bv\in\bV\}$ in $L^2(\Gamma_C)^2$, we conclude from \eqref{Re2} that a.e.\ on $\Gamma_C$, 
\begin{align}
& \sigma_\nu (z - u_\nu )+\psi^0(u_\nu;z-u_\nu)\ge 0\quad\forall\,z\in L^2(\Gamma_C), \label{Re3}\\
& \bsigma_{\tau} \cdot (\bz - \bu_\tau)+f_b(|\bz|-|\bu_\tau|)\ge 0\quad\forall\,\bz\in L^2(\Gamma_C)^2. \label{Re4}
\end{align}
Taking $\bz=\bzero$ and $2\,\bu_\tau$ in \eqref{Re4}, we see that \eqref{Re4} is equivalent to 
\begin{equation}
\bsigma_{\tau}\cdot\bu_\tau+f_b |\bu_\tau|=0,\quad \bsigma_{\tau} \cdot\bz+f_b |\bz|\ge 0
\quad\forall\,\bz\in L^2(\Gamma_C)^2. \label{Re5}
\end{equation}
Then,
\begin{align}
&\int_{\Gamma_C} \bsigma_{\tau} \cdot (\bu^I_\tau - \bu^h_\tau) \ ds + \int_{\Gamma_C}f_b\left(|\bu^I_\tau|-|\bu^h_\tau|\right) ds\nonumber\\
& \qquad =\int_{\Gamma_C} \bsigma_{\tau} \cdot (\bu^I_\tau - \bu_\tau) \ ds
+ \int_{\Gamma_C} \bsigma_{\tau} \cdot (\bu_\tau - \bu^h_\tau) \ ds + \int_{\Gamma_C}f_b\left(|\bu^I_\tau|-|\bu^h_\tau|\right) ds\nonumber\\
& \qquad\le\int_{\Gamma_C}f_b |\bu^I_\tau - \bu_\tau|\,ds + \int_{\Gamma_C}f_b\left(|\bu^h_\tau|-|\bu_\tau|\right)ds
+ \int_{\Gamma_C}f_b\left(|\bu^I_\tau|-|\bu^h_\tau|\right) ds\nonumber\\
& \qquad \le 2\int_{\Gamma_C}f_b|\bu^I_\tau-\bu_\tau|\,ds \le 2\|f_b\|_{L^2(\Gamma_C)}\|\bu^I_\tau-\bu_\tau\|_{L^2(\Gamma_C)^2}.\label{fri_esti}
\end{align}
Furthermore, we derive from \eqref{Re3} that 
\begin{equation}
\sigma_\nu z +\psi^0(u_\nu;z)\ge 0\quad\forall\,z\in \mathbb{R},\ {\rm a.e.\ on}\ \Gamma_C.\label{Re6}
\end{equation}
Hence, 
\begin{align*}
\int_{\Gamma_C} -\sigma_\nu (u^h_\nu - u^I_\nu ) \, ds
-\int_{\Gamma_C} \psi^0_\nu (u_\nu; u_\nu^h - u_\nu^I)\,ds
&\le \int_{\Gamma_C}\psi^0_\nu(u_\nu;u_\nu^h - u_\nu^I)\,ds-\int_{\Gamma_C}\psi^0_\nu(u_\nu;u_\nu^h-u_\nu^I)\,ds\\
& =0.
\end{align*}
Thus, applying Theorem \ref{thm:main}, we obtain
\begin{equation}\label{cea1}
\|\bu-\bu^h\|_{\boldsymbol V} \le c \left(\|\bu-\bu^I\|_{\boldsymbol V} + \|\bu-\bu^{\pi}\|_{{\boldsymbol V},h} 
+ \|\fb-\fb^h\|_{({\boldsymbol V}^h)^*}+  \| \bu_\tau - \bu^I_\tau\|_{L^2(\Gamma_C)^2}^{1/2}\right).
\end{equation}
Let $k=1$ and assume solution regularities \eqref{s7.2b} and \eqref{s7.2i}.  Recall the approximation properties \eqref{g_approximation_c}, \eqref{l_approximation_c}, and \eqref{f_fh}.  In addition, we have the analog of \eqref{5.20a} in the setting of VEM:
\[ \|\bu-\bu^I\|_{L^2(\Gamma_C)^2} \le c\,h^2.\]
Thus, we conclude that for $k=1$, the optimal order error bound is valid under solution regularities \eqref{s7.2b} and \eqref{s7.2i}
\begin{equation}
\|\bu-\bu^h\|_{\boldsymbol V}\le c\,h.
\label{cont13}
\end{equation}

\smallskip

\noindent{\bf VEM for Problem \ref{prob:cont2}}\label{subsec2}

The function space associated with the virtual element method is:
\begin{align}\label{cont11a}
& \bV^h_1=\left\{\bv^h\in \bW^h \mid \bv^h=\bzero\ {\rm on\ }\Gamma_D,\; v^h_\nu=0\ {\rm on\ }\Gamma_C\  \right\}.
\end{align}

The virtual element scheme for Problem \ref{prob:cont2} is formulated as follows:

\begin{problem}\label{p2h_vem}
{\it Find a displacement field $\bu^h\in \bV^h_1$ such that}
\begin{align}
& a^h(\bu^h,\bv^h - \bu^h)
+\int_{\Gamma_C} \psi^0_\tau (\bu_\tau^h; \bv_\tau^h-\bu_\tau^h)\,ds\ge \langle\fb^h, \bv^h-\bu^h\rangle \quad\forall\,\bv^h\in \bV^h_1.
\label{s7.2a-2}
\end{align}
\end{problem}

Following a similar approach as for Problem \ref{p1h_vem}, we can show that $R_{\boldsymbol u}(\bu^I,\bu^h)\le 0 $.
Consequently, the optimal order error bound for $ k = 1 $ is
\begin{equation*}
\|\bu-\bu^h\|_{\boldsymbol V}\le c\,h
\end{equation*}
under the regularity assumptions \eqref{s7.2b}.

\smallskip

\noindent{\bf VEM for Problem \ref{prob:cont3}}\label{subsec3}

To approximate the admissible set $\bU$, we define:
\begin{equation}\label{cont11U}
\bU^h=\left\{\bv^h\in \bV^h \mid v^h_\nu\le g\ {\rm at\ node\ points\ on\ }\overline{\Gamma_C}\right\}.
\end{equation}
Assuming that $ g $ is a concave function, we have $\bU^h \subset \bU$. The following numerical method is proposed for Problem \ref{prob:cont3}.

\begin{problem}\label{p3h_vem}
{\it Find a displacement field $\bu^h\in \bU^h$ such that}
\begin{equation}
a^h(\bu^h,\bv^h-\bu^h)+ \int_{\Gamma_C}f_b\left(|\bv^h_\tau|-|\bu^h_\tau|\right) ds
+\int_{\Gamma_C}\psi_\tau^0 (u_\nu^h; v_\nu^h-u_\nu^h)\,ds\ge\langle \fb^h,\bv^h-\bu^h\rangle\quad\forall\,\bv\in \bU^h.
\label{cont9_33}
\end{equation}
\end{problem}

We apply Theorem \ref{thm:main} to derive an error estimate.  The key step is to bound
the residual term
\begin{align*}
R_{\boldsymbol u}(\bu^I,\bu^h) &= \int_\Omega {\cal E}(\bvarepsilon(\bu)):\bvarepsilon(\bu^I - \bu^h)\,dx
+ \int_{\Gamma_C}f_b\left(|\bu^I_\tau|-|\bu^h_\tau|\right) ds\\
&{}\quad -\int_{\Gamma_C} \psi^0_\nu (u_\nu; u_\nu^h - u_\nu^I)\,ds - \langle\fb, \bu^I-\bu^h\rangle. 
\end{align*}
By a similar argument, we can derive the relation \eqref{weak_rel} as well, so
\begin{align*}
R_{\boldsymbol u}(\bu^I,\bu^h)&=\int_{\Gamma_C} (\bsigma \nu)\cdot (\bu^I - \bu^h) \, ds
+ \int_{\Gamma_C}f_b\left(|\bu^I_\tau|-|\bu^h_\tau|\right) ds
-\int_{\Gamma_C} \psi^0_\nu (u_\nu; u_\nu^h - u_\nu^I)\,ds\\
& =\int_{\Gamma_C} \sigma_\nu (u^I_\nu - u^h_\nu)\,ds+\int_{\Gamma_C}\bsigma_{\tau}\cdot(\bu^I_\tau-\bu^h_\tau)\,ds\\
&{}\quad + \int_{\Gamma_C}f_b\left(|\bu^I_\tau|-|\bu^h_\tau|\right) ds
-\int_{\Gamma_C} \psi^0_\nu (u_\nu; u_\nu^h - u_\nu^I)\,ds.
\end{align*}
Using an argument similar to \eqref{fri_esti}, we can deduce that
\begin{align*}
&\int_{\Gamma_C}\bsigma_{\tau}\cdot(\bu^I_\tau-\bu^h_\tau)\,ds+\int_{\Gamma_C}f_b\left(|\bu^I_\tau|-|\bu^h_\tau|\right) ds
\leq 2 \|f_b\|_{L^2(\Gamma_C)}  \|\bu^I_\tau - \bu_\tau\|_{L^2(\Gamma_C)^2}.
\end{align*}
Furthermore, we consider
\begin{align*}
\sigma_\nu(u^I_\nu-u^h_\nu) & = (\sigma_\nu+\xi_\nu) (u^I_\nu-u^h_\nu) -  \xi_\nu (u^I_\nu - u^h_\nu) \\
&=(\sigma_\nu + \xi_\nu) (u^I_\nu - u_\nu) + (\sigma_\nu + \xi_\nu) (u_\nu - g) + (\sigma_\nu + \xi_\nu) (g - u^h_\nu)-  \xi_\nu (u^I_\nu - u^h_\nu)\\
& \le (\sigma_\nu + \xi_\nu) (u^I_\nu - u_\nu) + \xi_\nu (u^h_\nu - u^I_\nu).
\end{align*}
Here, we use the fact that $(\sigma_\nu+\xi_\nu)(u_\nu-g)=0$ and $u_\nu^h\le g$, given that $\bu^h\in\bU^h\subset\bU$.
Since $\xi_\nu\in\partial \psi_\nu(u_\nu)$, we get
\begin{align*}
\int_{\Gamma_C}\sigma_\nu(u^I_\nu- u^h_\nu)\,ds-\int_{\Gamma_C}\tilde{\psi}^0_\nu(u_\nu;u_\nu^h-u_\nu^I)\,ds
&\le \int_{\Gamma_C} (\sigma_\nu+\xi_\nu)(u^I_\nu-u_\nu)\,ds+\int_{\Gamma_C}\xi_\nu(u^h_\nu-u^I_\nu)\,ds\\
&\quad{} -\int_{\Gamma_C} \psi^0_\nu (u_\nu; u_\nu^h - u_\nu^I)\,ds\\
&\le c \,\|u_\nu-u^I_\nu\|_{L^2(\Gamma_C)}.
\end{align*}

Finally, the optimal order error bound for $ k = 1 $ is
\begin{equation*}
\|\bu-\bu^h\|_{\boldsymbol V}\le c\,h,
\label{cont39}
\end{equation*}
under the regularity assumptions \eqref{s7.2b} and \eqref{s7.2i}.

\begin{remark}
In the above analysis, we assumed $ g $ to be a concave function. However, this assumption can be removed by applying the argument in \cite{FHH21a}. For simplicity, we retain this assumption here.
\end{remark}

\section{Numerical examples}\label{sec:ex}

In this section, we report numerical simulation results on sample contact problems, by applying both
the finite element method and the virtual element method.  In all the examples, we let  $\Omega$  
be the unit square: $\Omega = (0,1)\times (0,1)\subset\mathbb{R}^2$, and split the boundary into three parts:
\[ \Gamma_D=[0,1]\times\{1\},\quad \Gamma_N = (\{0\}\times(0,1))\cup(\{1\}\times(0,1)),
\quad \Gamma_C = [0,1]\times\{0\}. \]
The domain $\Omega$ is the initial configuration of an elastic body.  We adopt the linear elasticity 
constitutive law
\begin{equation}
\bsigma = {\cal E}\bvarepsilon(\bu)\quad \text{ in }\Omega,\label{M1_1}
\end{equation}
where
\[ ({\cal E} \btau)_{ij} = \frac{E \kappa}{(1+\kappa)(1-2\kappa)}(\tau_{11}+\tau_{22})\delta_{ij}
	+\frac{E}{1+\kappa}\tau_{ij}, \quad 1\le i,j\le 2,\ \forall\,\btau \in \mathbb{S}^2. \]
A volume force of density $\fb_0$ is applied to the elastic body and the equilibrium equation is
\begin{equation}
\text{Div}\,\bsigma +\fb_0 = \bzero \quad\text{ in }\Omega.  \label{M1_2}
\end{equation}
The $\Gamma_D$ part of the boundary is fixed, 
\begin{equation}
\bu = \bzero \quad \text{ on }\Gamma_D,\label{M1_3}
\end{equation}
and the $\Gamma_N$ part of the boundary is subject to the action of a traction force of the density $\fb_2$:
\begin{equation}
\bsigma \bnu = \fb_2  \quad \text{ on }\Gamma_N.  \label{M1_4}
\end{equation}
Different boundary conditions will be considered on the contact boundary $\Gamma_C$.
The physical setting of the problem is as depicted in Figure \ref{fig_config}.

\begin{figure}[htb]\vspace*{-1em}
	\begin{center}
		\includegraphics[width=8cm,height=6.5cm]{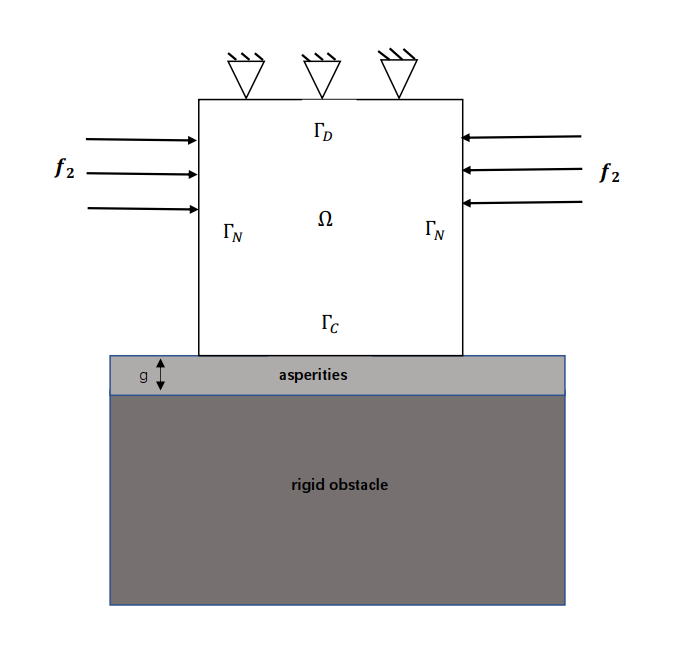}
	\end{center}\vspace*{-3em}
	\caption{Physical setting}\label{fig_config}
\end{figure}

In the numerical experiments, uniform triangulations of the domain $\Omega$ are used for the linear 
triangular finite elements (FEM). The uniform square partitions of the domain $\Omega$ are used for the 
lowest-order (i.e., $k=1$) virtual element method (VEM). The boundary of the spatial domain is divided
into $1/h$ equal parts, and $h$ is used as the discretization parameter. In order to illustrate that 
the VEM can be applied to polygonal meshes, we present the deformed meshes on the Voronoi meshes, 
which are generated by the MATLAB toolbox - PolyMesher introduced in \cite{TPPM12}. The corresponding 
deformed meshes are presented in Figure \ref{fig12}, Figure \ref{fig22} and Figure \ref{fig32}.
In the following numerical examples, we choose $f_b=0$ in the friction conditions, i.e., we consider 
the frictionless contact.

The relative errors of the numerical solutions in the $H^1$-norm, i.e.,
\[ \frac{\|\bu_{\text{ref}}-\bu^h\|_{\boldsymbol V}} {\|\bu_{\text{ref}}\|_{\boldsymbol V}} \]
will be used to compute the numerical convergence orders of the numerical solutions for the 
linear FEM and the lowest order VEM on the square meshes.
For both FEM and VEM, we take the numerical solution with $h = 1/512$ 
as the ``reference'' solution in computing the errors of numerical solutions on coarse meshes.

\begin{example} \label{nexample1}
{\rm In this example, we consider a bilateral contact problem with friction.  Let the contact conditions on $\Gamma_3$ be
\[ u_\nu = 0,\qquad -\bsigma_\tau \in \partial \psi_\tau(\bu_\tau), \]
where
\[  \psi(\bz) = \int_0^{|{\boldsymbol z}|}\mu(t)\, dt,\quad  \mu(r)  = (a-b)e^{-\beta r} +b.  \]
Note that the contact condition $-\bsigma_\tau \in \partial \psi(\bu_\tau)$ is equivalent to
\[ |\bsigma_\tau| \le \mu(0)\ \text{ if } \bu_\tau = \bzero,
\qquad-\bsigma_\tau = \mu(|\bu_\tau|)\,\frac{\bu_\tau}{|\bu_\tau|}\ \text{ if } \bu_\tau \neq \bzero. \]

The parameters are given as follows:
\begin{align*}
&E = 2000\,kg/cm^2,\quad \kappa = 0.3,\\
&a=3\times 10^{-3}, \quad b= 2.5\times 10^{-3}, \quad \beta = 2\times 10^{3},\\
&\fb_0 = \left(0,-0.05\right)\,kg/cm^2,\\
&\fb_2 =
\left\{
\begin{array}{cc}
\left(800,0\right)\,kg/cm& \text{ on } \{0\}\times [0.5,1),\\
\left(-800,0\right)\,kg/cm& \text{ on } \{1\}\times [0.5,1).\\
\end{array}
\right.
\end{align*}
}
\end{example}
We illustrate the numerical performance of both the virtual element method and the linear finite element method. 
In the VEM, we present the numerical solution on square mesh for different values of mesh numbers $N$ in 
Figure \ref{fig11}. In Figure \ref{fig12}, we present the initial and deformed Voronoi meshes 
corresponding to $N=8000$ for the VEM. Numerical solutions obtained by linear FEM on uniform triangulation 
and lowest order VEM on the square grid along the tangential direction on the boundary $[0,1] \times \{0\}$
are shown in Figure \ref{fig14}.  \hfill$\Box$

\begin{figure}[htb]
	\setlength{\abovecaptionskip}{0.cm}
	\setlength{\belowcaptionskip}{-0.cm}
	\centering
	\begin{minipage}[t]{0.5\linewidth}
		\centering
		\includegraphics[height=3.5cm,width=5cm]{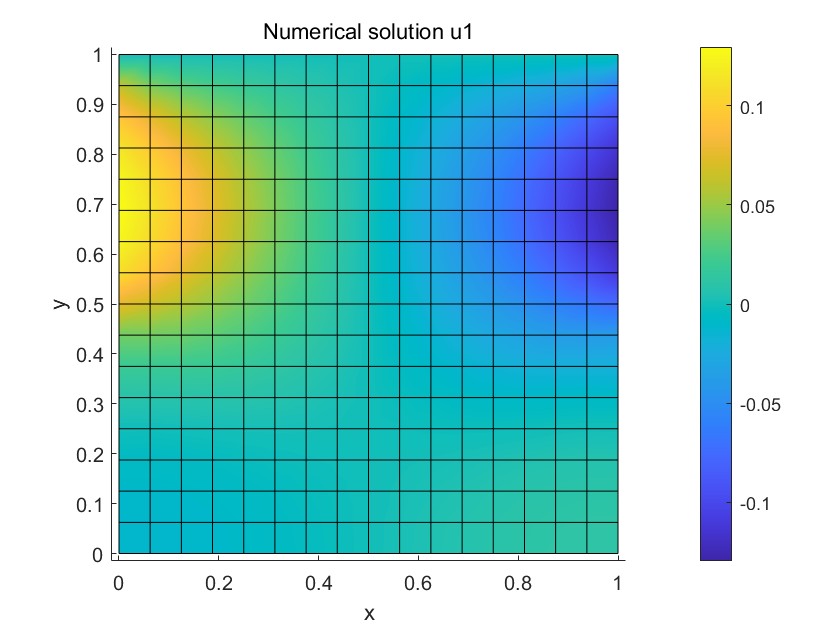}
	\end{minipage}%
	\begin{minipage}[t]{0.6\linewidth}
		\centering
		\includegraphics[height=3.5cm,width=5cm]{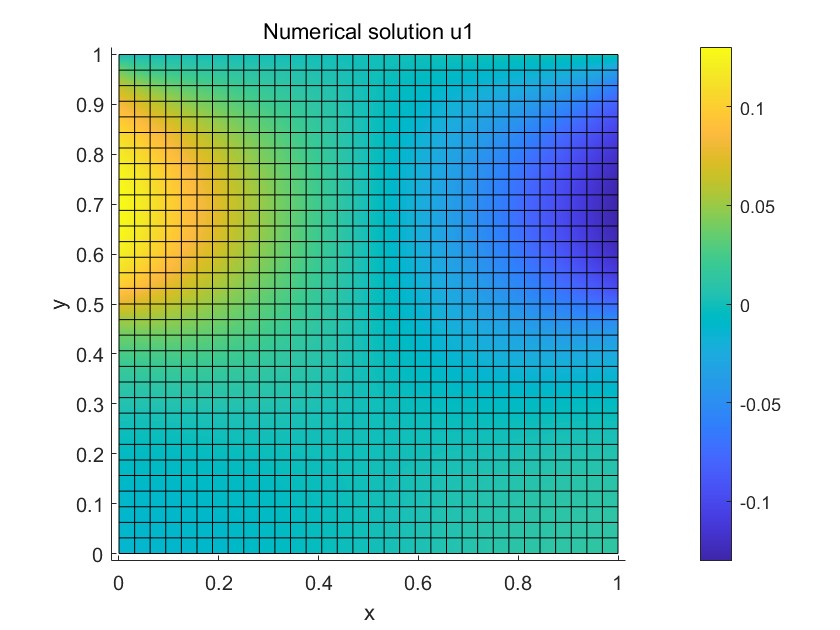}
	\end{minipage}
	\begin{minipage}[t]{0.5\linewidth}
		\centering
		\includegraphics[height=3.5cm,width=5cm]{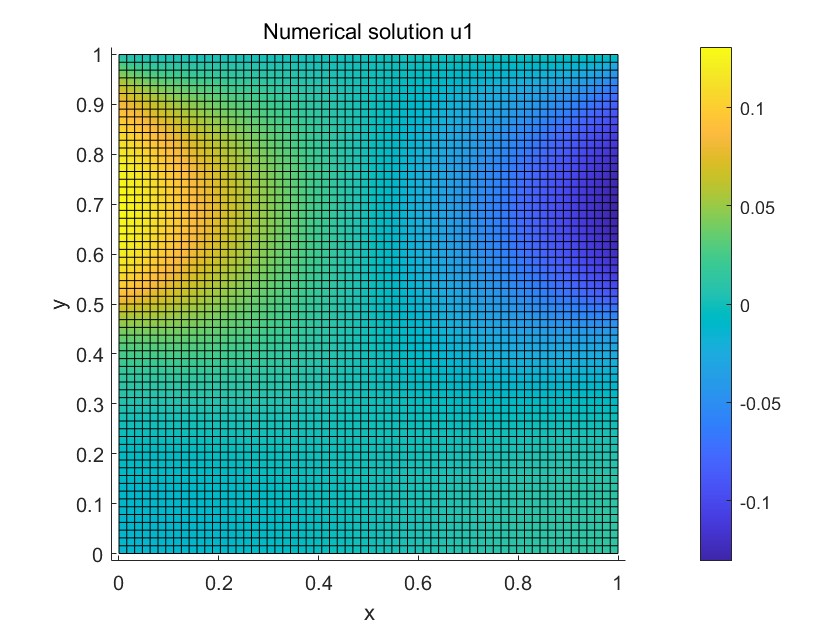}
	\end{minipage}%
	\begin{minipage}[t]{0.6\linewidth}
		\centering
		\includegraphics[height=3.5cm,width=5cm]{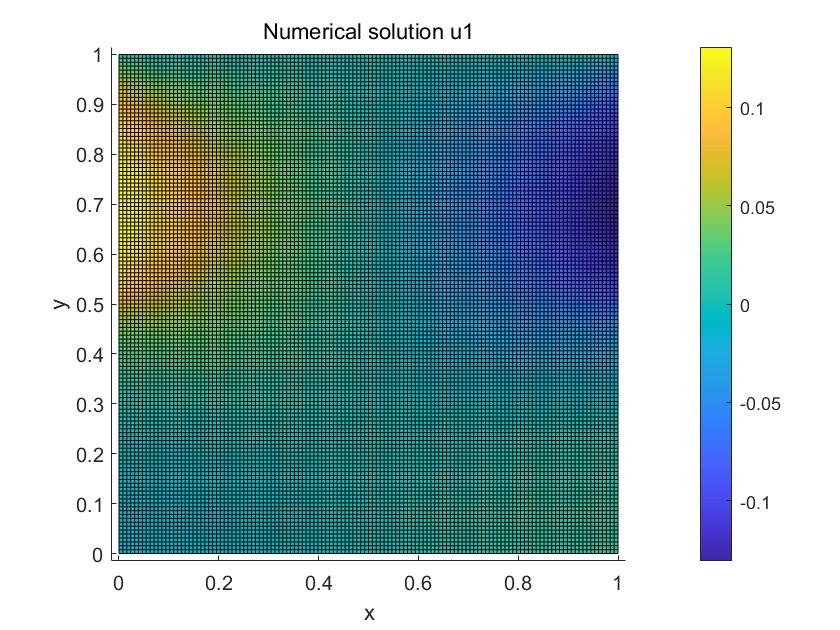}
	\end{minipage}
	\caption{Example \ref{nexample1}: Numerical solutions with $N$ elements: $N=256$ (upper left),
		$N=1024$ (upper right), $N=4096$ (bottom left) and $N=16384$ (bottom right).}
	\label{fig11}
\end{figure}

\begin{figure}[!htb]
	\centering
	\subfigure[]{\includegraphics[scale=0.2]{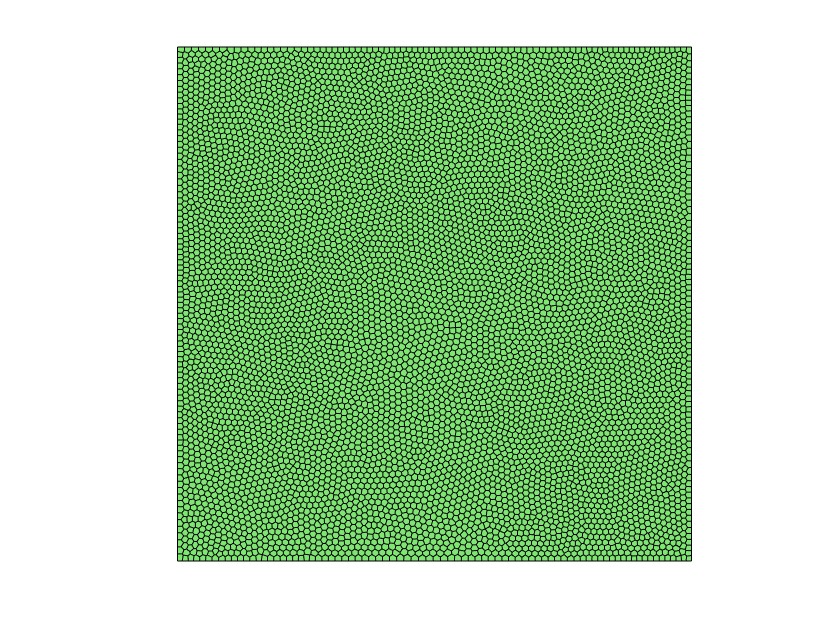}}
	\subfigure[]{\includegraphics[scale=0.2]{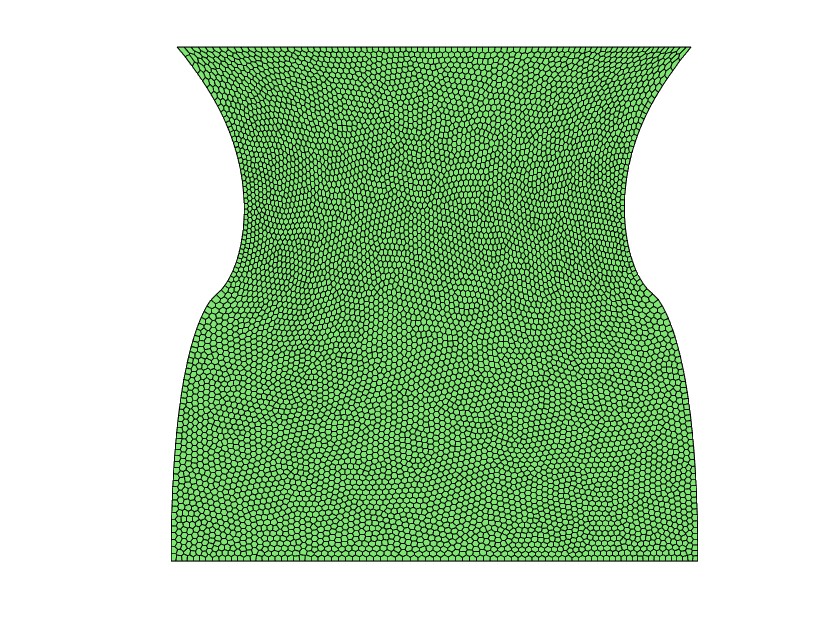}}\\
\caption{Example \ref{nexample1}: (a) Initial mesh with $N=8000$; (b) deformed meshes with $N=8000$}\label{fig12}
\end{figure}

\vskip 0.2cm

\begin{figure}[!htb]
	\centering
	\subfigure[]{\includegraphics[scale=0.2]{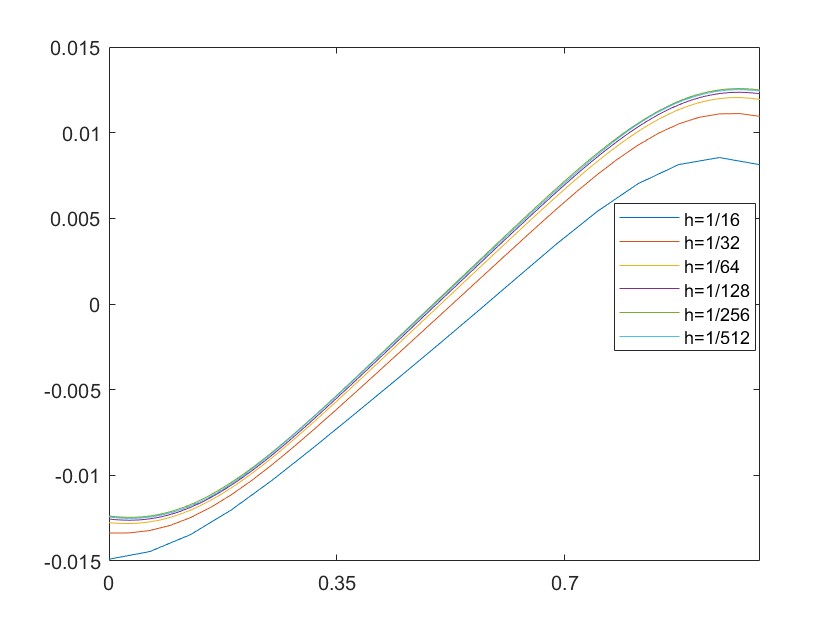}}
	\subfigure[]{\includegraphics[scale=0.2]{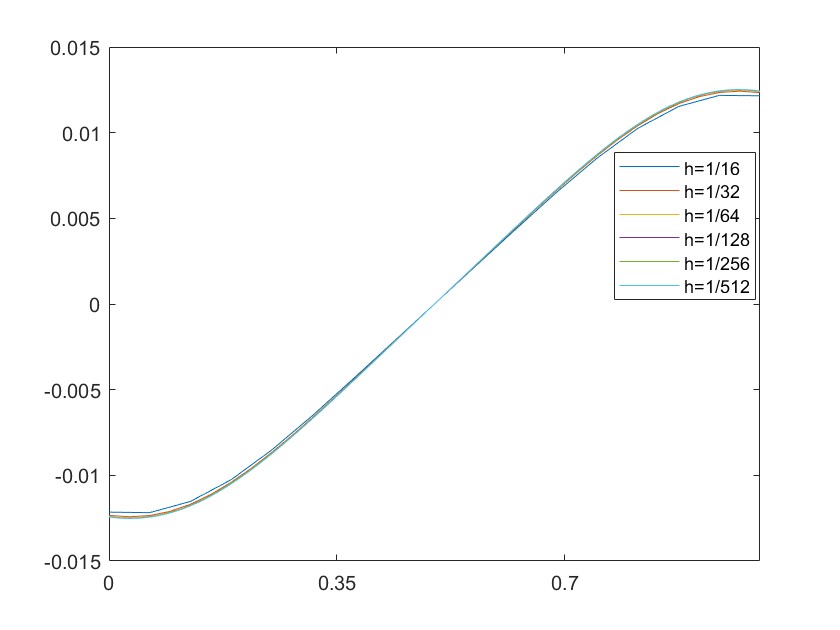}}\\
	\caption{Example \ref{nexample1}: Tangential displacement on $\Gamma_3$ for (a) FEM solution;
	(b) VEM solution on square mesh}\label{fig14}
\end{figure}

\begin{table}[htp]
	\begin{center}
	\caption{Example \ref{nexample1}: Relative errors of the displacements for the linear FEM}\label{tab1}
		\begin{tabular}{|c|c|c|c|c|c|} \hline
			$h$ &1/8 &1/16 &1/32 &1/64 &1/128  \\ \hline
			error &$20.51\%$&$11.47\%$&$6.53\%$&$3.7\%$&$1.96\%$\\ \hline
			order & -& 0.8385 &0.8127 &0.8196 &0.9167\\ \hline
		\end{tabular}
	\end{center}
\end{table}
\begin{table}[htp]
	\begin{center}
		\caption{Example \ref{nexample1}: Relative errors of the displacements on the square mesh for 
		the VEM}\label{tab1-2}
		\begin{tabular}{|c|c|c|c|c|c|} \hline
		$h$ &1/8 &1/16 &1/32 &1/64 &1/128  \\ \hline
			error &$10.06\%$&$5.84\%$&$3.39\%$&$1.94\%$&$1.07\%$\\ \hline
			order & -& 0.7846 &0.7847 &0.8052 &0.8584  \\ \hline
		\end{tabular}
	\end{center}
\end{table}

\begin{figure}[!htb]
	\centering
	\subfigure[]{\includegraphics[scale=0.2]{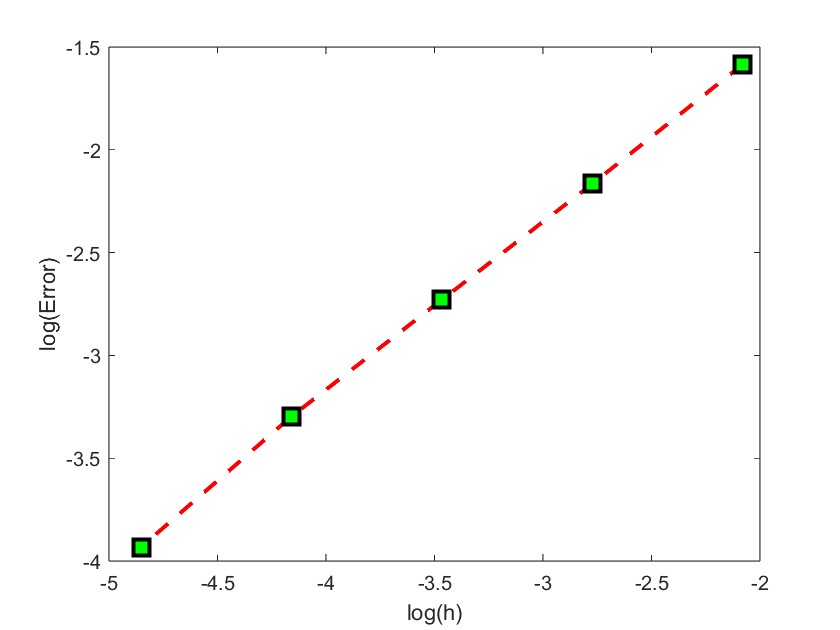}}
	\subfigure[]{\includegraphics[scale=0.2]{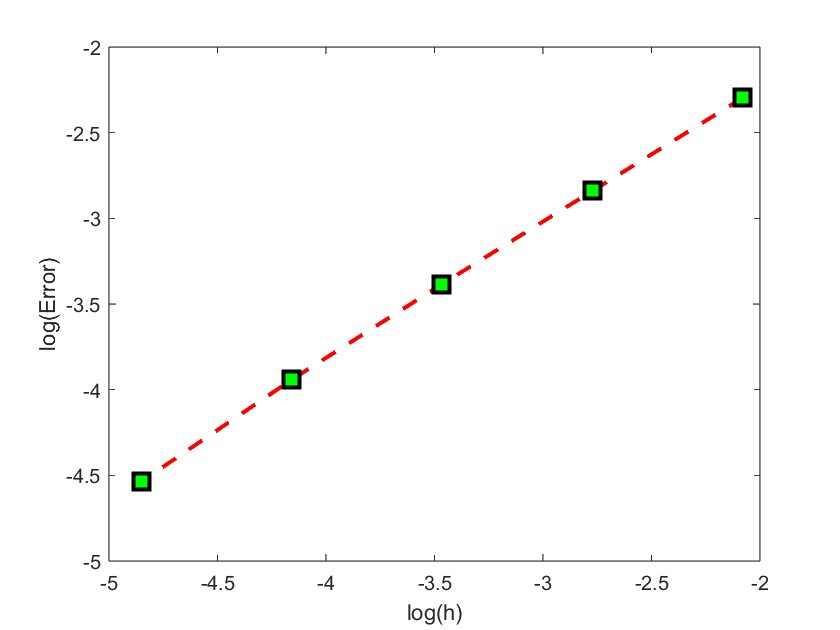}}\\
	\caption{Example \ref{nexample1}: Numerical convergence orders for (a) FEM; (b) VEM on the square mesh}\label{fig15}
\end{figure}

\begin{example} \label{nexample2}
{\rm In this example, we consider a frictionless normal compliance contact problem. On $\Gamma_C$, let
\begin{align*}
&-\sigma_\nu =
\begin{cases}
0\quad \text{if } u_\nu < 0,\\
[0,2]\quad \text{if } u_\nu =0,\\
2\quad \text{if } u_\nu \in (0,0.04],\\
4-50u_{\nu}\quad \text{if } u_\nu \in  (0.04,0.06],\\
20u_{\nu}-0.2\quad \text{if } u_\nu > 0.06,
\end{cases}\\
&\bsigma_\tau = \bzero.
\end{align*}
	The parameters are given as follows:
	\begin{align*}
		&E = 2000\,kg/cm^2,\quad \kappa = 0.3,\\
		&\fb_0 = \left(0,-0.05\right)\,kg/cm^2,\\
		&\fb_2 =
		\left\{
		\begin{array}{cc}
			\left(800,0\right)\,kg/cm& \text{ on } \{0\}\times [0.5,1),\\
			\left(-800,0\right)\,kg/cm& \text{ on } \{1\}\times [0.5,1).\\
		\end{array}
		\right.
	\end{align*}
}
\end{example}

In the VEM,we present the numerical solution on square mesh for different values of mesh numbers $N$ in 
Figure \ref{fig21}. In Figure \ref{fig22},  we present the initial and deformed meshes on voronoi meshes
corresponding to $N=8000$ for the VEM. The numerical solution obtained by linear FEM and lowest order VEM 
on the square grid  along the normal direction on the boundary $[0,1] \times \{0\}$ is shown in Figure \ref{fig24}. 
The relative errors and numerical convergence orders are reported in Table \ref{tab2}, Table \ref{tab2-2} 
and Figure \ref{fig2d}.  \hfill$\Box$

\begin{figure}[htb]
	\setlength{\abovecaptionskip}{0.cm}
	\setlength{\belowcaptionskip}{-0.cm}
	\centering
	\begin{minipage}[t]{0.5\linewidth}
		\centering
		\includegraphics[height=3.5cm,width=5cm]{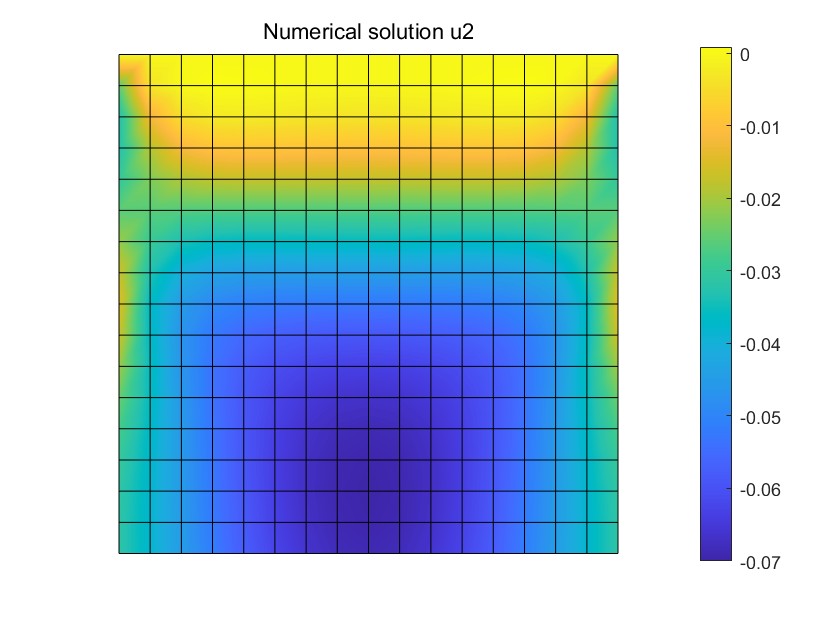}
	\end{minipage}%
	\begin{minipage}[t]{0.6\linewidth}
		\centering
		\includegraphics[height=3.5cm,width=5cm]{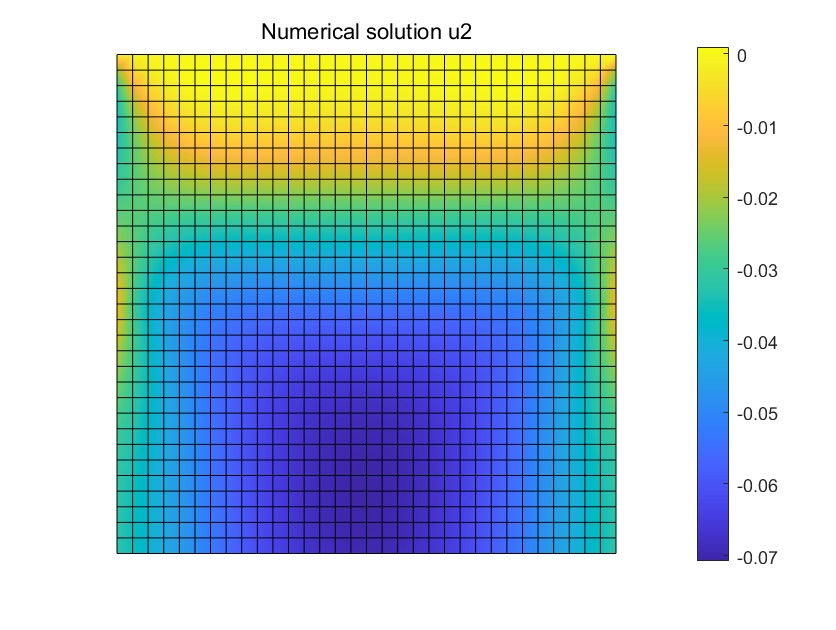}
	\end{minipage}
	\begin{minipage}[t]{0.5\linewidth}
		\centering
		\includegraphics[height=3.5cm,width=5cm]{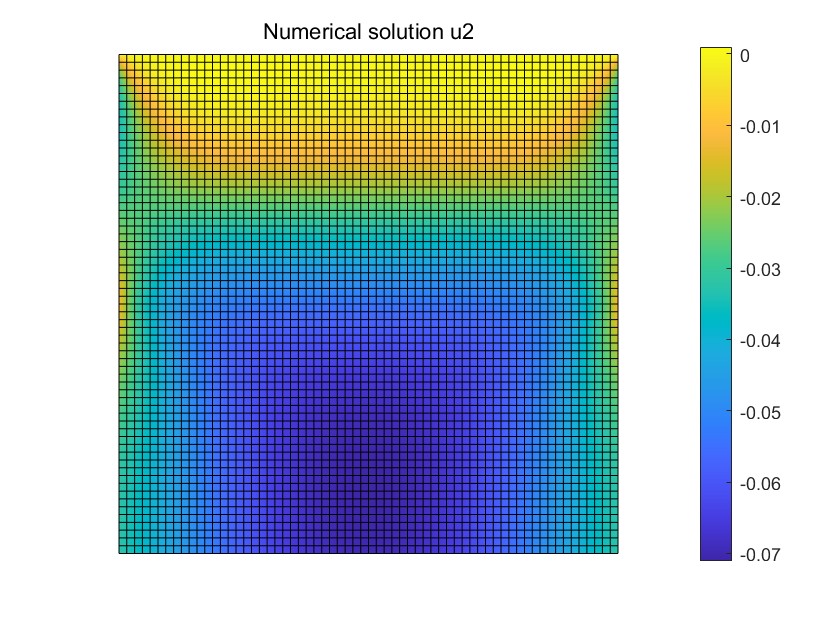}
	\end{minipage}%
	\begin{minipage}[t]{0.6\linewidth}
		\centering
		\includegraphics[height=3.5cm,width=5cm]{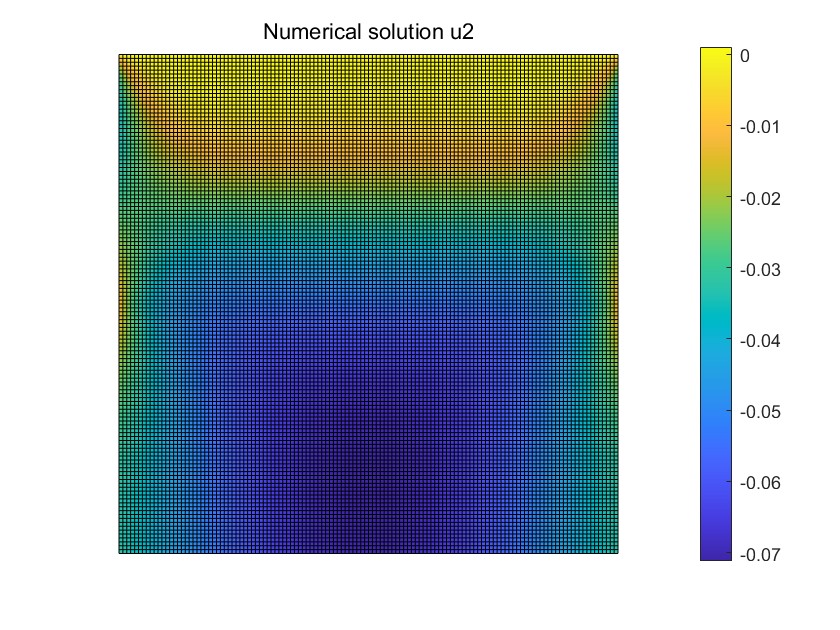}
	\end{minipage}
	\caption{Example \ref{nexample2}: Numerical solutions with $N$ elements: $N=300$ (upper left),
		$N=1000$ (upper right), $N=4000$ (bottom left) and $N=8000$ (bottom right).}
	\label{fig21}
\end{figure}

\begin{figure}[!htb]
	\centering
	\subfigure[]{\includegraphics[scale=0.2]{Ex1_initial_mesh.jpg}}
	\subfigure[]{\includegraphics[scale=0.2]{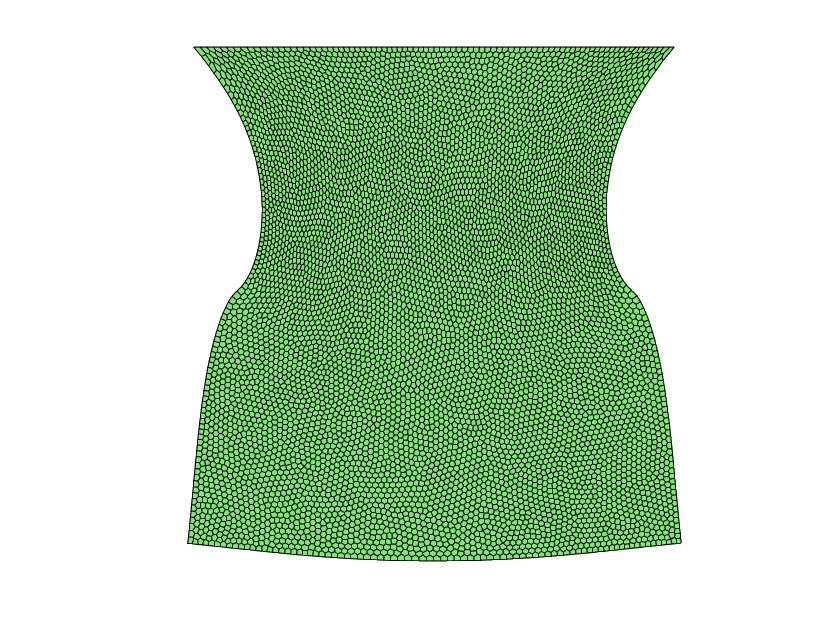}}\\
\caption{Example \ref{nexample2}: (a) Initial mesh with $N=8000$; (b) deformed meshes with $N=8000$}\label{fig22}
\end{figure}

\vskip 0.2cm
\begin{figure}[!htb]
	\centering
	\subfigure[]{\includegraphics[scale=0.2]{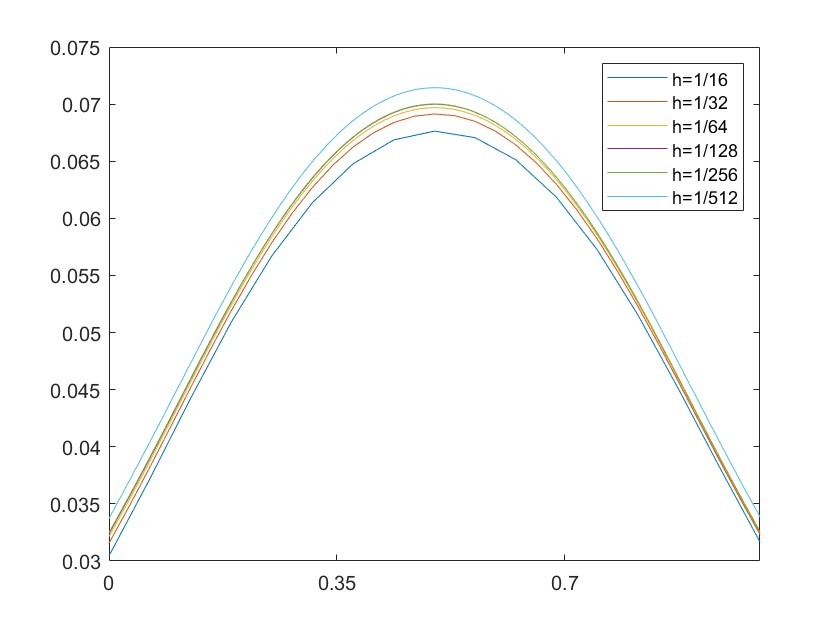}}
	\subfigure[]{\includegraphics[scale=0.2]{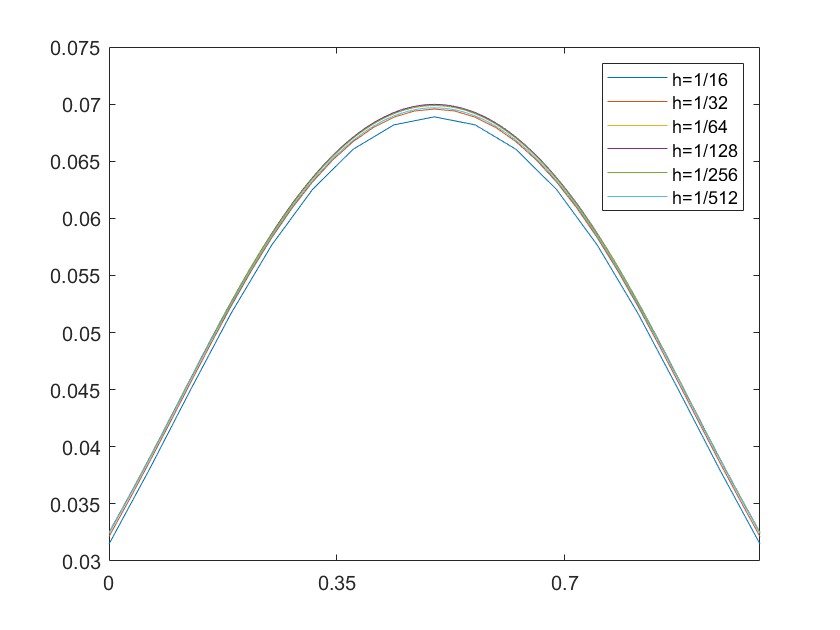}}\\
	\caption{Example \ref{nexample2}: Normal displacement on $\Gamma_3$ for (a) FEM; (b) VEM on square mesh}\label{fig24}
\end{figure}

\begin{table}[htp]
	\begin{center}
		\caption{Example \ref{nexample2}: Relative errors of the displacements for FEM}\label{tab2}
		\begin{tabular}{|c|c|c|c|c|c|c|} \hline
			$h$ &1/8 &1/16 &1/32 &1/64 &1/128\\ \hline
			error &$20.54\%$&$11.62\%$&$6.68\%$&$3.85\%$&$2.12\%$\\ \hline
			order & -& 0.8218 &0.7987 &0.7950 &0.8608\\ \hline
		\end{tabular}
	\end{center}
\end{table}

\begin{table}[htp]
	\begin{center}
\caption{Example \ref{nexample2}: Relative errors of the displacements on the square mesh for VEM}\label{tab2-2}
		\begin{tabular}{|c|c|c|c|c|c|c|} \hline
			$h$ &1/8 &1/16 &1/32 &1/64 &1/128\\ \hline
			error &$10.10\%$&$5.9\%$&$3.44\%$&$1.98\%$&$1.11\%$\\ \hline
			order & -& 0.7756 &0.7783 &0.7969 &0.8480\\ \hline
		\end{tabular}
	\end{center}
\end{table}

\begin{figure}[!htb]
	\centering
	\subfigure[]{\includegraphics[scale=0.2]{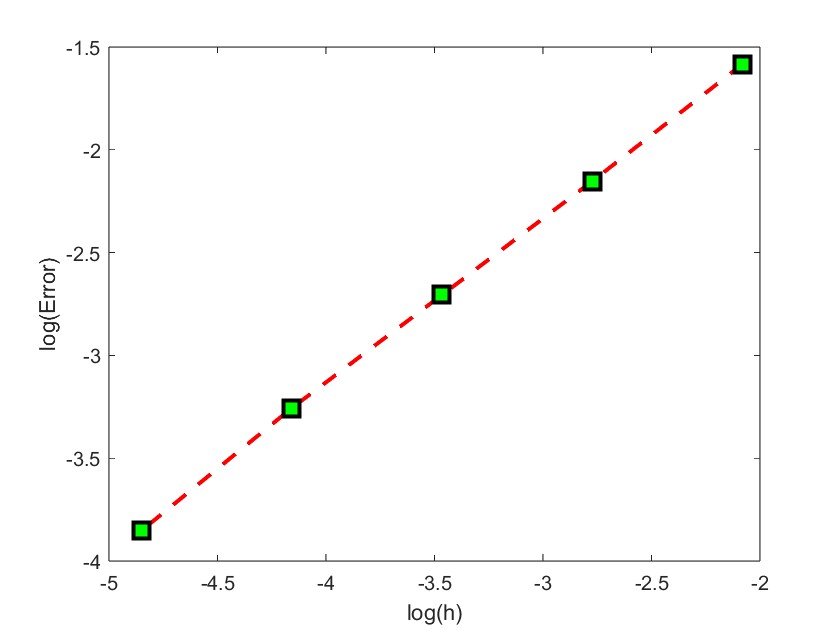}}
	\subfigure[]{\includegraphics[scale=0.2]{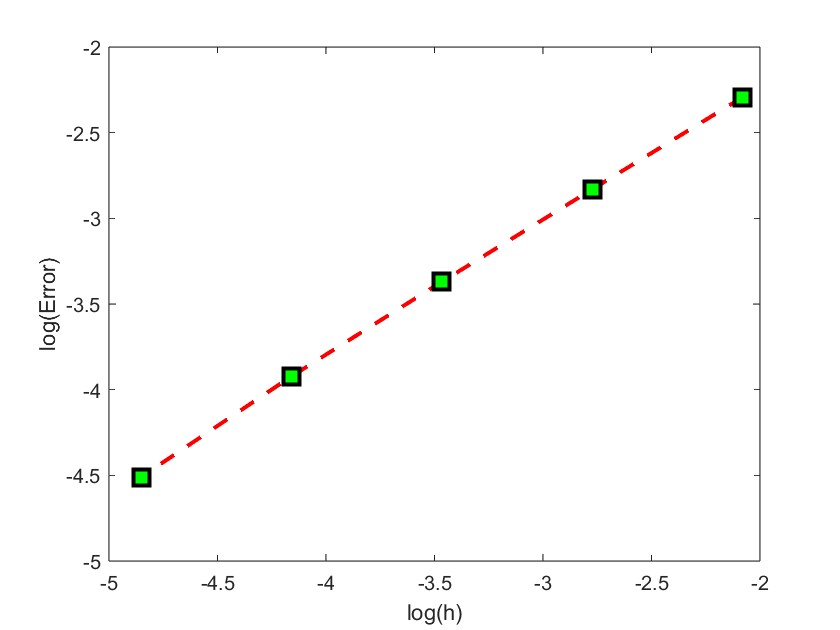}}\\
\caption{Example \ref{nexample2}: Numerical convergence orders for (a) FEM; (b) VEM on the square mesh}\label{fig2d}
\end{figure}

\begin{example} \label{nexample3}
{\rm The contact boundary conditions on $\Gamma_C$ are characterized by a frictionless multivalued normal compliance
contact	in which the penetration is restricted by unilateral constraint. For simulations, we let
\begin{align*}
& u_\nu \leq g, \quad \sigma_\nu+\xi_\nu \leq 0, \quad\left(u_\nu-g\right)\left(\sigma_\nu+\xi_\nu\right)=0\\
		&\xi_\nu =
			\begin{cases}
			0\quad \text{if } u_\nu < 0,\\
			[0,2]\quad \text{if } u_\nu =0,\\
			2\quad \text{if } u_\nu \in (0,0.04],\\
			4-50u_{\nu}\quad \text{if } u_\nu \in  (0.04,0.06],\\
			20u_{\nu}-0.2\quad \text{if } u_\nu > 0.06,
		\end{cases}\\
		&\bsigma_\tau = \bzero.
	\end{align*}
	
This time, we choose $g=0.06$.

}
\end{example}

In the VEM, we present the numerical solution on square mesh for different values of mesh numbers $N$ 
in Figure \ref{fig31}. In Figure \ref{fig32}, we present the initial and deformed meshes on Voronoi meshes 
corresponding to $N=8000$ for the VEM. The numerical solution obtained by linear FEM and lowest order VEM 
on the square grid along the normal direction on the boundary $[0,1] \times \{0\}$ is shown in 
Figure \ref{fig34}.  The relative errors and numerical convergence orders are reported in Table \ref{tab3}, 
Table \ref{tab3-2} and Figure \ref{fig35}.  \hfill$\Box$

\begin{figure}[htb]

	\setlength{\abovecaptionskip}{0.cm}
	\setlength{\belowcaptionskip}{-0.cm}
	\centering
	\begin{minipage}[t]{0.5\linewidth}
		\centering
		\includegraphics[height=3.5cm,width=5cm]{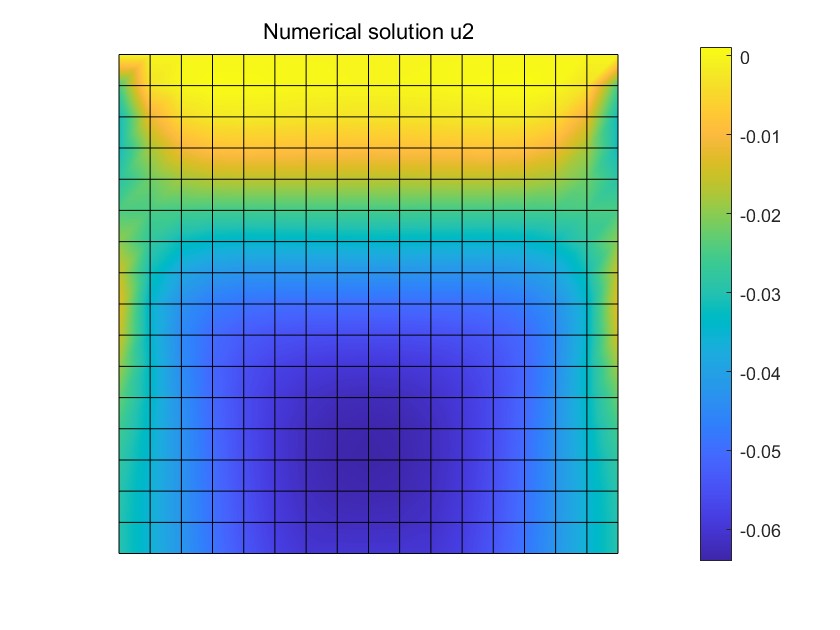}
	\end{minipage}%
	\begin{minipage}[t]{0.6\linewidth}
		\centering
		\includegraphics[height=3.5cm,width=5cm]{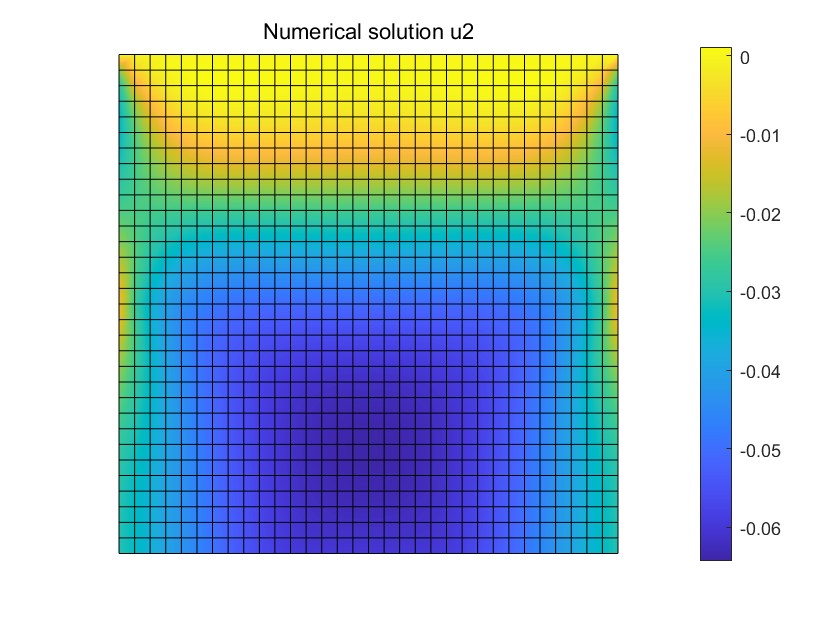}
	\end{minipage}
	\begin{minipage}[t]{0.5\linewidth}
		\centering
		\includegraphics[height=3.5cm,width=5cm]{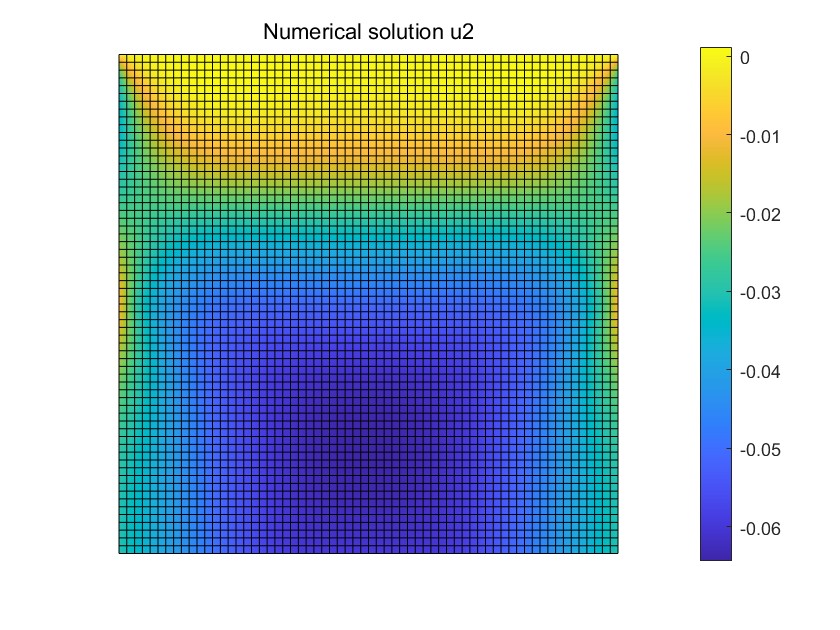}
	\end{minipage}%
	\begin{minipage}[t]{0.6\linewidth}
		\centering
		\includegraphics[height=3.5cm,width=5cm]{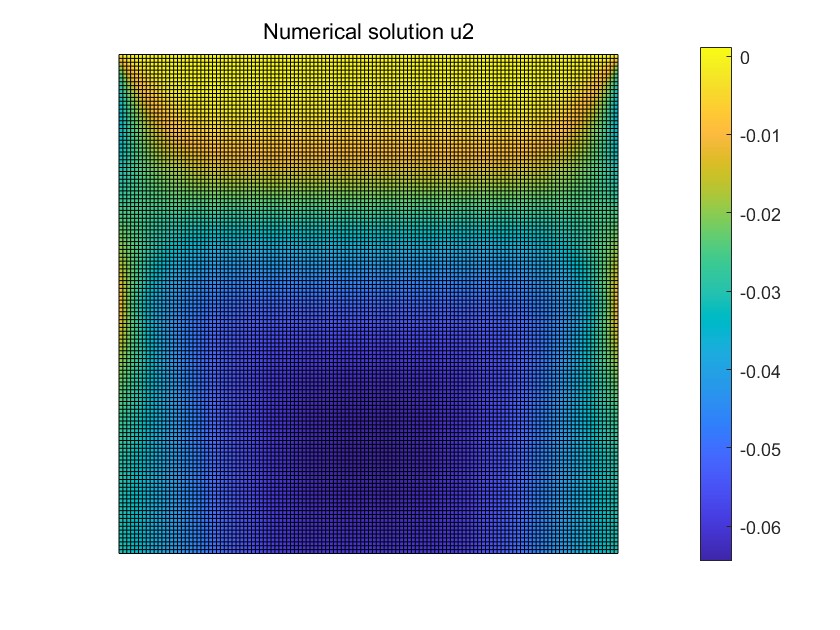}
	\end{minipage}
	\caption{Example \ref{nexample3}: Numerical solutions with $N$ elements: $N=256$ (upper left),
		$N=1024$ (upper right), $N=4096$ (bottom left) and $N=16384$ (bottom right).}
	\label{fig31}
\end{figure}

\begin{figure}[!htb]
	\centering
	\subfigure[]{\includegraphics[scale=0.2]{Ex1_initial_mesh.jpg}}
	\subfigure[]{\includegraphics[scale=0.2]{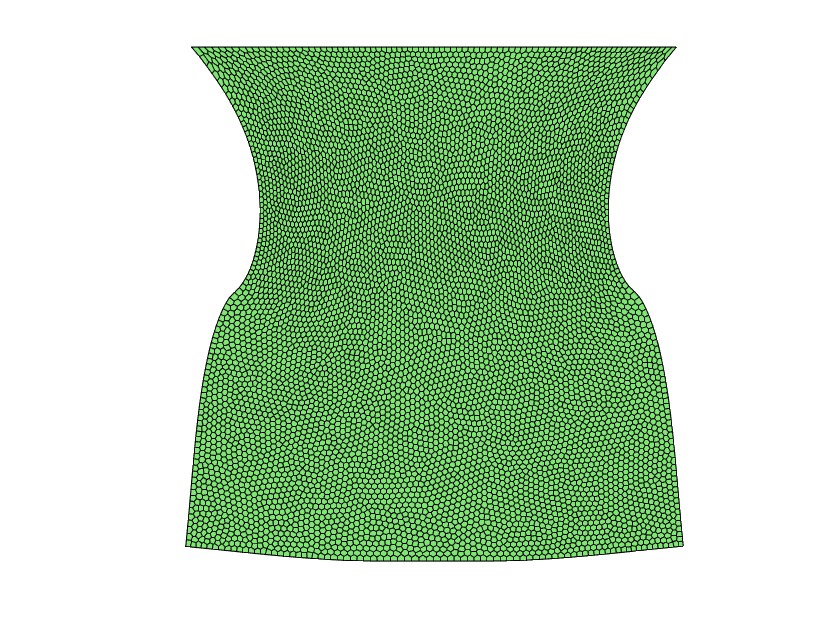}}\\
\caption{Example \ref{nexample3}: (a) Initial mesh with $N=8000$; (b) deformed meshes with $N=8000$}.\label{fig32}
\end{figure}

\begin{figure}[!htb]
	\centering
	\subfigure[]{\includegraphics[scale=0.2]{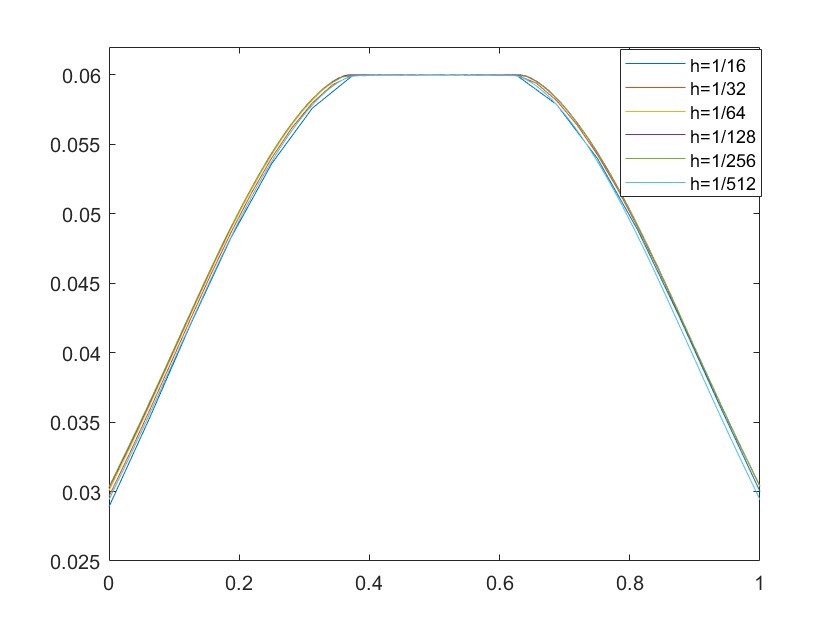}}
	\subfigure[]{\includegraphics[scale=0.2]{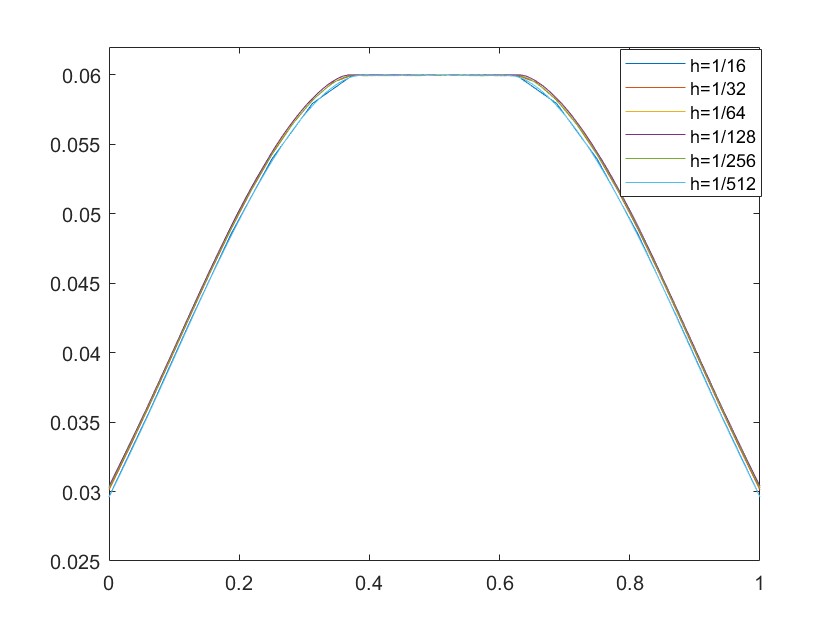}}\\
	\caption{Example \ref{nexample3}: Normal displacement on $\Gamma_3$ for (a) FEM; 
	(b) VEM on square mesh}\label{fig34}
\end{figure}

\begin{table}[htp]
	\begin{center}
		\caption{Example \ref{nexample3}: relative errors of the displacements for FEM}\label{tab3}
		\begin{tabular}{|c|c|c|c|c|c|c|} \hline
			$h$ &1/8 &1/16 &1/32 &1/64 &1/128  \\ \hline
			error &$20.43\%$&$11.57\%$&$6.63\%$&$3.79\%$&$2.04\%$\\ \hline
			order & -& 0.8203  &0.8033 &0.8068 &0.8936\\ \hline
		\end{tabular}
	\end{center}
\end{table}
\begin{table}[htp]
	\begin{center}
\caption{Example \ref{nexample3}: Relative errors of the displacements on the square mesh for VEM}\label{tab3-2}
		\begin{tabular}{|c|c|c|c|c|c|c|} \hline
			$h$ &1/8 &1/16 &1/32 &1/64 &1/128  \\ \hline
			error &$10.09\%$&$5.91\%$&$3.45\%$&$2.01\%$&$1.15\%$\\ \hline
			order & -& 0.7717 &0.7766 &0.7794 &0.8056\\ \hline
		\end{tabular}
	\end{center}
\end{table}

\begin{figure}[!htb]
	\centering
	\subfigure[]{\includegraphics[scale=0.2]{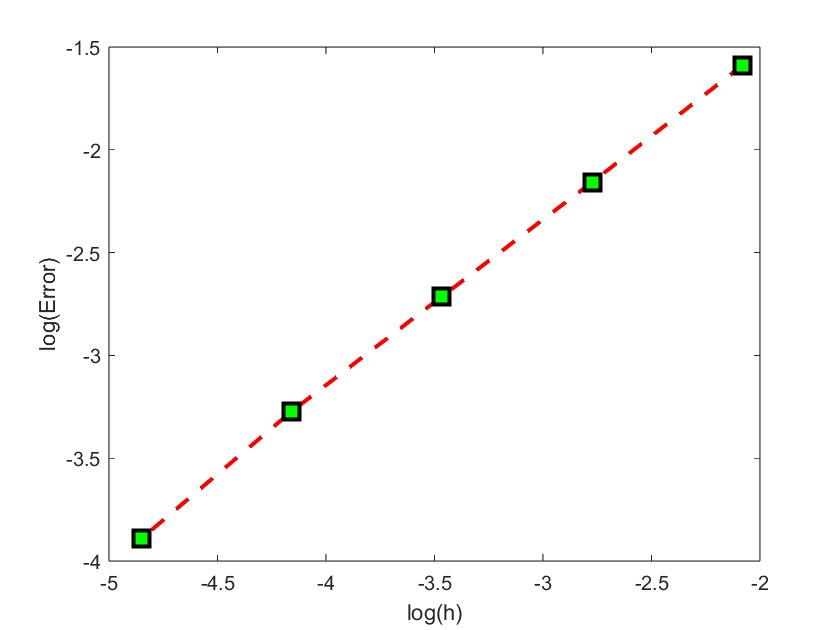}}
	\subfigure[]{\includegraphics[scale=0.2]{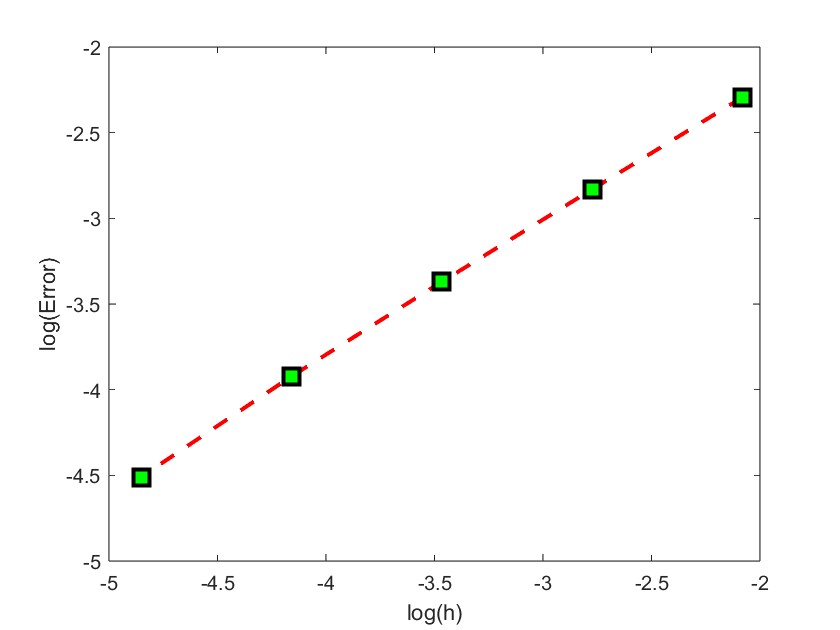}}\\
	\caption{Example \ref{nexample3}: Numerical convergence orders of (a) FEM solutions; 
	(b) VEM solutions on the square mesh}\label{fig35}
\end{figure}

\section{Concluding remarks}\label{sec:final}

This paper is devoted to numerical analysis of variational-hemivariational inequalities, especially those arising
in contact mechanics. Abstract frameworks are presented for the finite element method and the virtual element method
to solve the variational-hemivariational inequalities, and the results are applied to the numerical solution of
three representative contact problems.  In particular, a general convergence result is shown for Galerkin solutions 
of abstract variational-hemivariational inequalities under minimal solution regularity conditions available from 
the well-posedness theory, and optimal order error estimates are derived for the lowest order (linear) 
finite element solutions and virtual element solutions under certain solution regularity assumptions. 
Numerical examples are reported on the performance of both the finite element method and the 
virtual element method.  

Other numerical methods can be employed to solve the contact problems as well.  For instance, similar to the 
virtual element method, a polytopal method, called hybrid high-order method (HHO), has been applied to solve 
contact problems, cf.\ \cite{CCE20, CEP20, BKRWWW22}.  It will be interesting to study HHO to solve 
general variational-hemivariational inequalities.

For practical use of numerical methods, one important issue is the assessment of the reliability of numerical 
solutions, which is accomplished by a posteriori error estimates of numerical solution errors after the 
numerical solutions are found.  The interest in a posteriori error estimation for the finite element method 
began in the late 1970s (\cite{BR78a, BR78b}). Since then, a large number of papers and 
books have been published on this subject.  Historically, two of the influential books on
a posteriori error analysis are \cite{Ver1996, AO2000}.  Note that most of the publications 
on a posteriori error analysis deal with variational equation problems.  In \cite{Han2005}, 
a systematic approach was developed for a posteriori error analysis and adaptive solutions of 
variational inequalities, by employing the duality theory in convex analysis (\cite{ET1976}).  
Another approach was employed in deriving a posteriori 
error estimators for variational inequalities of the second kind in \cite{wang2013another}. 
Similar approaches were extended to perform a posteriori error analysis in the virtual element method 
for simplified friction problems. Specifically, a residual-based error estimator for VEM was proposed 
in \cite{deng2019posteriori}, while a gradient recovery-type a posteriori error estimator was introduced 
in \cite{wei2023gradient}.  In \cite{PS2025}, a posteriori error analysis of the elliptic obstacle problem 
was addressed using hybrid high-order methods.  A posteriori error analysis for $C^0$ interior penalty methods 
was performed for a fourth-order variational inequality of the second kind in \cite{GP2016} and that 
for the obstacle problem of clamped Kirchhoff plates in \cite{BGSZ17}.
It will be an interesting and important topic to establish a posteriori error estimates for numerical 
solutions of variational-hemivariational inequalities, and to apply the a posteriori error estimates 
to develop adaptive algorithms to solve contact problems in the form of variational-hemivariational inequalities.


\begin{thebibliography}{99}

\bibitem{AF2003}
R. A. Adams and J. J. F. Fournier, \emph{Sobolev Spaces}, second edition, Academic Press, New York, 2003.

\bibitem{AO2000} 
M. Ainsworth and J.T. Oden, \emph{A Posteriori Error Estimation in Finite Element Analysis}, Wiley, 2000.

\bibitem{AHABW20}
F. Aldakheel, B. Hudobivnik, E. Artioli, L. Beir\~ao da Veiga, and P. Wriggers, Curvilinear virtual elements
for contact mechanics, \emph{Comput.\ Methods Appl.\ Mech.\ Engrg.} {\bf 372} (2020), paper no.\ 113394.

\bibitem{artioli2017arbitrary}
E. Artioli, L. Beir\~ao da Veiga, C. Lovadina, and E. Sacco,
Arbitrary order 2D virtual elements for polygonal meshes: part I, elastic problem,
\emph{Comput.\ Mech.} {\bf 60} (2017), 355--377.

\bibitem{AH2009}
K. Atkinson and W. Han, \emph{Theoretical Numerical Analysis: A Functional Analysis Framework},
third edition, Springer, New York, 2009.

\bibitem{BR78a} 
I. Babu\v{s}ka and W.C. Rheinboldt, Error estimates for adaptive finite element computations, 
\emph{SIAM J. Numer.\ Anal.} {\bf 15} (1978), 736--754.

\bibitem{BR78b} 
I. Babu\v{s}ka and W.C. Rheinboldt, A posteriori error estimates for the finite element method,
\emph{Intern.\ J. Numer.\ Methods Engrg.} {\bf 12} (1978), 1597--1615.

\bibitem{BKRWWW22} 
H. R. Bayat, J. Kr\"{a}mer, S. Reese, C. Wieners, B. Wohlmuth, and L. Wunderlich, Hybrid discretizations in 
solid mechanics for non-linear and non-smooth problems, in \emph{Lect.\ Notes Appl.\ Comput.\ Mech.} 
{\bf 98} (2022), \emph{Non-standard Discretisation Methods in Solid Mechanics}, pp.\ 1--35, Springer.

\bibitem{beirao2013basic}
L. Beir\~ao da Veiga, F. Brezzi, A. Cangiani, G. Manzini, L. D. Marini, and A. Russo, Basic
principles of virtual element methods, \emph{Math.\ Models Methods Appl.\ Sci.} {\bf 23} (2013), 119--214.

\bibitem{beirao2013virtual}
L. Beir\~ao da Veiga, F. Brezzi, and L. D. Marini, Virtual elements for linear elasticity problems,
\emph{SIAM J. Numer.\ Anal.} {\bf 51} (2013), 794--812.

\bibitem{beirao2016stability}
L. Beir\~ao da Veiga, C. Lovadina, and A. Russo, Stability analysis for the virtual element method,\emph{Math.\ Models Methods Appl.\ Sci.} {\bf 27} (2017), 2557--2594.

\bibitem{BGSZ17}
S.C. Brenner, J. Gedicke, L. Y. Sung, and Y. Zhang, An a posteriori analysis of $C^0$ interior 
penalty methods for the obstacle problem of clamped Kirchhoff plates, 
\emph{SIAM J. Numer.\ Anal.} {\bf 55} (2017), 87--108.  


\bibitem{BS2008}
S. C. Brenner and L. R. Scott, \emph{The Mathematical Theory of Finite Element Methods}, third edition,
Springer-Verlag, New York, 2008.

\bibitem{Br72}
H. Br\'ezis, Probl\`emes unilat\'eraux, {\em J. Math.\ Pures et Appl.} {\bf 51} (1972), 1--168.

\bibitem{Bre2011}
H. Br\'ezis, \emph{Functional Analysis, Sobolev Spaces and Partial Differential Equations}, Springer, New York, 2011.

\bibitem{CL2021}
S. Carl and V. K. Le, \emph{Multi-Valued Variational Inequalities and Inclusions}, Springer, New York, 2021.

\bibitem{CLM2007}
S. Carl, V. K. Le, and D. Motreanu, \emph{Nonsmooth Variational Problems and Their Inequalities:
Comparison Principles and Applications}, Springer, New York, 2007.

\bibitem{CCE20}
K. L. Cascavita, F. Chouly, and A. Ern, Hybrid high-order discretizations combined with 
Nitsche's method for Dirichlet and Signorini boundary conditions, \emph{IMA J. Numer.\ Anal.}
{\bf 40} (2020), 2189--2226. 

\bibitem{CHY23}
G. Caselli, M. Hensel, and I. Yousept, Quasilinear variational inequalities in ferromagnetic shielding:
well-posedness, regularity, and optimal control, \emph{SIAM J. Control Optim.} {\bf 61} (2023), 2043--2068.

\bibitem{CSWL23}
X.-L. Cheng, X. Shen, X. Wang, and K. Liang, A deep neural network-based method for solving obstacle problems,
\emph{Nonlinear Anal.\ Real World Appl.} {\bf 72} (2023), Paper No.\ 103864.

\bibitem{CEP20}
F. Chouly, A. Ern, and N. Pignet, A hybrid high-order discretization combined with Nitsche's method
for contact and Tresca friction in small strain elasticity, \emph{SIAM J. Sci.\ Comput.} {\bf 42} (2020),  A2300--A2324. 

\bibitem{CHR2023}
F. Chouly, P. Hild, and Y. Renard, \emph{Finite Element Approximation of Contact and Friction in Elasticity},
Birkh\"{a}user/Springer, Cham, 2023.

\bibitem{Ci1978}
P. G. Ciarlet, \emph{The Finite Element Method for Elliptic Problems}, North Holland, Amsterdam, 1978.

\bibitem{CHKW22}
M. Cihan, B. Hudobivnik, J. Korelc, and P. Wriggers, A virtual element method for 3D contact problems with 
non-conforming meshes, \emph{Comput.\ Methods Appl.\ Mech.\ Engrg.} {\bf 402} (2022), paper no.\ 115385.

\bibitem{Cl75}
F. H. Clarke, Generalized gradients and applications, \emph{Trans.\ Am.\ Math.\ Soc.} {\bf 205} (1975), 247--262.

\bibitem{Cl1983}
F. H. Clarke, \emph{Optimization and Nonsmooth Analysis}, Wiley, New York, 1983.

\bibitem{Cl2013}
F. H. Clarke, \emph{Functional Analysis, Calculus of Variations and Optimal Control}, Springer, London, 2013.

\bibitem{CLSW1998}
F. H. Clarke, Y.S. Ledyaev, R. J. Stern, and P. R. Wolenski, \emph{Nonsmooth Analysis and Control Theory},
Springer, New York, 1998.

\bibitem{deng2019posteriori}
Y. Deng, F. Wang and H. Wei, A posteriori error estimates of virtual element method for a 
simplified friction problem, \emph{J. Sci.\ Comput.} {\bf 83} (2020), paper no.\ 52.

\bibitem{DMP1}
Z. Denkowski, S. Mig\'orski and N.S. Papageorgiou, \emph{An Introduction to Non\-li\-near Analysis: Theory}, 
Kluwer Academic/Plenum Publishers, Boston, Dordrecht, London, New York, 2003.

\bibitem{DMP2}
Z. Denkowski, S. Mig\'orski and N.S. Papageorgiou, \emph{An Introduction to Nonlinear Analysis: Applications},
Kluwer Academic/Plenum Publishers, Boston, Dordrecht, London, New York, 2003.

\bibitem{DL1976}
G. Duvaut and J.-L. Lions, \emph{Inequalities in Mechanics and Physics}, Springer-Verlag, Berlin, 1976.

\bibitem{ET1976}
I. Ekeland and R. Temam, \emph{Convex Analysis and Variational Problems}, North-Holland, Amsterdam, 1976.

\bibitem{Evans2010}
L. C. Evans, \emph{Partial Differential Equations}, second edition, American Mathematical Society, 2010.

\bibitem{Fa74}
R.S. Falk, Error estimates for the approximation of a class of variational inequalities,
\emph{Mathematics of Computation} {\bf 28} (1974), 963--971.

\bibitem{FHH19}
F. Feng, W. Han, and J. Huang, Virtual element method for elliptic hemivariational inequalities,
\emph{Journal of Scientific Computing} {\bf 81} (2019), 2388--2412.

\bibitem{FHH21a}
F. Feng, W. Han, and J. Huang, Virtual element method for elliptic hemivariational inequalities with
a convex constraint, \emph{Numerical Mathematics: Theory, Methods and Applications} {\bf 14} (2021), 589--612.

\bibitem{FHH21b}
F. Feng, W. Han, and J. Huang, The virtual element method for an obstacle problem of a Kirchhoff plate,
\emph{CNSNS} {\bf 103} (2021), 106008.

\bibitem{FHH22}
F. Feng, W. Han, and J. Huang, A nonconforming virtual element method for a fourth-order hemivariational inequality
in Kirchhoff plate problem, \emph{Journal of Scientific Computing} {\bf 90} (2022), paper no.\ 89.

\bibitem{Fi64}
G. Fichera, Problemi elastostatici con vincoli unilaterali: il problema di Signorini
con ambigue condizioni al contorno, \emph{Atti Accad.\ Naz.\ Lincei, Mem.,
Cl.\ Sci.\ Fis.\ Mat.\ Nat., Sez.\ I, VIII.\ Ser.} {\bf 7} (1964), 91--140.

\bibitem{GP2005}
L. Gasi\'{n}ski and N. S. Papageorgiou, \emph{Nonsmooth Critical Point Theory and Nonlinear Boundary Value
Problems}, Chapman \& Hall/CRC, Boca Raton, 2005. 

\bibitem{Gl1984}
R. Glowinski, \emph{Numerical Methods for Nonlinear Variational Problems}, Springer-Verlag, New York, 1984.

\bibitem{GLT1981}
R. Glowinski, J.-L. Lions, and R. Tr\'{e}moli\`{e}res, \emph{Numerical Analysis of Variational Inequalities},
North-Holland, Amsterdam, 1981.

\bibitem{GM2003}
D. Goeleven and D. Motreanu, \emph{Variational and Hemivariational Inequalities: Theory, Methods and
Applications. Vol.\ I. Unilateral Analysis and Unilateral Mechanics}, Kluwer Academic Publishers, Boston, MA, 2003.

\bibitem{GMDR2003}
D. Goeleven, D. Motreanu, Y. Dumont, and M. Rochdi, \emph{Variational and Hemivariational Inequalities: Theory,
Methods and Applications. Vol.\ II. Unilateral Problems}, Kluwer Academic Publishers, Boston, MA, 2003.

\bibitem{GP2016}
T. Gudi and K. Porwal, A $C^0$ interior penalty method for a fourth-order variational inequality
of the second kind, \emph{Numer. Methods Partial Differential Equations} {\bf 32} (2016), 36--59.

\bibitem{Han2005}
W. Han, \emph{A Posteriori Error Analysis via Duality Theory, with Applications in Modeling and
Numerical Approximations}, Springer, 2005.

\bibitem{Han18}
W. Han, Numerical analysis of stationary variational-hemivariational inequalities with applications
in contact mechanics, \emph{Mathematics and Mechanics of Solids} {\bf 23} (2018), 279--293.

\bibitem{Han20}
W. Han, Minimization principles for elliptic hemivariational inequalities,
\emph{Nonlinear Analysis: Real World Applications} {\bf 54} (2020), article \#\,103114.

\bibitem{Han21}
W. Han, A revisit of elliptic variational-hemivariational inequalities,
\emph{Numerical Functional Analysis and Optimization} {\bf 42} (2021), 371--395.

\bibitem{Han2024}
W. Han, \emph{An Introduction to Theory and Applications of Stationary Variational-Hemivariational Inequalities},
Springer, New York, 2024.

\bibitem{HMS14}
W. Han, S. Mig\'orski and M. Sofonea, A class of variational-hemivariational inequalities with applications
to frictional contact problems, \emph{SIAM Journal on Mathematical Analysis} {\bf 46} (2014), 3891--3912.

\bibitem{HR2013}
W. Han and B.D. Reddy, \emph{Plasticity: Mathematical Theory and Numerical Analysis}, second edition, Springer-Verlag, 2013.

\bibitem{HS2002}
W. Han and M. Sofonea, \emph{Quasistatic Contact Problems in Viscoelasticity and Viscoplasticity},
American Mathematical Society and International Press, 2002.

\bibitem{HS19AN}
W. Han and M. Sofonea, Numerical analysis of hemivariational inequalities in contact mechanics,
\emph{Acta Numerica} {\bf 28} (2019), 175--286.

\bibitem{HSB17}
W. Han, M. Sofonea, and M. Barboteu,  Numerical analysis of elliptic hemivariational inequalities,
\emph{SIAM J. Numer.\ Anal.} {\bf 55} (2017), 640--663.

\bibitem{HSD18}
W. Han, M. Sofonea, and D. Danan, Numerical analysis of stationary variational-hemivariational
inequalities, \emph{Numer.\ Math.} {\bf 139} (2018), 563--592.

\bibitem{HZ19}
W. Han and S. Zeng, On convergence of numerical methods for variational-hemivariational inequalities
under minimal solution regularity, \emph{Applied Mathematics Letters} {\bf 93} (2019), 105--110.

\bibitem{HS66}
P. Hartman and G. Stampacchia, On some nonlinear elliptic differential functional equations,
\emph{Acta Math.} {\bf 15} (1966), 271--310.

\bibitem{HHN96}
J. Haslinger, I. Hlav\'{a}\v{c}ek,  J. Ne\v{c}as, Numerical methods for unilateral problems in solid mechanics,
in \emph{Handbook of Numerical Analysis, Vol.\ IV},  P.G. Ciarlet and J.L. Lions, eds.,
North-Holland, Amsterdam, 1996, 313--485.

\bibitem{HMP1999}
J. Haslinger, M. Miettinen and P. D. Panagiotopoulos, \emph{Finite Element Method for Hemivariational Inequalities:
Theory, Methods and Applications}, Kluwer Academic Publishers, Boston, Dordrecht, London, 1999.

\bibitem{HHNL1988} I. Hlav\'{a}\v{c}ek, J. Haslinger, J. Ne\v{c}as, and J. Lov\'{\i}\v{s}ek,
\emph{Solution of Variational Inequalities in Mechanics}, Springer-Verlag, New York, 1988.

\bibitem{HWW22}
J. Huang, C. Wang, and H. Wang, A deep learning method for elliptic hemivariational inequalities,
\emph{East Asian J. Appl.\ Math.} {\bf 12} (2022), 487--502.

\bibitem{JA2023}
A. Jayswal and T. Antczak (eds.), \emph{Continuous Optimization and Variational Inequalities},
CRC Press, Boca Raton, 2023.

\bibitem{KO1988} N. Kikuchi and J.T. Oden, \emph{Contact Problems in Elasticity:
A Study of Variational Inequalities and Finite Element Methods}, SIAM, Philadelphia, 1988.

\bibitem{LWH20}
M. Ling, F. Wang, and W. Han, The nonconforming virtual element method for a stationary Stokes hemivariational
inequality with slip boundary condition, \emph{Journal of Scientific Computing} {\bf 85} (2020), paper no.\  56.

\bibitem{LS67}
J.-L. Lions and G. Stampacchia, Variational inequalities, \emph{Comm.\ Pure Appl.\ Math.} {\bf 20} (1967), 493--519.

\bibitem{MOS10}
S. Mig\'orski, A. Ochal, and M. Sofonea, Variational analysis of fully coupled electro-elastic frictional 
contact problems, \emph{Math.\ Nachr.} {\bf 283} (2010), 1314--1335.

\bibitem{MOS2013}
S. Mig\'orski, A. Ochal, and M. Sofonea, \emph{Nonlinear Inclusions and Hemivariational
Inequalities. Models and Analysis of Contact Problems}, Advances in Mechanics
and Mathematics 26, Springer, New York, 2013.

\bibitem{Mo68}
U. Mosco, Convergence of convex sets and of solutions of variational inequalities,
\emph{Adv.\ Math.} {\bf 3} (1968), 510--585.

\bibitem{MP1999}
D. Motreanu and P. D. Panagiotopoulos,  \emph{Minimax Theorems and Qualitative Properties of the
Solutions of Hemivariational Inequalities}, Kluwer Academic Publishers, Berlin, 1999.

\bibitem{NP1995}
Z. Naniewicz and P. D. Panagiotopoulos, \emph{Mathematical Theory of Hemivariational
Inequalities and Applications}, Dekker, New York, 1995.

\bibitem{NH1981} J. Ne\v cas and I. Hlava\v cek, \emph{Mathematical Theory of
Elastic and Elastoplastic Bodies: An Introduction}, Elsevier, Amsterdam, 1981.

\bibitem{Pa83}
P. D. Panagiotopoulos, Nonconvex energy functions, hemivariational inequalities
and substationary principles, \emph{Acta Mechanica} {\bf 42} (1983), 160--183.

\bibitem{Pa1993}
P. D. Panagiotopoulos, \emph{Hemivariational Inequalities, Applications in
Mechanics and Engineering}, Springer-Verlag, Berlin, 1993.

\bibitem{PS2025}
K. Porwal and R. Singla, A posteriori error analysis of hybrid high-order methods for 
the elliptic obstacle problem, \emph{J. Sci.\ Comp.} {\bf 102} (2025), paper no.\ 15.  


\bibitem{SM2018}
M. Sofonea and S. Mig\'orski, \emph{Variational-Hemivariational Inequalities with Applications},
CRC Press, Boca Raton, FL, 2018.

\bibitem{St64}
G. Stampacchia,  Formes bilineaires coercitives sur les ensembles convexes,
\emph{C. R. Acad.\ Sci.} {\bf 258} (1964), 4413--4416.

\bibitem{TPPM12}
C. Talischi, G.H. Paulino, A. Pereira, and I.F.M. Menezes, PolyMesher: a general-purpose mesh generator 
for polygonal elements written in Matlab, \emph{Struct.\ Multidiscip.\ Optim.} {\bf 45} (2012), 309--328.

\bibitem{UL2011}
M. Ulbrich, \emph{Semismooth Newton Methods for Variational Inequalities and Constrained Optimization
Problems in Function Spaces}, SIAM, 2011.

\bibitem{Ver1996} 
R. Verfurth, \emph{A Review of A Posteriori Error Estimation and Adaptive Mesh Refinement Techniques}, 
Wiley and Teubner, 1996.

\bibitem{wang2013another}
F. Wang and W. Han, Another view for a posteriori error estimates for variational inequalities 
of the second kind, \emph{Appl.\ Numer.\ Math.} {\bf 72} (2013), 225--233. 

\bibitem{WSW23}
F. Wang, S. Shah, and B. Wu, Discontinuous Galerkin methods for hemivariational inequalities in
contact mechanics, \emph{J. Sci.\ Comp.} {\bf 95} (2023), paper no.\ 87.

\bibitem{WWH21}
F. Wang, B. Wu, and W. Han, The virtual element method for general elliptic hemivariational inequalities,
\emph{Journal of Computational and Applied Mathematics} {\bf 389} (2021), 113330.

\bibitem{WZ21}
F. Wang and J. Zhao, Conforming and nonconforming virtual element methods for a Kirchhoff 
plate contact problem, \emph{IMA J. Numer.\ Anal.} {\bf 41} (2021), 1496-–1521. 

\bibitem{wei2023gradient}
H. Wei, Y. Deng and F. Wang, Gradient recovery type a posteriori error estimates of 
virtual element method for an elliptic variational inequality of the second kind,
\emph{Nonlinear Anal.\ Real World Appl.} {\bf 73} (2023), paper no.\ 103903. 

\bibitem{wriggers2016virtual}
P. Wriggers, W. T. Rust, B. D. Reddy, A virtual element method for contact,
\emph{Comput.\ Mech.} {\bf 58} (2016), 1039--1050.

\bibitem{WWH22}
B. Wu, F. Wang, and W. Han, Virtual element method for a frictional contact problem with normal compliance,
\emph{Communications in Nonlinear Science and Numerical Simulation} (\emph{CNSNS}) {\bf 107} (2022), 106125.

\bibitem{WWH24}
B. Wu, F. Wang, and W. Han, The virtual element method for a contact problem with wear and unilateral constraint,
\emph{Appl.\ Numer.\ Math.} {\bf 206} (2024), 29--47. 

\bibitem{XL23a}
W. Xiao and M. Ling, The virtual element method for general variational-hemivariational inequalities
with applications to contact mechanics, \emph{J. Comput.\ Appl.\ Math.} {\bf 428} (2023), paper no.\ 115152.

\bibitem{XL23b}
W. Xiao and M. Ling, A priori error estimate of virtual element method for a quasivariational-hemivariational
inequality, \emph{Commun.\ Nonlinear Sci.\ Numer.\ Simul.} {\bf 121} (2023), paper no.\ 107222.

\bibitem{XL23c}
W. Xiao and M. Ling, Virtual element method for a history-dependent variational-hemivariational inequality in
contact problems, \emph{J. Sci.\ Comput.} {\bf 96} (2023), paper no.\ 82.

\bibitem{Yo21}
I. Yousept, Maxwell quasi-variational inequalities in superconductivity,
\emph{ESAIM Math.\ Model.\ Numer.\ Anal.} {\bf 55} (2021), 1545--1568.
\end{thebibliography}
\end{document}